
\documentclass[11pt]{amsart}

\usepackage{amsthm,amsmath,amssymb,geometry}
\geometry{top=1in,left=1in,right=1in,bottom=1in}

\theoremstyle{definition} \newtheorem{cor}{Corollary}[section]
\theoremstyle{definition} \newtheorem{lem}[cor]{Lemma} 
\theoremstyle{definition} \newtheorem{prop}[cor]{Proposition} 
\theoremstyle{definition} \newtheorem{thm}[cor]{Theorem}
\theoremstyle{definition} 
\theoremstyle{definition} \newtheorem{defn}[cor]{Definition}
\theoremstyle{definition} 
\theoremstyle{definition} \newtheorem*{thma}{Theorem}
\theoremstyle{definition}  
\theoremstyle{definition} 
\theoremstyle{definition}  
\theoremstyle{definition}  
\theoremstyle{definition}  
\theoremstyle{definition} 
\theoremstyle{definition} 
\theoremstyle{definition} \newtheorem{rmk}[cor]{Remark}
\theoremstyle{definition} \newtheorem*{rmka}{Remark}
\theoremstyle{definition} 
\theoremstyle{definition} 

\newcommand{\Gwr}[1]{G \wr #1}
\newcommand{\D}{\mathrel{\mathcal D}}
\newcommand{\N}{\mathbb N}
\newcommand{\C}{\mathbb C}
\newcommand{\Cplx}{\mathcal C}
\newcommand{\Cplxinv}{\mathcal C_{\textup{inv}}}
\newcommand{\T}{\mathcal T}
\newcommand{\Tinv}{\mathcal T_{\textup{inv}}}
\newcommand{\up}{\textup}
\newcommand{\Z}{\mathbb Z}

\newcommand{\Y}{\mathcal Y}
\newcommand{\X}{\mathcal X}
\newcommand{\ran}{\textup{ran}}
\newcommand{\dom}{\textup{dom}}
\newcommand{\rk}{\textup{rk}}
\newcommand{\ld}{\lfloor}
\newcommand{\rd}{\rfloor}
\newcommand{\IRR}{\textup{IRR}}
\DeclareMathOperator{\tr}{trace}

\newcommand{\meet}{\wedge}
\newcommand{\MAX}{\textup{MAX}}
\newcommand{\Rot}{\textup{Rot}}

\author{Martin E.\ Malandro}

\begin{document}


\title{Fourier inversion for finite inverse semigroups}
\address{Department of Mathematics and Statistics, Box 2206, Sam Houston State University, Huntsville, TX 77341-2206}
\email{malandro@shsu.edu}

\begin{abstract}This paper continues the study of Fourier transforms on finite inverse semigroups, with a focus on Fourier inversion theorems and FFTs for new classes of inverse semigroups. We begin by introducing four inverse semigroup generalizations of the Fourier inversion theorem for finite groups. Next, we describe a general approach to the construction of fast inverse Fourier transforms for finite inverse semigroups complementary to an approach to FFTs given in previous work. Finally, we give fast inverse Fourier transforms for the symmetric inverse monoid and its wreath product by arbitrary finite groups, 
as well as fast Fourier and inverse Fourier transforms for the planar rook monoid, the partial cyclic shift monoid, and the partial rotation monoid. 
\end{abstract}

\keywords{
Fast Fourier transform,
inverse semigroup,
rook monoid,
wreath product,
M\"obius transform}

\subjclass[2010]{20M18, 20C40, 43A30, 68W40}

\maketitle

\section{Introduction}
\label{SecIntro}

The theory of Fourier analysis on finite groups unifies the classical discrete Fourier transform (DFT) and Yates' analysis of factorial designs \cite{Yates}. The classical DFT is the Fourier transform on $\Z_n$, the cyclic group of order $n$, while the analysis of Yates is the Fourier transform on ${\Z_2}^k$.
Fast Fourier transforms (FFTs) and fast inverse Fourier transforms (FIFTs) have been developed for a wide variety of abelian and nonabelian groups---see, e.g., \cite{Baum,Bluestein,Clausen,CooleyTukey,Maslen,DanDiameters,DanWreath}. For applications, see, e.g., \cite{Brigham,PersiBook,Persi,DanWreath,DanApps}. 

Inverse semigroups are generalizations of groups which encode partial symmetries \cite{Lawson}. Every group is an inverse semigroup, but not conversely. In \cite{RookFFT,InvSemiFFT} we extended the theory of Fourier analysis on finite groups to finite inverse semigroups and developed explicit FFTs for the symmetric inverse monoid (also called the {\em rook monoid}) $R_n$ and its wreath product by arbitrary finite groups. In \cite{RookPRD} we developed an application of Fourier analysis on the rook monoid to the analysis of partially ranked datasets. While we proved a handful of Fourier inversion theorems for $R_n$ in \cite{RookFFT}, there has not yet been a treatment of Fourier inversion for arbitrary inverse semigroups, nor has there been a treatment of fast Fourier inversion for inverse semigroups.

This paper addresses 
these issues. First, we develop a sequence of Fourier inversion formulas valid for arbitrary finite inverse semigroups. Second, we give a framework for the construction of fast Fourier inversion algorithms for finite inverse semigroups similar to the framework for inverse semigroup FFTs developed in \cite{InvSemiFFT}. Third, we show how this framework together with Maslen's algorithm for fast Fourier inversion for the symmetric group \cite{Maslen}, Rockmore's algorithm for fast Fourier inversion for symmetric group wreath products \cite{DanWreath}, and the algorithm of Bj\"orklund et al.\ for the efficient computation of the M\"obius transform on lattices with few irreducibles \cite{Bjork} combine to yield fast Fourier inversion algorithms for the rook monoid and its wreath product by arbitrary finite groups. These algorithms are complementary to the FFTs for these monoids given in \cite{Bjork,InvSemiFFT}. Finally, we give fast Fourier and inverse Fourier transforms for three inverse monoids not previously considered, namely the planar rook monoid, the partial cyclic shift monoid, and the partial rotation monoid.

Let $S$ be a finite inverse semigroup. We associate $\C$-valued functions on $S$ with elements of the semigroup algebra $\C S$ by associating the delta functions of the elements of $S$ with the elements of the natural basis $\{s\}_{s\in S}$ of $\C S$. Specifically, if $f:S\rightarrow \C$, i.e., 
\[
f=\sum_{s\in S}f(s)\delta_s, 
\]
then $f$ corresponds to the element
$
\sum_{s\in S}f(s)s \in \C S.
$

If $f\in \C S$ is expressed in terms of the natural basis, then the {\em Fourier transform of $f$} is the re-expression of $f$ in terms of a {\em Fourier basis} of $\C S$. Unlike the natural basis, Fourier bases of $\C S$ are defined by symmetry conditions on $S$ (Definition \ref{DefFourierBasis}). If $S=\Z_n$, the Fourier transform of $f\in \C \Z_n$ is the usual DFT of $f$, and the Fourier basis for $\C \Z_n$ is the usual basis of exponential functions \cite{RookFFT,InvSemiFFT,RookPRD}. 

If $f\in \C S$ is expressed with respect to a particular Fourier basis, then the {\em inverse Fourier transform of $f$} is the re-expression of $f$ in terms of the natural basis of $\C S$. As with groups, FFTs and FIFTs for inverse semigroups give rise to efficient algorithms for computing the convolution of functions $f,g\in \C S$. Naive methods for computing Fourier transforms, inverse transforms, and convolutions each require $|S|^2$ operations, where an {\em operation} is defined to be a complex multiplication and a complex addition. Faster methods have been developed for a wide variety of groups and, to a lesser extent, non-group inverse semigroups. For example, it is known that the Fourier transform of $f\in \C S$ can be computed in no more than
\begin{itemize}
	\item $O(|S| \log |S|)$ operations if $S=\Z_n$ \cite{Bluestein},
	\item $O(|S| \log |S|)$ operations if $S$ is a supersolvable group \cite{Baum},
	\item $O(|S| \log^2 |S|)$ operations if $S=S_n$, the symmetric group on $n$ objects \cite{Maslen},
	\item $O(|S| \log^4 |S|)$ operations if $S=B_n$, the hyperoctahedral group on $n$ objects \cite{DanWreath}, and
	\item $O(|S| \log^2 |S|)$ operations if $S=R_n$, the rook monoid on $n$ objects \cite{Bjork,InvSemiFFT}.
\end{itemize}
Furthermore, if $f\in \C S$ is expressed with respect to particular computationally advantageous Fourier bases, then efficient algorithms for computing the inverse Fourier transform of $f$ exist, which require no more than
\begin{itemize}
	\item $O(|S|\log |S|)$ operations if $S=\Z_n$ \cite{Bluestein},
	\item $O(|S| \log |S|)$ operations if $S$ is a supersolvable group \cite{Baum,ClausenInversion},
	\item $O(|S|\log^2 |S|)$ operations if $S=S_n$ \cite{Maslen}, and
	\item $O(|S|\log^4 |S|)$ operations if $S=B_n$ \cite{DanWreath}.
\end{itemize}

We make the distinction in complexity between Fourier transforms and their inverses for the semigroups listed above because, while the classical FFT for $\Z_n$ automatically gives rise to an equally efficient algorithm for computing the inverse Fourier transform, it is not yet known whether FFTs automatically give rise to FIFTs for finite inverse semigroups in general. Given an FFT for a group, a nearly equally-efficient algorithm for computing the inverse Fourier transform arises by considering what is essentially the ``transpose" of the FFT \cite{ClausenInversion}. The key that enables this fast transpose algorithm is the Schur relations for group representations, which are not directly applicable to non-group inverse semigroups. 
Although we are able to show a relationship between the complexities of forward and inverse Fourier transforms for inverse semigroups in Theorems \ref{ThmFFTMainResult} and \ref{ThmMainInv}, we have not been able to establish a connection between Fourier transforms and their inverses for inverse semigroups as strong as the one for groups.

What about lower bounds? If $S$ is a group and all constants involved in multiplications in the computation of the Fourier transform are restricted to size no larger than 2, then it is known that the Fourier transform of an arbitrary $f\in \C S$ requires at least $\frac{1}{4}|S| \log |S|$ operations \cite{ClausenInversion}. This bound does not hold for inverse semigroups in general. In Remark \ref{LinearCplx} we point out an infinite family of inverse semigroups $S$ for which the Fourier transform of $f\in \C S$ can be computed in $|S|$ operations. However, the semigroups in this family are all idempotent and therefore have only trivial maximal subgroups. The question then becomes: Are there any interesting inverse semigroups $S$ with nontrivial maximal subgroups whose Fourier transform is sub-$O(|S|\log|S|)$ in complexity? As a first step towards answering this question, we exhibit in Section \ref{SecCnAndRotn} two inverse semigroup generalizations $S$ of $\Z_n$ for which a key step in computing the Fourier transform (which has been $O(|S|\log |S|)$ or worse in complexity for previously-considered inverse semigroups) can be completed in $O(|S| \log \log |S|)$ operations.

The main results of this paper are as follows. Let $S$ be a finite inverse semigroup with $\D$-classes $D_0 ,\ldots, D_n$, natural partial order $\leq$, and M\"obius function $\mu$. First, we have the following Fourier inversion formula.

\begin{thma}[Theorem \ref{ThmSemigpBasisInvThm}]
Let $f=\sum_{s\in S}f(s)s \in \C S$ and let $\X$ be any complete set of inequivalent, irreducible representations of $\C S$. For $t\in D_j$, let $G(t)$ denote the maximal subgroup at any idempotent of $D_j$ and let $r(t)$ denote the number of idempotents in $D_j$. Then for any $s\in S$, we have
\[
f(s)=\sum_{\rho\in{\X}}d_\rho 
\sum_{t\in S:t\geq s} \frac{\mu(s,t)}{r(t)|G(t)|}
\sum_{v\in S:v^{-1}\leq t^{-1}} \mu(v^{-1},t^{-1}) \tr \left(\hat f(\rho) \rho(v^{-1})\right).
\]
\end{thma}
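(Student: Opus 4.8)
The plan is to build the inversion formula in two stages, mirroring the two-step structure of the Fourier transform for inverse semigroups: first invert the "group-algebra part" using the classical Schur/Wedderburn inversion for each maximal subgroup, then invert the "combinatorial part" coming from the $\D$-class/idempotent structure using Möbius inversion over the natural partial order $\leq$. Concretely, I expect that the Fourier transform $\hat f(\rho)$ factors through a sum over $v \in S$ of $f$-weighted values against $\rho(v)$, and that $\rho$ restricted to a maximal subgroup $G(t)$ together with the lattice of idempotents governs how $f(s)$ is recovered. The appearance of $d_\rho$, $\tfrac{1}{r(t)|G(t)|}$, and the two Möbius sums — one in the variable $t \geq s$ with $\mu(s,t)$, and one in $v^{-1} \leq t^{-1}$ with $\mu(v^{-1},t^{-1})$ — strongly suggests that the argument passes through an intermediate object indexed by pairs in the partial order, which is then collapsed by a double Möbius inversion.

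First I would recall (from the cited earlier work, which I am entitled to assume) the precise form of the Fourier transform on $\C S$ and the Wedderburn decomposition $\C S \cong \bigoplus_{\rho \in \X} M_{d_\rho}(\C)$, so that for $w \in S$ we have a "Fourier inversion at the level of the group algebra": the coefficient attached to each natural basis element can be written as a normalized trace $\sum_\rho d_\rho \tr(\hat f(\rho)\rho(w^{-1}))$, up to the combinatorial corrections forced by the non-group structure. Then I would invoke the structure theorem for finite inverse semigroups (Clifford–Munn–Ponizovskii–type decomposition), which expresses $\C S$ in terms of the maximal subgroups $G(t)$ and the idempotent lattice; this is where the factors $r(t)$ (number of idempotents in the $\D$-class) and $|G(t)|$ enter, since summing a matrix-coefficient over a $\D$-class introduces exactly these multiplicities. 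The key algebraic identity to establish is that $\sum_{v : v^{-1} \leq t^{-1}} \mu(v^{-1},t^{-1})\,\tr(\hat f(\rho)\rho(v^{-1}))$ isolates the contribution of the "top piece" $t$ — i.e., the part of $f$ supported at $\D$-class $D_j$ and not inherited from strictly smaller elements — and that this is the correct quantity to feed into the outer Möbius inversion $\sum_{t \geq s}\mu(s,t)(\cdots)$ to recover $f(s)$ exactly.

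So the key steps, in order, are: (1) write down the Fourier transform $\hat f(\rho)$ explicitly and record the Wedderburn/Schur orthogonality data for each block; (2) use the inverse-semigroup structure theorem to express a generic natural-basis coefficient of an element of $\C S$ as a sum over $\rho$ of traces $\tr(\hat f(\rho)\rho(w^{-1}))$ weighted by $d_\rho$ and by the idempotent/subgroup multiplicities — this is essentially the $R_n$ inversion theorem of the earlier paper generalized; (3) perform the inner Möbius inversion in the variable $v^{-1} \leq t^{-1}$ to strip off the contributions of proper restrictions and obtain a clean "$t$-localized" quantity; (4) perform the outer Möbius inversion over $t \geq s$ in the natural partial order to pass from $\D$-class-indexed data back to the value $f(s)$ at a single element; (5) check that the normalizing constant $\tfrac{1}{r(t)|G(t)|}$ is exactly what makes the two inversions compatible, by testing on $f = \delta_{s_0}$.

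The main obstacle, I expect, is step (2)–(3): disentangling how the trace $\tr(\hat f(\rho)\rho(v^{-1}))$ decomposes across the $\D$-classes once $\rho$ is an irreducible representation of the whole inverse semigroup rather than of a single maximal subgroup. Irreducible representations of $\C S$ restrict in a controlled but not trivial way to the "local" structure — an irreducible of $S$ corresponds to an irreducible of some maximal subgroup $G(D_j)$, but its matrix coefficients $\rho(v^{-1})$ are nonzero only when $v$ "acts" on the relevant $\D$-class — and making precise which terms in the sum over $v$ survive, and with what weight, is the delicate point. Everything after that is bookkeeping: once the correct localization is identified, the double Möbius inversion is formal, and the constants fall out of the orthogonality relations for the maximal subgroups combined with counting the $r(t)$ idempotents in each $\D$-class.
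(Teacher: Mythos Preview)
Your plan is essentially the paper's approach: combine classical group Fourier inversion on each maximal subgroup with M\"obius inversion over the natural partial order, with the factor $\tfrac{1}{r(t)|G(t)|}$ arising from the $\D$-class structure. Where your outline is vague, the paper is sharp by working explicitly through Steinberg's \emph{groupoid basis} $\{\lfloor s\rfloor\}$: it first proves an inversion formula for the groupoid-basis coefficients $g(t)=\sum_{u\geq t}f(u)$, namely
\[
g(t)=\frac{1}{r(t)|G(t)|}\sum_{\rho\in\X}d_\rho\,\tr\bigl(\hat f(\rho)\,\rho(\lfloor t^{-1}\rfloor)\bigr),
\]
(this is the paper's Theorem~\ref{FInvThm3}, obtained from the group case by a conjugation/trace argument), then simply expands $\lfloor t^{-1}\rfloor=\sum_{v^{-1}\leq t^{-1}}\mu(v^{-1},t^{-1})\,v^{-1}$ and applies $f(s)=\sum_{t\geq s}\mu(s,t)g(t)$. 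So your ``inner M\"obius inversion'' is not an inversion step at all---it is the definition of $\lfloor t^{-1}\rfloor$ being unpacked---and the ``main obstacle'' you flag (how $\rho(v^{-1})$ interacts with the $\D$-classes) is entirely absorbed into the groupoid-basis inversion theorem, whose proof is a direct reduction to the group case. Naming the groupoid basis as your intermediate object, rather than speaking of ``an intermediate object indexed by pairs in the partial order,'' would turn your sketch into the paper's proof.
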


Next, we have the following bound on the complexity of the inverse Fourier transform on $S$.

\begin{thma}[Theorem \ref{ThmMainInv}]For each $\D$-class $D_k$, pick an idempotent $e_k$, let $G_k$ be the maximal subgroup of $S$ at $e_k$, and let $\IRR(G_k)$ be a complete set of inequivalent irreducible representations of $\C G_k$. Let $\Y$ be the set of representations of $\C S$ induced by the $\IRR(G_k)$ (Definition \ref{DefInduced}). The number of operations required to compute the inverse Fourier transform of an arbitrary $\C$-valued function $f$ on $S$ expressed with respect to $\Y$ is no more than
\[
\Cplx(\mu_S) + \sum_{k=0}^n r_k^2 \Tinv(\IRR(G_k)),
\]
where $\Cplx(\mu_S)$ is the maximum number of operations required to compute the M\"obius transform of an arbitrary $\C$-valued function on $S$ and $\Tinv(\IRR(G_k))$ is the maximum number of operations required to compute the inverse Fourier transform of an arbitrary $\C$-valued function on $G_k$ expressed with respect to $\IRR(G_k)$.
\end{thma}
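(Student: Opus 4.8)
The plan is to invert, stage by stage, the two-part factorization of the forward Fourier transform on $\C S$ relative to the induced basis $\Y$: up to a single Möbius transform on $S$, that transform is a block-diagonal family of ordinary Fourier transforms on the maximal subgroups $G_0,\dots,G_n$ (this is exactly why the theorem asks for $f$ to be expressed with respect to $\Y$ — it is the basis in which this block structure is visible). So I would first undo the group transforms blockwise, then undo the Möbius step, and add the two operation counts.

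The key input is the block structure of $\hat f(\rho)$ for $\rho\in\Y$. Fix a $\D$-class $D_k$ with chosen idempotent $e_k$ and maximal subgroup $G_k$; since $S$ is inverse, $D_k$ has exactly $r_k$ $\mathcal R$-classes, $r_k$ $\mathcal L$-classes, and $r_k$ idempotents, and its Brandt structure gives Rees-type coordinates $s\leftrightarrow(i,j,g)$ with $i,j\in\{1,\dots,r_k\}$ and $g\in G_k$, running bijectively over $D_k$; write $z^{(k)}_{ij}(g)\in D_k$ for the element with coordinates $(i,j,g)$. For $\gamma\in\IRR(G_k)$ the induced representation $\rho_{k,\gamma}\in\Y$ has degree $r_kd_\gamma$, and I claim that, viewing $\hat f(\rho_{k,\gamma})$ as an $r_k\times r_k$ array of $d_\gamma\times d_\gamma$ blocks, its $(i,j)$-block equals the ordinary $G_k$-Fourier transform $\widehat{F^{(k)}_{ij}}(\gamma)$ of the function $F^{(k)}_{ij}\in\C G_k$ given by
\[
F^{(k)}_{ij}(g)\;=\;\sum_{t\in S:\,t\ge z^{(k)}_{ij}(g)}f(t).
\]
This is dual to the block identity behind the forward FFT in \cite{InvSemiFFT}: one expands $\hat f(\rho_{k,\gamma})=\sum_{s\in S}f(s)\rho_{k,\gamma}(s)$, uses that $\rho_{k,\gamma}(s)$ has $(i,j)$-block $\gamma(g)$ if $s\ge z^{(k)}_{ij}(g)$ (for the then-unique such $g$) and the zero block otherwise, and collects terms by $g$. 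Writing $\zeta f$ for the summatory function $s\mapsto\sum_{t\ge s}f(t)$ on $S$, this reads $F^{(k)}_{ij}(g)=(\zeta f)\bigl(z^{(k)}_{ij}(g)\bigr)$, and as $(k,i,j,g)$ ranges over all coordinate data the elements $z^{(k)}_{ij}(g)$ exhaust $S$.

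Granting this, the inversion algorithm is immediate. Given $f$ with respect to $\Y$, that is, given all matrices $\hat f(\rho_{k,\gamma})$: (a) for each $k\in\{0,\dots,n\}$ and each ordered pair $(i,j)\in\{1,\dots,r_k\}^2$ I extract the blocks $\bigl(\widehat{F^{(k)}_{ij}}(\gamma)\bigr)_{\gamma\in\IRR(G_k)}$ and apply the inverse Fourier transform on $G_k$ with respect to $\IRR(G_k)$, recovering $F^{(k)}_{ij}\in\C G_k$ and hence, by the displayed identity, the value $(\zeta f)(s)$ for every $s\in S$; (b) I apply the Möbius transform $\mu_S$ to $\zeta f$ to recover $f$, via $f(s)=\sum_{t\ge s}\mu(s,t)(\zeta f)(t)$, which is valid because $\mu_S$ inverts the $\le$-summation $\zeta$. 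For the count: step (a) performs $r_k^2$ inverse Fourier transforms on $G_k$ per $\D$-class $D_k$, each costing at most $\Tinv(\IRR(G_k))$ operations, while extracting blocks and reading off the values of $\zeta f$ are mere rearrangements of stored complex numbers and cost nothing, so step (a) costs at most $\sum_{k=0}^n r_k^2\,\Tinv(\IRR(G_k))$; step (b) costs at most $\Cplx(\mu_S)$ by definition. Since the two stages run in sequence, the total is at most $\Cplx(\mu_S)+\sum_{k=0}^n r_k^2\,\Tinv(\IRR(G_k))$, as required. As a consistency check, specializing Theorem \ref{ThmSemigpBasisInvThm} to $\X=\Y$ yields exactly this composite: the inner sum over $v$ there carries out Fourier inversion on the groups $G(t)$ (whence the factor $1/(r(t)|G(t)|)$) to produce $(\zeta f)(t)$, and the outer sum $\sum_{t\ge s}\mu(s,t)$ is step (b).

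The one substantive step — and the only place where the representation theory of $\C S$ is used — is the block identity above: that the $(i,j)$-block of $\hat f(\rho_{k,\gamma})$ is the $G_k$-Fourier transform of a $\le$-partial sum of $f$, equivalently that an element $s$ contributes $\gamma(g)$ to that block exactly when $s\ge z^{(k)}_{ij}(g)$ in the natural partial order. This is a compatibility between the explicit action of the induced representations (Definition \ref{DefInduced}) and the $\D$-class coordinatization of $S$, and it is the mirror image of the block structure already established for the forward transform in \cite{InvSemiFFT}; once it is in hand, the rest — that step (a) recovers each value of $\zeta f$ exactly once, and that a single Möbius transform on $S$ then finishes the job — is routine bookkeeping.
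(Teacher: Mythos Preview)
Your proposal is correct and follows essentially the same two-stage strategy as the paper: first invert the group Fourier transforms blockwise to recover the groupoid-basis coefficients $g(s)=(\zeta f)(s)$, then apply the M\"obius transform to pass from the groupoid basis to the semigroup basis. Your Rees-type coordinates $z^{(k)}_{ij}(g)$ are exactly the paper's $p_a\,s\,p_b^{-1}$ (with $a,b$ the idempotents indexed by $i,j$ and $s\in G_k$), and the block identity you isolate is precisely the content of Section~\ref{SubSecEvalFT}, so the two arguments coincide.
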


Finally, we give fast Fourier and inverse Fourier transforms for several families of inverse semigroups, which result in the following complexity bounds.

\begin{thma}[Theorem \ref{RnInversionResult}, Theorem \ref{ThmRnWreathInversion}, Theorem \ref{ThmPlanarRookFFT}, Theorem \ref{ThmFFTCn}, Theorem \ref{ThmRotnCplx}] There exists a complete set of inequivalent, irreducible representations $\Y$ of $\C S$ such that the Fourier transform and the inverse Fourier transform relative to $\Y$ of an arbitrary element $f\in \C S$ can be computed in no more than
\begin{itemize}
	\item $O(|S|\log^2 |S|)$ operations if $S=R_n$, the rook monoid on $n$ objects,
	\item $O(|S|\log^4 |S|)$ operations if $S=\Gwr R_n$, the wreath product of $R_n$ with any finite group $G$,
	\item $O(|S|\log^2 |S|)$ operations if $S=P_n$, the planar rook monoid on $n$ objects,
	\item $O(|S|\log^2 |S|)$ operations if $S=C_n$, the partial cyclic shift monoid on $n$ objects, and 
	\item $O(|S|\log |S|)$ operations if $S=\Rot_n$, the partial rotation monoid on $n$ objects.
\end{itemize} 
\end{thma}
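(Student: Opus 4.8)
The plan is to establish each of the five complexity bounds separately, since the monoids involved are of quite different structural types, but to organize all five proofs around the two general reduction results already in hand: Theorem~\ref{ThmFFTMainResult} for forward transforms and Theorem~\ref{ThmMainInv} for inverse transforms. Both theorems reduce a Fourier (or inverse Fourier) computation on a finite inverse semigroup $S$ to two pieces: (i) the cost $\Cplx(\mu_S)$ of the M\"obius transform on $S$, and (ii) the weighted sum $\sum_k r_k^2\,\T(\IRR(G_k))$ (resp.\ $\sum_k r_k^2\,\Tinv(\IRR(G_k))$) of the costs of group FFTs on the maximal subgroups $G_k$, with $r_k$ the number of idempotents in the $k$-th $\D$-class. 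So for each of the five families the proof has the same shape: identify the $\D$-class structure, the maximal subgroups $G_k$, and the idempotent counts $r_k$; bound the group-FFT term using the best available group result (Maslen~\cite{Maslen} for $S_n$, Rockmore~\cite{DanWreath} for wreath products of $S_n$, the classical FFT for $\Z_n$, or a trivial bound for trivial subgroups); bound the M\"obius-transform term; and check that the sum is $O(|S|\log^c|S|)$ for the claimed $c$, using $|S|$ computed from the size formula $|S|=\sum_k r_k^2|G_k|$.

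I would handle the rook monoid $R_n$ first as the template. Here the $\D$-classes are indexed by rank $k=0,\dots,n$; the maximal subgroup in rank $k$ is $S_k$, and $r_k=\binom{n}{k}$, giving $|R_n|=\sum_k \binom{n}{k}^2 k!$. The group-FFT term is $\sum_k \binom{n}{k}^2\,\T(\IRR(S_k))$; inserting Maslen's $O(|S_k|\log^2|S_k|)=O(k!\log^2 k!)$ bound and comparing termwise against $|R_n|\log^2|R_n|$ gives the $O(|R_n|\log^2|R_n|)$ bound for this piece, and the inverse piece is identical using Maslen's inverse algorithm. The M\"obius-transform term is controlled by the Bj\"orklund et al.\ algorithm~\cite{Bjork}: the relevant lattice (the lattice of idempotents, i.e.\ the subset lattice structure governing $\leq$ on $R_n$) has few irreducibles, so $\Cplx(\mu_{R_n})$ is $O(|R_n|\log^2|R_n|)$ or better. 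Adding the two pieces finishes $R_n$. The wreath product $\Gwr R_n$ is handled the same way, with $S_k$ replaced by $G\wr S_k$ and Rockmore's $O(|S|\log^4|S|)$ bound~\cite{DanWreath} in place of Maslen's; the idempotent structure (hence the M\"obius term) is essentially unchanged, so the bottleneck is the $\log^4$ from the group wreath FFT.

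For the remaining three monoids the maximal subgroups are much smaller, so a different balance of the two terms appears. For the planar rook monoid $P_n$, all maximal subgroups are trivial (this is the key structural fact: $P_n$ has $\D$-classes indexed by rank with trivial group, $r_k=\binom{n}{k}$, and $|P_n|=\sum_k\binom{n}{k}^2=\binom{2n}{n}$), so the group-FFT term collapses to something linear-ish and the entire cost is $\Cplx(\mu_{P_n})$; I would bound this via the structure of the order on $P_n$ and the Bj\"orklund-type M\"obius algorithm to get $O(|P_n|\log^2|P_n|)$. For the partial cyclic shift monoid $C_n$ and the partial rotation monoid $\Rot_n$, the maximal subgroups are cyclic, so the group-FFT term uses the classical $O(m\log m)$ FFT on $\Z_m$; the interesting content is the M\"obius/order structure --- as the introduction hints, for these two $\Z_n$-generalizations a key step can be done in $O(|S|\log\log|S|)$, which is why $\Rot_n$ lands at $O(|S|\log|S|)$ while $C_n$ is $O(|S|\log^2|S|)$. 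In each case I would (a) describe the $\D$-class and idempotent data explicitly, (b) plug into Theorems~\ref{ThmFFTMainResult} and~\ref{ThmMainInv}, (c) bound each term, and (d) add.

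The main obstacle I anticipate is the M\"obius-transform term $\Cplx(\mu_S)$ rather than the group-FFT term, since the latter is an off-the-shelf citation in every case. Getting $\Cplx(\mu_S)$ down to the claimed bound requires understanding the partial order $\leq$ on each $S$ well enough to invoke (or adapt) the Bj\"orklund et al.\ ``few irreducibles'' algorithm~\cite{Bjork} --- concretely, showing that the meet-semilattice of idempotents of $S$ has a small irreducible set (as for subset lattices) or otherwise admits a fast zeta/M\"obius inversion. For $\Rot_n$ in particular, squeezing a $\log$ factor down to $\log\log$ will require exploiting the cyclic-interval structure of its idempotents, and verifying that the resulting M\"obius computation really is $O(|\Rot_n|\log\log|\Rot_n|)$ (and hence dominated, after adding the $O(|\Rot_n|\log|\Rot_n|)$ cyclic-group FFT term, by $O(|\Rot_n|\log|\Rot_n|)$) is the delicate step. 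A secondary bookkeeping point is making sure the termwise comparisons $\sum_k r_k^2 T(G_k)\le C\,|S|\log^c|S|$ actually go through --- this uses $|S_k|\le|S|$, $r_k\le|S|$, and $\log|S_k|\le\log|S|$, so each summand is bounded by $r_k^2|G_k|\cdot C\log^c|S|$ and the sum over $k$ telescopes against $|S|=\sum_k r_k^2|G_k|$; this is routine but must be stated.
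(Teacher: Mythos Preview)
Your overall architecture is exactly that of the paper: for each monoid, invoke Theorems~\ref{ThmFFTMainResult} and~\ref{ThmMainInv}, bound the group-FFT sum using Maslen, Rockmore, or the classical cyclic FFT, and bound $\Cplx(\zeta_S),\Cplx(\mu_S)$ via Bj\"orklund et al. One clarification and one genuine correction are in order.

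\textbf{Clarification.} The M\"obius (and zeta) transform in $\Cplx(\mu_S)$ is taken over the \emph{full poset} $(S,\leq)$, not over the idempotent lattice $E(S)$. For $R_n$, $\Gwr R_n$, $P_n$, and $C_n$ the key point is that $(S,\leq)$ itself is a meet-semilattice (the meet of two partial maps being their maximal common restriction), and the join-irreducibles of the associated lattice are precisely the rank-$1$ elements; there are $n^2$ (or $|G|n^2$) of these, so Theorem~\ref{ThmBjork} gives $\Cplx(\mu_S)=O(n^2|S|)=O(|S|\log^2|S|)$. Your phrasing ``lattice of idempotents'' suggests the wrong object; make sure you verify the meet-semilattice structure on all of $S$ in each case.

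\textbf{Correction for $\Rot_n$.} You have the two terms reversed. The group-FFT term is the one that comes out $O(|\Rot_n|\log\log|\Rot_n|)$: once you work out that the $\D$-class of an idempotent $e$ has $j(e)$ idempotents and maximal subgroup $\Z_{n/j(e)}$, the sum $\sum j^2\cdot c\,(n/j)\log(n/j)\le cn\log n\sum j = cn\log n\,(2^n-1)\approx (\log n)|\Rot_n|$ gives the $\log\log$ bound. Conversely, the M\"obius term is \emph{not} squeezed to $\log\log$; Bj\"orklund only gives $O(n^2|\Rot_n|)=O(|\Rot_n|\log^2|\Rot_n|)$ there, since there are $n^2$ rank-$1$ elements. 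What actually brings it down to $O(|\Rot_n|\log|\Rot_n|)$ is a direct argument: $(\Rot_n,\leq)$ is $n$ copies of the Boolean lattice $B_n$ identified at their minimum, so one runs the standard $n2^n$-operation M\"obius transform on each copy separately and patches at the rank-$0$ element. This yields $n\cdot n2^n\approx n|\Rot_n|$ operations. So the ``delicate step'' you anticipate---getting the M\"obius term to $\log\log$---is neither needed nor attempted in the paper; the $\log|\Rot_n|$ in the final bound comes from the M\"obius side, not the group-FFT side.
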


We proceed as follows. Although we assume some familiarity with the ideas in \cite{InvSemiFFT}, we begin in Section \ref{SecBackground} with a quick review of the major ideas and terminology we need for our developments. Specifically, we review
some basic inverse semigroup theory (Section \ref{SubSecISGS}), the definition of the Fourier transform for finite inverse semigroups (Section \ref{SubSecFTDefs}), B.\ Steinberg's isomorphism between the inverse semigroup algebra $\C S$ and a direct sum of matrix algebras over group algebras \cite{Steinberg2} (Section \ref{SecMatrixAlgsOverGpAlgs}), and a general approach to the construction of FFTs for inverse semigroups (Section \ref{SubSecEvalFT}). 
In Section \ref{SecFourierInversionThm} we state and prove our four Fourier inversion formulas for finite inverse semigroups.
In Section \ref{SecFastInversion} we introduce a general method for the construction of fast inverse Fourier transforms on finite inverse semigroups and we establish a general bound on the inverse Fourier transform on a finite inverse semigroup relative to an induced set of representations. 
Section \ref{FastInversionSpecificISGS} contains our new fast Fourier and inverse Fourier transforms. We give fast inverse Fourier transforms for the rook monoid and its wreath product by arbitrary finite groups in Sections \ref{SecRookFIFT} and \ref{SecWreathFIFT}. Section \ref{SecPlanarRookFFT} contains our FFT and FIFT for the planar rook monoid. In Section \ref{SecCnAndRotn} we introduce the partial cyclic shift monoid and the partial rotation monoid, and we conclude with FFTs and FIFTs for these monoids in Sections \ref{SubsecPartialCyclicShift} and \ref{SubsecPartialRotation}.

\section{Background material}

\label{SecBackground}

\subsection{Inverse semigroups}

\label{SubSecISGS}

A semigroup is a nonempty set with an associative binary operation. A monoid is a semigroup with identity. Unless otherwise specified, we will write our semigroup operations multiplicatively. 

\begin{defn}A semigroup $S$ is an {\em inverse semigroup} if for each $x\in S$ there exists a unique $y\in S$ such that $xyx=x$ and $yxy=y$. In this case we say that $y$ is the {\em inverse} of $x$, and we write $x^{-1}=y$. 
\end{defn}

It follows that in an inverse semigroup, $xx^{-1}$ and $x^{-1}x$ are idempotent, and if $e$ is idempotent then $e=e^{-1}$. It is clear that every group is an inverse semigroup, and it is straightforward to show that an inverse semigroup is a group if and only if it has exactly one idempotent (the identity of the group). An {\em inverse monoid} is an inverse semigroup with an identity.



The {\em symmetric inverse monoid} (also known as the {\em rook monoid}) $R_n$ is the set of all injective partial functions from $\{1,2,\ldots,n\}$ to $\{1,2,\ldots,n\}$ (including the function with empty domain and range) under the usual operation of partial function composition. In this paper we view maps as acting on the left of sets and we compose maps from right to left, so if $\sigma,\gamma\in R_n$, then $\sigma \gamma\in R_n$ is the partial function whose domain is 
given by 
\[
\dom(\sigma \gamma) = \{x\in \{1,2,\ldots,n\}: x\in \dom(\gamma) \up{ and } \gamma(x)\in \dom(\sigma)\}, 
\]
and if $x\in \dom(\sigma\gamma),$ then $(\sigma\gamma)(x)=\sigma(\gamma(x))$.

\begin{defn}
If $S$ and $T$ are semigroups, then a {\em homomorphism} $\phi:S\rightarrow T$ is a map such that $\phi(ab)=\phi(a)\phi(b)$ for all $a,b\in S$. An {\em isomorphism} is a bijective homomorphism.
\end{defn}

A {\em rook matrix} of dimension $n$ is an $n\times n$ matrix with entries in $\{0,1\}$ that has at most one 1 in each row and column. Such a matrix can be thought of as a placement of non-attacking rooks on an $n\times n$ chessboard. $R_n$ is called the rook monoid because it is isomorphic to the semigroup of rook matrices of dimension $n$ under ordinary matrix multiplication, where the partial function $\sigma\in R_n$ corresponds to the rook matrix that has a 1 in the $i,j$ position whenever $\sigma(j)=i$, and a $0$ in all other positions \cite{Solomon}.
The {\em rank} of an element $\sigma\in R_n$ is $|\dom(\sigma)|=|\ran(\sigma)|$. 
The rook monoid plays the same role for inverse semigroups that the symmetric group does for groups, in the following variation of Cayley's theorem \cite{Lawson}.

\begin{thm}
\label{ThmCayleyInvSemi}
If $S$ is a finite inverse semigroup, then $S$ is isomorphic to an inverse sub-semigroup of $R_{|S|}$.
\end{thm}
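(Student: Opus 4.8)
The natural approach is a partial-function analogue of Cayley's theorem, namely the Wagner--Preston representation. I will embed $S$ into the monoid $\mathcal I(S)$ of injective partial functions on the underlying set of $S$; since $\mathcal I(S)$ is isomorphic to $R_{|S|}$ via any bijection $S\leftrightarrow\{1,\dots,|S|\}$, this suffices. The embedding sends each $s\in S$ to the partial left translation by $s$: for $s\in S$ set
\[
\dom(\rho_s)=\{x\in S: s^{-1}sx=x\}=s^{-1}sS, \qquad \rho_s(x)=sx \quad (x\in\dom(\rho_s)).
\]
Where Cayley's theorem uses that left translation by a group element is a bijection, here left translation is only partially defined and only becomes injective after this restriction, which is exactly what $\rho_s$ records.

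I would carry out the verification in three steps. First, $\rho_s\in\mathcal I(S)$: injectivity is immediate, since $sx=sy$ with $x,y\in\dom(\rho_s)$ gives $x=s^{-1}sx=s^{-1}sy=y$. Second, $s\mapsto\rho_s$ is a homomorphism, i.e.\ $\rho_s\rho_t=\rho_{st}$ (with maps composed right-to-left as in this paper, so $(\rho_s\rho_t)(x)=s(tx)=(st)x$); the equality of values is just associativity, and the real content is that the composite $\rho_s\rho_t$ has domain exactly $\dom(\rho_{st})=\{x:(st)^{-1}(st)x=x\}$. Third, $s\mapsto\rho_s$ is injective: if $\rho_s=\rho_t$ then comparing domains gives $s^{-1}sS=t^{-1}tS$, so $s^{-1}s\in t^{-1}tS$ and $t^{-1}t\in s^{-1}sS$, which for idempotents (using that idempotents commute) forces $s^{-1}s=t^{-1}t$; evaluating both maps at this common idempotent then gives $s=ss^{-1}s=tt^{-1}t=t$. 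Finally, since $\rho$ is a homomorphism, uniqueness of inverses in $S$ forces $\rho_s^{-1}=\rho_{s^{-1}}$ in $R_{|S|}$, so the image of $\rho$ is an inverse sub-semigroup of $R_{|S|}$, as required.

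The one place needing care is the domain computation in the homomorphism step. One inclusion is a direct substitution; for the reverse I would use $aa^{-1}a=a$ (equivalently $a^{-1}aa^{-1}=a^{-1}$) together with the commutativity of idempotents in $S$, which lets me slide the relevant idempotents past one another. I do not expect any genuine obstacle --- the commuting-idempotents identity is precisely the tool that makes the computation go through --- and finiteness of $S$ enters only to give meaning to the subscript in $R_{|S|}$.
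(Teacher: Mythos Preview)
Your argument is the standard Wagner--Preston representation and is correct; the sketch you give, including the domain computation for $\rho_s\rho_t=\rho_{st}$ via commuting idempotents and the injectivity argument via evaluation at $s^{-1}s=t^{-1}t$, goes through without difficulty. Note, however, that the paper does not actually prove this theorem: it is quoted as a known result with a citation to Lawson's book, so there is no ``paper's own proof'' to compare against. What you have written is essentially the textbook proof one would find there.
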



It is easy to see that $R_n$ contains $\binom{n}{k}^2 k!$ elements of rank $k$, so we have:

\begin{thm}$|R_n|=\sum_{k=0}^n \binom{n}{k}^2 k!$.
\end{thm}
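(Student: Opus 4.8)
The plan is to prove the slightly stronger statement asserted in the sentence immediately preceding the theorem---that $R_n$ has exactly $\binom{n}{k}^2 k!$ elements of rank $k$---and then sum over $k$. First I would recall that an element $\sigma\in R_n$ is an injective partial function on $\{1,2,\ldots,n\}$, and that its rank is $|\dom(\sigma)|=|\ran(\sigma)|$, a quantity that necessarily lies in $\{0,1,\ldots,n\}$. Thus the rank function partitions $R_n$ into $n+1$ blocks, and it suffices to count each block separately.

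Next I would observe that a rank-$k$ element is specified uniquely by three independent pieces of data: its domain, which is a $k$-element subset of $\{1,\ldots,n\}$; its range, which is likewise a $k$-element subset of $\{1,\ldots,n\}$; and the injective map itself, which is a bijection from the chosen domain onto the chosen range. There are $\binom{n}{k}$ choices for the domain, $\binom{n}{k}$ choices for the range, and $k!$ bijections between two fixed $k$-element sets. Every rank-$k$ element of $R_n$ arises from exactly one such triple, and conversely every triple yields a well-defined element of $R_n$ of rank $k$, so the block of rank-$k$ elements has size $\binom{n}{k}^2 k!$. (In the rook-matrix picture this is the same as choosing $k$ occupied columns, $k$ occupied rows, and a matching between them.)

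Finally, summing over the possible ranks $k=0,1,\ldots,n$ gives $|R_n|=\sum_{k=0}^n \binom{n}{k}^2 k!$, where the $k=0$ term contributes $\binom{n}{0}^2 0!=1$, accounting for the partial function with empty domain and range. The argument is entirely elementary; the only point requiring any care is verifying that the three choices are genuinely independent and that they exhaust the rank-$k$ elements without repetition, which is immediate from the fact that a partial injection is determined precisely by its domain, its range, and its values.
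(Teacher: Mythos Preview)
Your proposal is correct and follows exactly the approach the paper indicates: the paper simply asserts, in the sentence immediately preceding the theorem, that $R_n$ contains $\binom{n}{k}^2 k!$ elements of rank $k$ and then sums over $k$, and your argument spells out precisely this count (choose domain, choose range, choose bijection).
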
 

\subsection{The Fourier transform}

\label{SubSecFTDefs}

Let $S$ be a finite inverse semigroup. The natural basis of the semigroup algebra $\C S$ is called the {\em semigroup basis}. Multiplication in $\C S$ (also called {\em convolution}) is given by the linear extension of the multiplication in $S$ via the distributive law.
%
As with groups, Fourier transforms are defined in terms of representations. 

\begin{defn}A {\em matrix representation} (or just {\em representation}) $\rho$ of $\C S$ is an algebra homomorphism $\rho:\C S\rightarrow M_{n}(\C)$ for some $n\in \N$. The number $n$ is the {\em dimension} of $\rho$, which we will denote by $d_\rho$.
\end{defn}



%

\begin{defn}
Let $f\in \C S$ with $f=\sum_{s\in S}f(s)s$. If $\rho$ is a representation of $\C S$, then the {\em Fourier transform of $f$ at $\rho$}, denoted $\hat{f}(\rho)$, is
\[
\hat{f}(\rho)=\rho(f) = \sum_{s\in S}f(s)\rho(s).
\]
\end{defn}

Adjectives such as {\em irreducible}, {\em inequivalent}, and {\em complete} apply to sets of representations of $\C S$ in a fashion similar to that for group algebras. Precise definitions may be found in \cite{InvSemiFFT,RookPRD}. Munn showed that $\C S$ is semisimple \cite{Munn},
so Wedderburn's theorem applies to $\C S$.

\begin{thm}[Wedderburn's theorem] Let $\X$ be a complete set of inequivalent, irreducible representations of $\C S$. Then $\X$ is finite, and $\X$ induces an algebra isomorphism (called the {\em Wedderburn isomorphism induced by $\X$}, or just the {\em Wedderburn isomorphism} if $\X$ is understood)
\begin{equation}
\label{eqWedderburn}
\bigoplus_{\rho\in\X}\rho: \C S\rightarrow \bigoplus_{\rho\in\X} M_{d_\rho}(\C).
\end{equation}
Explicitly, if $f\in \C S$ with $f=\sum_{s\in S}f(s)s$, then
\[
f\mapsto \bigoplus_{\rho\in\X}\left(\sum_{s\in S}f(s)\rho(s)\right)
=\bigoplus_{\rho\in\X} \hat f(\rho)
\]
in the Wedderburn isomorphism induced by $\X$. 
\end{thm}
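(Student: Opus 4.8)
The plan is to deduce the statement from the classical Wedderburn--Artin structure theorem together with Munn's result, quoted in the excerpt, that $\C S$ is semisimple; since this is the standard Wedderburn theory I expect the write-up to be short, citing a reference for the structure theorem and for the equivalence of the usual notions of semisimplicity. First I would note that $\C S$ is a finite-dimensional $\C$-algebra because $S$ is finite, and that it is semisimple by Munn's theorem. Wedderburn--Artin then supplies an algebra isomorphism $\Phi\colon \C S \to \bigoplus_{i=1}^m M_{n_i}(D_i)$ for some $m\in\N$, positive integers $n_i$, and finite-dimensional division algebras $D_i$ over $\C$; since $\C$ is algebraically closed, each $D_i=\C$, so $\Phi\colon \C S \to \bigoplus_{i=1}^m M_{n_i}(\C)$.

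Next I would match a complete set of inequivalent irreducibles to the factors of $\Phi$. Let $\rho_i$ be the composite of $\Phi$ with the projection onto the $i$-th summand $M_{n_i}(\C)$; this is a representation of $\C S$ of dimension $n_i$, it is irreducible (the standard module $\C^{n_i}$ is simple over $M_{n_i}(\C)$, and since $\C S$ acts on it through $M_{n_i}(\C)$ it has the same submodules as a $\C S$-module), and the $\rho_i$ are pairwise inequivalent because their kernels $\ker\rho_i=\Phi^{-1}\bigl(\bigoplus_{j\neq i}M_{n_j}(\C)\bigr)$ are $m$ distinct maximal two-sided ideals of $\C S$. Conversely, every simple $\C S$-module becomes, via $\Phi$, a simple module over $\bigoplus_i M_{n_i}(\C)$ and so is isomorphic to some $\C^{n_i}$; hence every irreducible representation of $\C S$ is equivalent to some $\rho_i$. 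Therefore any complete set $\X$ of inequivalent irreducibles has exactly $m$ elements, one equivalent to each $\rho_i$ --- in particular $\X$ is finite --- and a representation $\rho\in\X$ equivalent to $\rho_i$ has $d_\rho=n_i$.

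For the isomorphism claim I would first treat the distinguished choice $\X_0=\{\rho_1,\dots,\rho_m\}$: unwinding the definition of $\rho_i$ gives $\bigoplus_{i=1}^m\rho_i=\Phi$ after the canonical identification $\bigoplus_i M_{d_{\rho_i}}(\C)=\bigoplus_i M_{n_i}(\C)$, so this map is an algebra isomorphism. A general $\X$ is obtained from $\X_0$ by replacing each $\rho_i$ by an equivalent representation, i.e.\ by conjugating the $i$-th block by a fixed invertible matrix; this alters $\bigoplus_{\rho\in\X}\rho$ only by post-composition with an algebra automorphism of $\bigoplus_{\rho\in\X}M_{d_\rho}(\C)$, so it remains an algebra isomorphism. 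Finally, the explicit description is just the definition of the Fourier transform applied summand by summand: for $f=\sum_{s\in S}f(s)s$ one gets $\bigl(\bigoplus_{\rho\in\X}\rho\bigr)(f)=\bigoplus_{\rho\in\X}\sum_{s\in S}f(s)\rho(s)=\bigoplus_{\rho\in\X}\hat f(\rho)$.

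The only step with genuine content --- and the one I would flag as the crux --- is the passage from abstract semisimplicity of $\C S$ to the concrete form $\bigoplus_i M_{n_i}(\C)$, together with the bookkeeping that identifies the matrix blocks with the equivalence classes of irreducible representations and the block sizes $n_i$ with the dimensions $d_\rho$. Everything else is either a citation (Munn's semisimplicity) or a routine unwinding of definitions, so I would present those parts tersely.
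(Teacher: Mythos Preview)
Your proposal is correct and is precisely the standard Wedderburn--Artin argument; note, however, that the paper does not actually give a proof of this statement---it simply states Wedderburn's theorem as a known result, invoking Munn's semisimplicity of $\C S$ and the classical structure theorem, which is exactly what you have fleshed out.
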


Let $\X$ be any complete set of inequivalent irreducible representations of $\C S$. 

\begin{defn}
The Wedderburn isomorphism induced by $\X$ is called the {\em Fourier transform on $\C S$ relative to $\X$} (or just the {\em Fourier transform on $\C S$}, if $\X$ is understood). In particular, if $f\in \C S$ with $f=\sum_{s\in S}f(s)s$, then the {\em Fourier transform of $f\in \C S$ relative to $\X$} is the block matrix
\[
\bigoplus_{\rho\in\X}\left(\sum_{s\in S}f(s)\rho(s)\right) = 
\bigoplus_{\rho\in\X}\hat f (\rho).
\]
\end{defn}


The Fourier transform of $f$ relative to $\X$ can also be described in terms of a change of basis within $\C S$. The natural basis of the algebra on the right in (\ref{eqWedderburn}) is the set of matrices in this algebra with a 1 in one position and 0 in all other positions, and the inverse image of this basis in $\C S$ is the target basis in $\C S$ for the Fourier transform relative to $\X$. 

\begin{defn}
\label{DefFourierBasis}
If $\X$ is a complete set of inequivalent irreducible representations of $\C S$, then the inverse image of the natural basis of the algebra on the right in (\ref{eqWedderburn}) is called the {\em Fourier basis of $\C S$ relative to $\X$}. It is also called the {\em dual matrix coefficient basis for $\C S$ relative to $\X$} \cite{Maslen}.
\end{defn}

Thus, the Fourier transform of $f$ relative to $\X$ is a collection of (matrix) coefficients which provide the expression of $f$ in terms of the Fourier basis of $\C S$ relative to $\X$. 
In general, a {\em Fourier basis} of $\C S$ is any basis of $\C S$ which arises in this manner by some choice $\X$ of inequivalent, irreducible representations of $\C S$. If $f\in \C S$ is expressed with respect to the Fourier basis of $\C S$ relative to $\X$, we simply say that $f$ is expressed with respect to $\X$. Note that a dimensionality count of the algebras in (\ref{eqWedderburn}) yields
\begin{equation*}
|S|=\sum_{\rho\in\X}{d_\rho}^2.
\end{equation*}

For $S=\Z_n=\{0 ,1, \ldots, n-1\}$, the cyclic group of order $n$, the Fourier transform of $f=\sum_{t=0}^{n-1}f(t)t\in \C \Z_n$ is a diagonal matrix whose entries comprise the usual discrete Fourier transform of $f$ \cite{RookFFT,InvSemiFFT,RookPRD}.

\begin{defn}
If $f\in \C S$ is expressed with respect to $\X$, then the {\em inverse Fourier transform of $f$} is the collection of coefficients $\{f(s):s\in S\}$ such that $f=\sum_{s\in S}f(s)s$. The {\em inverse Fourier transform on $\C S$ relative to $\X$} is the change of basis within $\C S$ from the Fourier basis relative to $\X$ to the natural basis.
\end{defn}

Viewing Fourier transforms and inverse Fourier transforms as changes of basis within $\C S$, it follows that a naive implementation of the Fourier transform requires $|S|^2$ operations to apply to an arbitrary element of $\C S$, assuming that a complete set of inequivalent, irreducible representations of $\C S$ is precomputed on the natural basis of $\C S$ and stored in memory. Similarly, the inverse Fourier transform requires $|S|^2$ operations to apply to an arbitrary element of $\C S$, provided the inverse of the Fourier transform matrix is precomputed and stored in memory. A naive implementation of convolution also requires $|S|^2$ operations to compute the product $fg$ for  $f,g\in \C S$.

The convolution theorem for inverse semigroups is simply the restatement of the fact that the Wedderburn isomorphism is a homomorphism.

\begin{thm}The Fourier transform on $\C S$ turns convolution of functions into multiplication of block-diagonal matrices. The Fourier transform turns convolution into pointwise multiplication if and only if every irreducible representation of $\C S$ has dimension 1.
\end{thm}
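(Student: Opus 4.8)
The plan is to derive both assertions directly from the content of Wedderburn's theorem above, namely that the map $\bigoplus_{\rho\in\X}\rho$ of (\ref{eqWedderburn}) is an algebra \emph{isomorphism}, and in particular a homomorphism of the multiplicative structure. No genuinely new computation is needed; everything follows by unwinding definitions.

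For the first assertion I would begin from the fact that each $\rho\in\X$ is an algebra homomorphism, so that for $f,g\in\C S$ we have $\rho(fg)=\rho(f)\rho(g)$, i.e. $\widehat{fg}(\rho)=\hat f(\rho)\hat g(\rho)$. Assembling these identities over $\rho\in\X$ gives
\[
\bigoplus_{\rho\in\X}\widehat{fg}(\rho)=\bigoplus_{\rho\in\X}\hat f(\rho)\hat g(\rho),
\]
and the right-hand side is precisely the product of the block-diagonal matrices $\bigoplus_\rho\hat f(\rho)$ and $\bigoplus_\rho\hat g(\rho)$ in the target algebra $\bigoplus_\rho M_{d_\rho}(\C)$, where multiplication is carried out block by block. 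This is exactly the statement that the Fourier transform carries convolution to multiplication of block-diagonal matrices.

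For the second assertion I would interpret ``pointwise multiplication'' as the entrywise (Hadamard) product of the block-diagonal Fourier-coefficient matrices, and reduce the claim to a fact about matrix algebras. The key leverage is that $\bigoplus_{\rho}\rho$ is surjective: as $f$ and $g$ range over $\C S$, the pairs $\bigl(\bigoplus_\rho\hat f(\rho),\bigoplus_\rho\hat g(\rho)\bigr)$ range over \emph{all} pairs of block-diagonal matrices in $\bigoplus_\rho M_{d_\rho}(\C)$. Hence the Fourier transform turns every convolution into a pointwise product if and only if, in each block, ordinary matrix multiplication coincides with the Hadamard product for all matrices of that size. The backward direction is then immediate: if every $d_\rho=1$, each block is a scalar, and blockwise matrix multiplication is literally coordinatewise multiplication. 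For the forward direction I would argue the contrapositive: if some $d_\rho\geq 2$, I would exhibit matrices witnessing the failure in that block---e.g. taking $A=E_{12}$ and $B=E_{21}$, the matrix product $AB=E_{11}$ is nonzero while the Hadamard product $A\circ B=0$, since $E_{12}$ and $E_{21}$ have disjoint support. Pulling these back through the isomorphism produces $f,g\in\C S$ for which $\widehat{fg}$ differs from the entrywise product of $\hat f$ and $\hat g$.

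There is no real obstacle here; the content is entirely formal. The only point that warrants care is the logical structure of the ``if and only if'': one must use surjectivity of the Wedderburn map to pass from a condition quantified over all $f,g\in\C S$ to a clean statement about the individual matrix blocks, rather than attempting to verify it function-by-function on $S$. Once that reduction is in place, the matrix-unit counterexample settles the nontrivial direction.
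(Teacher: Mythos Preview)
Your proposal is correct and follows precisely the approach the paper indicates: the paper does not supply a proof, but immediately before the theorem remarks that it ``is simply the restatement of the fact that the Wedderburn isomorphism is a homomorphism,'' which is exactly what you unpack. Your treatment of the ``if and only if'' direction---using surjectivity of the Wedderburn map to reduce to a blockwise statement and then exhibiting the matrix-unit counterexample $E_{12},E_{21}$---is more detailed than anything the paper writes, but it is correct and is the standard argument one would supply here.
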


As with groups, it follows that efficient methods for computing Fourier transforms and inverse Fourier transforms in $\C S$ relative to the {\em same} set of inequivalent, irreducible representations $\X$ of $\C S$ give rise to an efficient method for computing the convolution of functions on $S$. Specifically, to compute the convolution of $f,g\in\C S$, first compute the Fourier transforms $\hat f$ and $\hat g$ of $f$ and $g$, then form the matrix product $\hat f \hat g$, and finally compute the inverse Fourier transform of $\hat f \hat g$.

\subsection{Matrix algebras over group algebras}
\label{SecMatrixAlgsOverGpAlgs}

Let $S$ be a finite inverse semigroup. 
In this section we recall B.\ Steinberg's isomorphism between $\C S$ and a direct sum of matrix algebras over group algebras \cite{Steinberg2}, which extends ideas of Munn and Solomon \cite{Munn1,Munn3,Munn,Solomon}. We will need this result for our proofs of our Fourier inversion formulas in Section \ref{SecFourierInversionThm} and in our approach to fast Fourier inversion in Section \ref{SecFastInversion}. Before we can state his isomorphism we need to review four ideas---the natural partial order on $S$, the groupoid basis of $\C S$, the maximal subgroups of $S$, and Green's $\D$-relation. First we recall the natural partial order on $S$ \cite{Lawson}.

\begin{defn}For $s,t\in S$, we say $t\leq s$ if and only if there exists an idempotent $e\in S$ such that 
$
t=es.
$
\end{defn}
Whenever we refer to a partial order on $S$ we will always mean the natural partial order. 
The idempotents of $R_n$ are the restrictions of the identity mapping, so for $s,t\in R_n$, we have that $t\leq s$ if and only if $s$ extends $t$ as a partial function. If $S$ is a group, then the natural partial order on $S$ is just equality. 

Next we recall Steinberg's groupoid basis, which is the basis used to implement his isomorphism \cite{Steinberg2}. 

\begin{defn}For $s\in S$, define
\[
\ld s \rd = \sum_{t\in S:t\leq s} \mu(t,s)t \in \C S,
\]
where $\mu$ is the M\"obius function of the natural partial order on $S$. The collection $\{\ld s\rd:s\in S\}$ is called the {\em groupoid basis} of $\C S$.
\end{defn}

If $s,t\in R_n$ with $t\leq s$, then it is well known \cite{Stanley, Steinberg2} that 
$
\mu(t,s)=(-1)^{\rk(s)-\rk(t)}.
$

Of course, we may recover the natural basis of $\C S$ by M\"obius inversion---for $s\in S$, in $\C S$ we have
\[
s=\sum_{t\in S:t\leq s}\ld t \rd.
\]
It follows that if $f=\sum_{s\in S}f(s) \in \C S$, then writing $f$ with respect to the groupoid basis, we have $f=\sum_{s\in S}g(s)\ld s\rd$, where
\[
g(s)=\sum_{t\in S:t\geq s}f(t).
\]
Similarly, if $g=\sum_{s\in S}g(s)\ld s \rd \in \C S$, then writing $g$ with respect to the semigroup basis, we have $g=\sum_{s\in S}f(s)s$, where
\[
f(s)=\sum_{t\in S:t\geq s}\mu(s,t) g(t).
\]
We call these changes of basis the {\em zeta transform} and the {\em M\"obius transform} on $\C S$, respectively.

\begin{defn}The {\em zeta transform of a function $f:S\rightarrow \C$} is the collection of coefficients $\{ \sum_{t\in S:t\geq s}f(t):s\in S\}$. The {\em M\"obius transform of a function $f:S\rightarrow \C$} is the collection of coefficients $\{\sum_{t\in S:t\geq s} \mu(s,t) f(s) :s\in S\}$.
\end{defn}
The groupoid basis multiplies in the following manner \cite{Steinberg2}. 
\begin{thm}\label{ThmGroupoidBasisMult}
For $s,t\in S$, in $\C S$ we have
\[
\ld s \rd \ld t \rd = 
\begin{cases}
\ld st \rd &\up{if }s^{-1}s=tt^{-1};\\
0 &\up{otherwise}.
\end{cases}
\]
\end{thm}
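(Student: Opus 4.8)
The plan is to expand both sides in the semigroup basis via the definition $\ld s\rd = \sum_{u\le s}\mu(u,s)u$ and to reduce the general product to a product of two \emph{idempotent} groupoid elements, which is easy to control. The whole argument rests on two lemmas: (i) a factorization identity $\ld s\rd = s\,\ld s^{-1}s\rd = \ld ss^{-1}\rd\,s$, valid for every $s\in S$; and (ii) an orthogonality relation $\ld e\rd\,\ld f\rd = \delta_{e,f}\,\ld e\rd$ for idempotents $e,f$. Granting these, the theorem drops out quickly: writing $\ld s\rd\ld t\rd = (s\ld s^{-1}s\rd)(\ld tt^{-1}\rd t) = s\big(\ld s^{-1}s\rd\ld tt^{-1}\rd\big)t$, lemma (ii) annihilates the middle factor unless $s^{-1}s = tt^{-1}$, giving $0$ in the non-matching case; and when $s^{-1}s = tt^{-1} = e$ it collapses to $s\ld e\rd t = s(\ld tt^{-1}\rd t) = s(t\ld t^{-1}t\rd) = st\ld t^{-1}t\rd = \ld st\rd$, where the last equality is the factorization identity applied to $st$, using $t^{-1}s^{-1}st = t^{-1}et = t^{-1}t$ when $e=tt^{-1}$.

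To prove (i) I would exhibit an order isomorphism between the principal ideals $\{f\in E(S):f\le s^{-1}s\}$ and $\{u\in S:u\le s\}$ given by $f\mapsto sf$, with inverse $u\mapsto s^{-1}u$. The checks that $s^{-1}u$ is an idempotent $\le s^{-1}s$ whenever $u\le s$, that $s(s^{-1}u)=u$ and $s^{-1}(sf)=f$, and that both maps are monotone, are routine manipulations with the commuting of idempotents and the relation $ss^{-1}s=s$; compatibility of the natural partial order with multiplication \cite{Lawson} is what makes the maps order-preserving. Since the isomorphism sends the top element $s^{-1}s$ to the top element $s$, it identifies the interval $[f,s^{-1}s]$ with $[sf,s]$ and so preserves M\"obius values, $\mu(sf,s)=\mu(f,s^{-1}s)$. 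Re-indexing $\ld s\rd=\sum_{u\le s}\mu(u,s)u$ along $u=sf$ then yields $\ld s\rd = \sum_{f\le s^{-1}s}\mu(f,s^{-1}s)\,sf = s\ld s^{-1}s\rd$, and the right-handed identity follows symmetrically from $f\mapsto fs$.

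For (ii), note that the idempotents $E(S)$ form a meet-semilattice under multiplication, and that any element of $S$ lying below an idempotent is itself idempotent, so an interval $[f,e]$ between two idempotents lives entirely in $E(S)$ and the restriction of $\mu$ to $E(S)$ is the M\"obius function of the semilattice. Thus $\{\ld e\rd\}_{e\in E(S)}$ is exactly the system whose orthogonality is the classical M\"obius-algebra identity for a meet-semilattice \cite{Solomon,Steinberg2}. I would prove it directly using the semilattice characters $\phi_e\colon \C E(S)\to\C$ determined by $\phi_e(f)=1$ if $f\ge e$ and $0$ otherwise: each $\phi_e$ is an algebra homomorphism, the defining relation of $\mu$ gives $\phi_e(\ld g\rd)=\sum_{e\le h\le g}\mu(h,g)=\delta_{e,g}$, and since the $\phi_e$ jointly separate points of $\C E(S)$ (their matrix on the $\ld g\rd$ is the identity), multiplicativity $\phi_e(\ld g\rd\ld h\rd)=\delta_{e,g}\delta_{e,h}$ forces $\ld g\rd\ld h\rd=\delta_{g,h}\ld g\rd$.

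The main obstacle is lemma (i): verifying that $f\mapsto sf$ is a genuine order isomorphism of principal ideals that transports the M\"obius function of $E(S)$ onto that of the interval below $s$. Once that identification is in hand, the product $\ld s\rd\ld t\rd$ reduces to the purely semilattice-theoretic orthogonality (ii), and the matching/non-matching dichotomy of the theorem is governed entirely by whether the idempotents $s^{-1}s$ and $tt^{-1}$ coincide. Everything else---idempotents commute, $u\le e$ forces $u$ idempotent, and one-sided multiplication is monotone---is standard inverse-semigroup bookkeeping drawn from \cite{Lawson}.
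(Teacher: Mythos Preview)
The paper does not actually prove Theorem~\ref{ThmGroupoidBasisMult}; it is stated as background and attributed to Steinberg \cite{Steinberg2}. So there is no in-paper proof to compare against.

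Your argument is correct. The two lemmas are valid: for (i), the map $f\mapsto sf$ from $\{f\in E(S):f\le s^{-1}s\}$ to $\{u\in S:u\le s\}$ is indeed an order isomorphism with inverse $u\mapsto s^{-1}u$ (using that $s(s^{-1}s)f=sf$ and $(s^{-1}s)f=f$ when $f\le s^{-1}s$, together with compatibility of $\le$ with multiplication), and this transports M\"obius values as you say, giving $\ld s\rd=s\ld s^{-1}s\rd=\ld ss^{-1}\rd s$. For (ii), the observation that any $u\le e$ with $e$ idempotent is itself idempotent (since $u=fe$ for some idempotent $f$) makes the interval $[f,e]$ in $S$ coincide with the interval in $E(S)$, so the orthogonality is exactly the classical M\"obius-algebra identity for the meet-semilattice $E(S)$; your character argument for that is fine. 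The final assembly is clean, and your check that $(st)^{-1}(st)=t^{-1}(s^{-1}s)t=t^{-1}(tt^{-1})t=t^{-1}t$ when $s^{-1}s=tt^{-1}$ is exactly what is needed to apply (i) to $st$.

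This decomposition into ``factor off an idempotent, then use semilattice orthogonality'' is in fact the approach Steinberg takes in \cite{Steinberg2}, so your proof is in line with the original source the paper cites.
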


To motivate the importance of the groupoid basis, note that there is an alternative model for the composition of partial functions: for $s,t\in R_n$, we could allow the composition $s t$ if and only if the domain of $s$ lines up exactly with the range of $t$. For $s\in R_n$, $s^{-1}s$ is the partial identity on $\dom(s)$ and $ss^{-1}$ is the partial identity on $\ran(s)$, so it follows that for $s,t\in R_n$, in $\C R_n$ we have
\[
\ld s \rd \ld t \rd = 
\begin{cases}
\ld st \rd &\up{if }\dom(s)=\ran(t);\\
0 &\up{otherwise}.
\end{cases}
\]
That is, the multiplication of the groupoid basis of $\C R_n$ encodes this alternate model of partial function composition \cite{Steinberg2}. 
If $A\subseteq \{1,2,\ldots,n\}$, we will identify $A$ with the partial identity on $A$ in $R_n$, so for $s\in R_n$, we will write $s^{-1}s=\dom(s)$ and $ss^{-1}=\ran(s)$. In fact, for any finite inverse semigroup $S$ and $s\in S$, it is customary to write
\begin{equation}
\label{EqDomRanS}
\begin{aligned}
&\dom(s)=s^{-1}s,\\
&\ran(s)=ss^{-1}.
\end{aligned}
\end{equation}
The reason for this is that $S$ is isomorphic to an inverse sub-semigroup of $R_{|S|}$, and if we identify $S$ with an embedding of $S$ in $R_{|S|}$, then for $s\in S$ we have $s^{-1}s=\dom(s)$ and $ss^{-1}=\ran(s)$. On the other hand, the notation in \eqref{EqDomRanS} makes sense even without an embedding of $S$ into $R_{|S|}$. When $S$ is an arbitrary finite inverse semigroup and we make use of this notation, we do so without reference to any particular embedding of $S$ in any rook monoid. Note that, for any $s\in S$, we have $\dom(s^{-1})=\ran(s)$ and $\ran(s^{-1})=\dom(s)$, and if $e\in S$ is idempotent, then $\ran(e)=\dom(e)=e$. 

%

\begin{defn}A subset $G$ of $S$ is said to be a {\em subgroup} of $S$ if $G$ is a group under the operation of $S$. If $G$ is a subgroup of $S$, then $G$ is said to be a {\em maximal subgroup} if $G$ is not properly contained in any other subgroup of $S$.
\end{defn}

Each idempotent $e$ of $S$ is the identity for a unique maximal subgroup of $S$, called the {\em maximal subgroup of $S$ at $e$} \cite{CliffPres}. In fact, denoting the maximal subgroup of $S$ at $e$ by $G_e$, we have \cite{Steinberg2}
$$
G_e = \{s\in S:\dom(s)=\ran(s)=e \}  .
$$
Thus if $e\in R_n$ is idempotent and $\rk(e)=k$, then $G_e$ is isomorphic to $S_k$, the permutation group on $k$ elements.

Finally, we recall Green's $\D$-relation \cite{Green}.

\begin{defn}For $s,t\in S$, we say that $s$ and $t$ are {\em $\D$-related} and we write $s\D t$ if there exists $x\in S$ such that $\dom(x)=\ran(s)$ and $\ran(x)=\ran(t)$.
\end{defn}
$\D$ is an equivalence relation on $S$, and the equivalence classes of $S$ under $\D$ are called the $\D$-classes of $S$. 
We note that if $s,t\in S$ and $\dom(s)=\ran(t)$, then we certainly have that $s\D t$ (by taking $x=s^{-1}$). For $s,t \in R_n$, we have that $s\D t$ if and only if $\rk(s)=\rk(t)$, so $R_n$ has $n+1$ $\D$-classes. Indeed, for $s,t\in R_n$, if $\rk(s)\neq \rk(t)$ then $s$ and $t$ are certainly not $\D$-related. On the other hand, if $\rk(s)=\rk(t)$, then taking $x\in R_n$ to be the unique order-preserving bijection from $\ran(s)$ to $\ran(t)$ shows $s\D t$.

We now describe Steinberg's isomorphism. Let $D_0,\ldots,D_n$ denote the $\D$-classes of $S$, and let $\C D_k$ denote the $\C$-span of $\{\ld s\rd:s\in D_k\}$. By Theorem \ref{ThmGroupoidBasisMult}, as an algebra $\C S=\bigoplus_{k=0}^n \C D_k$. For each $\D$-class $D_k$, fix an idempotent $e_k$, and let $G_k$ denote the maximal subgroup of $S$ at $e_k$. For each idempotent $e\in D_k$, fix an element $p_e\in S$ such that $\dom(p_e)=e_k$ and $\ran(p_e)=e$, taking $p_{e_k}=e_k$. Let $r_k$ denote the number of idempotents in $D_k$. Steinberg gives the following explicit algebra isomorphism from $\C D_k$ to $M_{r_k}(\C G_k)$ \cite{Steinberg2}. 

\begin{thm}\label{ThmSteinbergIsom}
Viewing $r_k\times r_k$ matrices as being indexed by pairs of idempotents in $D_k$, define a map $\Phi:\C D_k\rightarrow M_{r_k}(\C G_k)$ by, for $s\in D_k$,
\[
\Phi(\ld s\rd)={p_{\ran(s)}}^{-1} s p_{\dom(s)} E_{\ran(s),\dom(s)},
\]
where $E_{\ran(s),\dom(s)}$ is the standard $r_k\times r_k$ matrix with a 1 in the $\ran(s),\dom(s)$ position and $0$ elsewhere, and extending linearly to the rest of $\C D_k$.  Then $\Phi$ is an isomorphism, with inverse induced by, for $s\in G_k$,
\[
sE_{e,f} \mapsto \ld p_e s p_f^{-1}\rd.
\]
\end{thm}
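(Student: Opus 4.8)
The plan is to verify directly that $\Phi$ is a well-defined linear map, that it is multiplicative on the groupoid basis, and that the displayed formula gives a two-sided inverse; together these show $\Phi$ is an algebra isomorphism. Everything rests on a short list of elementary inverse-semigroup identities: $xx^{-1}x = x$, hence $\ran(x)\,x = x$ and $x\,\dom(x) = x$; $(ab)^{-1} = b^{-1}a^{-1}$; the fixed elements $p_e$ satisfy $p_ep_e^{-1} = \ran(p_e) = e$ and $p_e^{-1}p_e = \dom(p_e) = e_k$; and the product rule of Theorem~\ref{ThmGroupoidBasisMult}. For well-definedness, note first that for $s\in D_k$ the idempotents $\dom(s) = s^{-1}s$ and $\ran(s) = ss^{-1}$ are $\D$-related to $s$ (take $x=s^{-1}$ and $x=\ran(s)$ respectively in the definition of $\D$), hence lie in $D_k$, so the $p_e$ appearing in $\Phi(\ld s\rd)$ are defined; then with $e=\ran(s)$, $f=\dom(s)$ one computes $\dom(p_e^{-1}sp_f) = p_f^{-1}s^{-1}(p_ep_e^{-1})sp_f = p_f^{-1}\big(s^{-1}\ran(s)s\big)p_f = p_f^{-1}\dom(s)p_f = p_f^{-1}fp_f = e_k$, and likewise $\ran(p_e^{-1}sp_f) = e_k$, so $p_e^{-1}sp_f \in G_k$ and $\Phi$ indeed maps $\C D_k$ into $M_{r_k}(\C G_k)$. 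The same kind of computation shows that for $g\in G_k$ the element $p_egp_f^{-1}$ has range idempotent $e\in D_k$, hence lies in $D_k$, so the proposed inverse map is also well-defined.

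Next I would check the homomorphism property on the groupoid basis. Using $(gE_{a,b})(hE_{c,d}) = \delta_{b,c}(gh)E_{a,d}$ in $M_{r_k}(\C G_k)$, the product $\Phi(\ld s\rd)\Phi(\ld t\rd)$ is $0$ unless $\dom(s) = \ran(t)$, which matches the vanishing in Theorem~\ref{ThmGroupoidBasisMult}. When $\dom(s) = \ran(t)$, the two connectors in the middle collapse via $p_{\dom(s)}p_{\ran(t)}^{-1} = p_{\dom(s)}p_{\dom(s)}^{-1} = \ran(p_{\dom(s)}) = \dom(s) = s^{-1}s$, which is absorbed into $s(s^{-1}s)t = st$; combined with the identities $\ran(st) = s\,\ran(t)\,s^{-1} = ss^{-1} = \ran(s)$ and $\dom(st) = t^{-1}\dom(s)\,t = t^{-1}t = \dom(t)$ (valid because $\ran(t) = \dom(s)$), this yields $\Phi(\ld s\rd)\Phi(\ld t\rd) = p_{\ran(st)}^{-1}(st)p_{\dom(st)}E_{\ran(st),\dom(st)} = \Phi(\ld st\rd) = \Phi(\ld s\rd\ld t\rd)$, as required. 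Incidentally this also shows $st\in D_k$ whenever $\ld s\rd\ld t\rd\neq 0$, consistent with the decomposition $\C S=\bigoplus_k\C D_k$.

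Finally, since $\Phi$ and the claimed inverse $\Psi\colon gE_{e,f}\mapsto \ld p_egp_f^{-1}\rd$ are both linear, it suffices to verify $\Psi\Phi$ and $\Phi\Psi$ act as the identity on basis elements. For $s\in D_k$, $\Psi\Phi(\ld s\rd) = \ld p_{\ran(s)}\big(p_{\ran(s)}^{-1}sp_{\dom(s)}\big)p_{\dom(s)}^{-1}\rd = \ld (p_{\ran(s)}p_{\ran(s)}^{-1})\,s\,(p_{\dom(s)}p_{\dom(s)}^{-1})\rd = \ld \ran(s)\,s\,\dom(s)\rd = \ld s\rd$. For $g\in G_k$ and idempotents $e,f\in D_k$, one first checks $\ran(p_egp_f^{-1}) = e$ and $\dom(p_egp_f^{-1}) = f$, and then $\Phi\Psi(gE_{e,f}) = \big(p_e^{-1}(p_egp_f^{-1})p_f\big)E_{e,f} = \big((p_e^{-1}p_e)\,g\,(p_f^{-1}p_f)\big)E_{e,f} = (e_k\,g\,e_k)E_{e,f} = gE_{e,f}$. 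Hence $\Phi$ is bijective, completing the proof that it is an algebra isomorphism.

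I expect the only real difficulty to be organizational: at each step one must correctly track which idempotent serves as a one-sided identity for which element, and apply $(ab)^{-1} = b^{-1}a^{-1}$ carefully amid the $p_e$, $p_e^{-1}$, $\dom$, and $\ran$ bookkeeping. Conceptually there is nothing subtle. A shortcut for bijectivity would be a dimension count, $\dim\C D_k = |D_k| = r_k^2\,|G_k| = \dim M_{r_k}(\C G_k)$, together with injectivity of $\Phi$; but exhibiting the explicit inverse is cleaner and is needed for later computations, so I would present it directly.
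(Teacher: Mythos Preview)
The paper does not actually prove this theorem; it is stated as a background result quoted from Steinberg \cite{Steinberg2}, with only the brief remark afterward that ``if $s\in D_k$ then ${p_{\ran(s)}}^{-1} s p_{\dom(s)}\in G_k$ by construction.'' Your proposal supplies exactly the direct verification that underlies this citation: well-definedness, multiplicativity on the groupoid basis via Theorem~\ref{ThmGroupoidBasisMult}, and an explicit two-sided inverse. The computations are correct and this is the standard route to the result, so there is nothing in the paper itself to compare against.
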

Note that $p_e\in D_k$ implies ${p_e}^{-1}\in D_k$, and note that if $s\in D_k$ then ${p_{\ran(s)}}^{-1} s p_{\dom(s)}\in G_k$ by construction.
Since $\C S=\bigoplus_{k=0}^n\C D_k$, it follows that
\[
\C S \cong \bigoplus_{k=0}^n M_{r_k}(\C G_k).
\]
As a consequence of Theorem \ref{ThmSteinbergIsom}, we have the following method for generating the irreducible representations of $\C S$ from the irreducible representations of the maximal subgroups of $S$ \cite{Steinberg2}.

\begin{thm}\label{ThmRepInducing} Let $\IRR(G_k)$ be a complete set of inequivalent irreducible representations of $\C G_k$. If $\rho\in \IRR(G_k)$, define the representation $\bar\rho$ of $\C S$ in the following way. First, define $\bar\rho$ on $M_{r_k}(\C G_k)$ by, for $g\in G_k$ and idempotents $a,b\in D_k$,
\[
\bar\rho(gE_{a,b}) = E_{a,b}\otimes \rho(g),
\] 
and extending linearly to the rest of $M_{r_k}(\C G_k)$. Then extend $\bar\rho$ to the rest of $\bigoplus_{k=0}^n M_{r_k}(\C G_k)$ (and hence to $\C S$) by letting $\bar\rho$ be 0 on the other summands. Then $\Y=\{\bar\rho: \rho \in \biguplus_{k=0}^n \IRR(G_k)\}$ is a complete set of inequivalent irreducible representations of $\C S$. 
\end{thm}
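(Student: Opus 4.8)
The plan is to reduce everything to Steinberg's isomorphism (Theorem \ref{ThmSteinbergIsom}) together with the classical representation theory of matrix algebras over group algebras over $\C$. Under the isomorphism $\C S\cong\bigoplus_{k=0}^n M_{r_k}(\C G_k)$, the representation $\bar\rho$ attached to $\rho\in\IRR(G_k)$ is precisely the inflation of $\rho$ from $\C G_k$ to $M_{r_k}(\C G_k)$ (tensoring with the standard action of the matrix-unit algebra), extended by zero on the remaining summands. I would verify in turn that each $\bar\rho$ is (i) a well-defined representation, (ii) irreducible, (iii) pairwise inequivalent as $\bar\rho$ ranges over $\Y$, and (iv) that $\Y$ is complete, the last by a dimension count.

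For (i), I would check that $\bar\rho$ respects products on the matrix-unit basis. Using the multiplication $(gE_{a,b})(hE_{c,d})=\delta_{b,c}\,(gh)E_{a,d}$ in $M_{r_k}(\C G_k)$ and comparing with
\[
(E_{a,b}\otimes\rho(g))(E_{c,d}\otimes\rho(h))=\delta_{b,c}\,E_{a,d}\otimes\rho(gh),
\]
the two agree since $\rho$ is a homomorphism on $\C G_k$; the extension by zero on the other summands is a homomorphism because $\bar\rho$ factors through the algebra projection $\C S\to M_{r_k}(\C G_k)$ onto the $k$th summand, and distinct summands annihilate one another. For (ii), I would compute the image of $\bar\rho$. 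On the $k$th summand it is $\textup{span}\{E_{a,b}\otimes X:X\in\rho(\C G_k)\}=M_{r_k}(\C)\otimes\rho(\C G_k)$. Because $\rho$ is irreducible and $\C$ is algebraically closed, Burnside's theorem gives $\rho(\C G_k)=M_{d_\rho}(\C)$, so the image is all of $M_{r_k}(\C)\otimes M_{d_\rho}(\C)=M_{r_k d_\rho}(\C)$. A representation whose image is the full matrix algebra is irreducible, so $\bar\rho$ is irreducible of dimension $r_k d_\rho$.

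For (iii), I would separate representations by their characters. Since $\chi_{\bar\rho}(gE_{a,b})=\tr(E_{a,b}\otimes\rho(g))=\delta_{a,b}\,\chi_\rho(g)$ and $\bar\rho$ vanishes identically on every summand $\C D_j$ with $j\neq k$, the character $\chi_{\bar\rho}$ is supported on the single summand $\C D_k$ and there records $\chi_\rho$. Hence $\bar\rho$ and $\bar\rho'$ coming from different $\D$-classes have characters with different support and are inequivalent, while within a fixed $D_k$ the character of $\bar\rho$ determines $\chi_\rho$, and inequivalent irreducibles of $G_k$ have distinct characters; thus the members of $\Y$ are pairwise inequivalent. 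Finally, for (iv) I would count dimensions. By Wedderburn's theorem applied to each group algebra, $\sum_{\rho\in\IRR(G_k)}d_\rho^2=|G_k|$, and Steinberg's isomorphism gives $|D_k|=\dim_\C\C D_k=\dim_\C M_{r_k}(\C G_k)=r_k^2|G_k|$. Therefore
\[
\sum_{\bar\rho\in\Y}(\dim\bar\rho)^2=\sum_{k=0}^n r_k^2\sum_{\rho\in\IRR(G_k)}d_\rho^2=\sum_{k=0}^n r_k^2|G_k|=\sum_{k=0}^n|D_k|=|S|.
\]
Since $|S|=\dim_\C\C S$, and any family of pairwise inequivalent irreducible representations whose squared dimensions sum to $\dim_\C\C S$ must exhaust a complete set (by the dimensionality identity $|S|=\sum_{\rho\in\X}d_\rho^2$ attached to Wedderburn's theorem), $\Y$ is complete. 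I expect the only genuinely delicate point to be the irreducibility in (ii): it is essential that $\C$ is algebraically closed so that Burnside's theorem forces $\rho(\C G_k)$ to be the full matrix algebra, for otherwise the inflated image could fail to fill $M_{r_k d_\rho}(\C)$ and $\bar\rho$ need not be irreducible. Everything else is bookkeeping on matrix units and a dimension count, both made routine by Theorem \ref{ThmSteinbergIsom}.
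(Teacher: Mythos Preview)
Your proof is correct and is exactly the natural elaboration of the paper's treatment. The paper does not actually prove this theorem in-line; it presents it ``as a consequence of Theorem~\ref{ThmSteinbergIsom}'' with a citation to Steinberg~\cite{Steinberg2}, and your argument via (i) checking the homomorphism property on matrix units, (ii) irreducibility through Burnside, (iii) inequivalence by character supports, and (iv) completeness by the dimension count $\sum_k r_k^2|G_k|=\sum_k|D_k|=|S|$ is precisely the standard way one fleshes this out from the isomorphism $\C S\cong\bigoplus_k M_{r_k}(\C G_k)$.
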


\begin{defn}
\label{DefInduced}
With notation as in Theorem \ref{ThmRepInducing}, for $\rho \in \IRR(G_k)$, let $\bar\rho$ be the corresponding irreducible representation of $\C S$. We call $\Y=\{\bar \rho:\rho \in \biguplus_{k=0}^n \IRR(G_k)\}$ an {\em induced} set of representations of $\C S$.
\end{defn}

By Theorem \ref{ThmRepInducing}, an induced set of representations of $\C S$ is automatically a complete set of inequivalent, irreducible representations of $\C S$. Throughout the paper we reserve the notation $\Y$ to refer to an induced set of representations of $\C S$, while using the notation $\X$ to refer to an arbitrary complete set of inequivalent, irreducible representations of $\C S$.

\subsection{Evaluating the Fourier transform}
\label{SubSecEvalFT}

Let $S$ be a finite inverse semigroup. 
Let $D_0,\ldots,D_n$ be the $\D$-classes of $S$, let $r_k$ denote the number of idempotents in $D_k$, and choose an idempotent $e_k$ from each $\D$-class $D_k$. For every idempotent $e\in D_k$, fix an element $p_e\in S$ such that $\dom(p_e)=e_k$ and $\ran(p_e)=e$, taking $p_{e_k}=e_k$. Let $G_k$ be the maximal subgroup at $e_k$ and let $\IRR(G_k)$ be a complete set of inequivalent irreducible representations of $\C G_k$. With notation as in Theorem \ref{ThmRepInducing}, let $\Y$ be the induced set of representations of $\C S$ given by $\Y=\{\bar \rho:\rho \in \biguplus_{k=0}^n \IRR(G_k)\}$.

We now recall the main idea from \cite{InvSemiFFT}, which describes the structure of the Fourier transform on $\C S$ relative to $\Y$. Let $f=\sum_{s\in S}f(s)s \in \C S.$ Writing $f$ relative to the groupoid basis of $\C S$, we have
\[
f=\sum_{s\in S}g(s)\ld s \rd,
\]
where $g:S\rightarrow \C$ is the function given by
\[
g(s)=\sum_{t\in S: t\geq s}f(t).
\]
If $\rho\in \IRR(G_k)$, then 
\[
\bar\rho(\ld s \rd)= 
\begin{cases}
E_{\ran(s),\dom(s)}\otimes\rho({p_{\ran(s)}}^{-1} s p_{\dom(s)} ) & \up{if }s\in D_k;\\
0& \up{otherwise.} 
\end{cases}
\]
View $\hat f(\bar \rho)$ as an $r_k \times r_k$ matrix whose rows and columns are indexed by the idempotents in $D_k$ and whose entries are themselves $d_\rho \times d_\rho$ matrices. By Theorem \ref{ThmSteinbergIsom}, for idempotents $a,b\in D_k$ we have
\begin{align*}
\hat f(\bar \rho)_{a,b} &= \sum_{s\in D_k : \ran(s)=a, \dom(s)=b} g(s)\rho({p_{a}}^{-1} s p_b  )\\
&=\sum_{s\in G_k}g(p_a s {p_b}^{-1})\rho(s).
\end{align*}
If we define a function $h_{a,b}:G_k\rightarrow \C$ by, for $s\in G_k$,
\[
h_{a,b}(s)=g({p_a} s p_b^{-1}),
\]
then we see that
\[
\hat f(\bar \rho)_{a,b} = \sum_{s\in G_k} h_{a,b}(s)\rho(s),
\]
the Fourier transform (in $\C G_k$) of $h_{a,b}$ at $\rho$.

Thus, the $a,b$ entry of $\hat f(\bar \rho)$ is a function of the coefficients $$\{g(s):s\in D_k, \ran(s)=a,\dom(s)=b\}.$$ In light of this, in \cite{RookFFT} we proposed the following general approach to the construction of FFTs for $\C S$: To compute the Fourier transform of $f=\sum_{s\in S}f(s)s \in \C S$ relative to an induced set of representations of $\C S$, first compute the change of basis of $f$ to the groupoid basis (that is, compute the zeta transform of $f$), and then for each $\D$-class $D_k$, compute $r_k^2$ group Fourier transforms on $G_k$. 
This gave the following result \cite{InvSemiFFT}.
\begin{thm}
\label{ThmFFTMainResult}
The number of operations required to compute the Fourier transform of an arbitrary $\C$-valued function $f$ on $S$ is no more than
\[
\Cplx(\zeta_S) + \sum_{k=0}^n r_k^2 \T(\IRR(G_k)),
\]
where $\Cplx(\zeta_S)$ is the maximum number of operations required to compute the zeta transform of an arbitrary $\C$-valued function on $S$ and $\T(\IRR(G_k))$ is the maximum number of operations required to compute the Fourier transform of an arbitrary $\C$-valued function on $G_k$ relative to $\IRR(G_k)$.
\end{thm}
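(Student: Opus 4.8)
The plan is to make precise, and then analyze the cost of, the algorithm already sketched in the paragraphs immediately preceding the statement: reduce the computation of the Fourier transform on $\C S$ relative to $\Y$ to one zeta transform on $S$ followed by a controlled family of Fourier transforms on the maximal subgroups $G_k$.

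Fix the induced set $\Y=\{\bar\rho:\rho\in\biguplus_{k=0}^n\IRR(G_k)\}$ together with the choices of idempotents $e_k$ and connecting elements $p_e$ as in Section \ref{SubSecEvalFT}. Given $f=\sum_{s\in S}f(s)s$, the first step is to compute the coefficients $g(s)=\sum_{t\in S:t\geq s}f(t)$ that express $f$ in the groupoid basis; this is exactly the zeta transform of $f$ on $S$, so by definition it costs at most $\Cplx(\zeta_S)$ operations. (This step genuinely mixes $\D$-classes, since $t\geq s$ does not force $t\D s$, which is why a single global zeta transform appears rather than one per $\D$-class.) The second step is, for each $\D$-class $D_k$ and each ordered pair of idempotents $a,b\in D_k$, to form the function $h_{a,b}:G_k\to\C$ given by $h_{a,b}(s)=g(p_a s p_b^{-1})$. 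Because $s\mapsto p_a s p_b^{-1}$ is a bijection from $G_k$ onto $\{t\in D_k:\ran(t)=a,\ \dom(t)=b\}$, this step is pure reindexing of the coefficients $\{g(t):t\in D_k\}$ and uses no arithmetic. The key identity, read off from Theorem \ref{ThmSteinbergIsom} together with the displayed formula $\bar\rho(\ld s\rd)=E_{\ran(s),\dom(s)}\otimes\rho(p_{\ran(s)}^{-1}sp_{\dom(s)})$ for $s\in D_k$ (and $\bar\rho(\ld s\rd)=0$ for $s\notin D_k$), is that, viewing $\hat f(\bar\rho)$ as an $r_k\times r_k$ block matrix, its $(a,b)$ block equals $\sum_{s\in G_k}h_{a,b}(s)\rho(s)$, i.e.\ the Fourier transform of $h_{a,b}$ on $\C G_k$ at $\rho$.

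The third step is then, for each $D_k$ and each of the $r_k^2$ pairs $(a,b)$, to compute the Fourier transform of $h_{a,b}$ on $\C G_k$ relative to $\IRR(G_k)$, at a cost of at most $\T(\IRR(G_k))$ operations apiece, hence at most $r_k^2\T(\IRR(G_k))$ operations for $D_k$. Assembling the blocks $\hat f(\bar\rho)_{a,b}$ over all $a,b$ yields $\hat f(\bar\rho)$, and doing this for each $\rho\in\IRR(G_k)$ and each $k$ yields $\hat f(\bar\rho)$ for every $\bar\rho\in\Y$; this assembly is again bookkeeping. Summing the cost of the single zeta transform and the $\sum_{k=0}^n r_k^2$ group Fourier transforms gives the asserted bound $\Cplx(\zeta_S)+\sum_{k=0}^n r_k^2\T(\IRR(G_k))$.

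The one place requiring genuine care — and the only place anything could go wrong — is the block identity of the second step: one must verify that $\bar\rho$ annihilates $\C D_j$ for $j\neq k$, that it acts on $\C D_k$ by $\ld s\rd\mapsto E_{\ran(s),\dom(s)}\otimes\rho(p_{\ran(s)}^{-1}sp_{\dom(s)})$, and that under the block-matrix identification of $\hat f(\bar\rho)$ the $(a,b)$ entry indeed collects precisely the coefficients $g(s)$ with $\ran(s)=a$ and $\dom(s)=b$, reindexed over $G_k$ via $s\mapsto p_a s p_b^{-1}$. Once that is established the complexity bookkeeping is immediate, since every arithmetic operation in the algorithm has been charged either to the global zeta transform or to exactly one of the group Fourier transforms, with no operations spent on the reindexing or the final assembly.
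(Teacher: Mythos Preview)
Your proof is correct and follows exactly the approach sketched in the paper's Section \ref{SubSecEvalFT} (the theorem itself is cited from \cite{InvSemiFFT} rather than proved anew here). You have simply made the operation count explicit: one zeta transform, then $r_k^2$ group Fourier transforms on each $G_k$, with the block identity $\hat f(\bar\rho)_{a,b}=\widehat{h_{a,b}}(\rho)$ justifying that this really computes all of $\hat f$ relative to $\Y$.
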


\section{Fourier inversion formulas for finite inverse semigroups}
\label{SecFourierInversionThm}

In this section we give a series of Fourier inversion theorems for arbitrary finite inverse semigroups (Theorems \ref{FInvThm}--\ref{ThmSemigpBasisInvThm}). Theorems \ref{FInvThm} and \ref{FInvThm2} are generalizations of Fourier inversion theorems proved for the rook monoid in \cite{RookFFT}, while Theorems \ref{FInvThm3} and \ref{ThmSemigpBasisInvThm} are new. We begin by recalling the Fourier inversion theorem for finite groups \cite[Section 6.2]{Serre}.

\begin{thm}
\label{FInvGrp}
Let $G$ be a finite group, and let $f=\sum_{s\in G}f(s)s \in \C G$. Let $\IRR(G)$ be a complete set of inequivalent, irreducible matrix representations of $\C G$. Then
\[
f(s) = \frac{1}{|G|} \sum_{\rho \in \IRR(G)} d_\rho \tr \left( \hat f(\rho)\rho(s^{-1}) \right).
\]
\end{thm}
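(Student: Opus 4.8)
The plan is to substitute the definition of $\hat f(\rho)$ into the right-hand side and reduce the claim to the orthogonality relations for the characters of $G$. First I would use the linearity of the trace together with $\rho(t)\rho(s^{-1})=\rho(ts^{-1})$ to rewrite
\[
\frac{1}{|G|}\sum_{\rho\in\IRR(G)} d_\rho \tr\!\left(\hat f(\rho)\rho(s^{-1})\right)
=\frac{1}{|G|}\sum_{t\in G} f(t)\sum_{\rho\in\IRR(G)} d_\rho\,\chi_\rho(ts^{-1}),
\]
where $\chi_\rho=\tr\circ\,\rho$ denotes the character of $\rho$. So the theorem will follow once the inner sum is identified.

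The key step is the identity
\[
\sum_{\rho\in\IRR(G)} d_\rho\,\chi_\rho(g)=\begin{cases} |G| & \text{if } g=e,\\ 0 & \text{otherwise,}\end{cases}
\qquad g\in G.
\]
I would prove this by recalling that the left regular representation of $G$ on $\C G$ decomposes, by Maschke's theorem together with the Wedderburn decomposition of $\C G$ (the group case of the structure theory recalled in Section~\ref{SecMatrixAlgsOverGpAlgs}), as $\bigoplus_{\rho\in\IRR(G)}\rho^{\oplus d_\rho}$; hence its character is $\sum_{\rho} d_\rho\chi_\rho$. On the other hand, the character of the regular representation evaluated at $g$ counts the basis elements $t\in G$ fixed by left multiplication by $g$, i.e.\ those $t$ with $gt=t$, which is $|G|$ when $g=e$ and $0$ otherwise. (Alternatively the same identity drops out of the Schur orthogonality relations $\sum_{g\in G}\rho(g)_{ij}\,\sigma(g^{-1})_{kl}=\tfrac{|G|}{d_\rho}\delta_{\rho\sigma}\delta_{il}\delta_{jk}$ after summing over the diagonal entries $i=j$, $k=l$ and over $\rho$.)

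Granting this identity, I would put $g=ts^{-1}$: every term of the inner sum vanishes except the one with $t=s$, which contributes $|G|$. The right-hand side therefore collapses to $\frac{1}{|G|}\cdot f(s)\cdot|G|=f(s)$, as claimed. The only real content here is the orthogonality identity for $\sum_\rho d_\rho\chi_\rho$, and everything else is bookkeeping; since the paper merely recalls this classical fact (citing \cite{Serre}), I would keep the argument short and base it on the regular-representation computation, which makes the role of the dimensions $d_\rho$ transparent.
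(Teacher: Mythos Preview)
Your argument is correct and is the standard derivation of the group Fourier inversion formula from the character of the regular representation (equivalently, from Schur orthogonality). Note, however, that the paper does not actually prove this statement: it merely recalls it as a classical fact with a citation to \cite[Section 6.2]{Serre}, so there is no in-paper proof to compare against.
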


Now, let $S$ be a finite inverse semigroup and let notation be as in Section \ref{SubSecEvalFT}.
Here is our first inversion theorem, which expresses Fourier inversion relative to $\Y$ in terms of the groupoid basis.

\begin{thm}
\label{FInvThm}
Let $g = \sum_{s\in S}g(s)\ld s \rd \in \C S$, and let $s\in D_k$. Let $y\in G_k$ be the element defined by
\[
y={p_{\ran(s)}}^{-1}sp_{\dom(s)}.
\]
For $\bar\rho \in \Y$, view $\hat g(\bar\rho)$ as an $r_k\times r_k$ matrix whose rows and columns are indexed by the idempotents in $D_k$ and whose entries are themselves $d_\rho \times d_\rho$ matrices. For idempotents $a,b\in D_k$, denote the $a,b$ entry of $\hat g(\bar\rho)$ (itself a $d_\rho \times d_\rho$ matrix) by $\hat g(\bar \rho)_{a,b}$. Then
\[
g(s) = \frac{1}{|G_k|} \sum_{\rho \in \IRR(G_k)} d_\rho \tr
\left(
\hat g(\bar \rho)_{\ran(s),\dom(s)} \rho(y^{-1})
\right).
\]
\end{thm}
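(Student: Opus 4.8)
The plan is to reduce the inversion problem on $\C S$ relative to $\Y$ to the classical group Fourier inversion formula (Theorem \ref{FInvGrp}) on each maximal subgroup $G_k$, by exploiting Steinberg's isomorphism $\Phi:\C D_k\rightarrow M_{r_k}(\C G_k)$ of Theorem \ref{ThmSteinbergIsom} together with the explicit description of the induced representations $\bar\rho$ in Theorem \ref{ThmRepInducing}. The key observation is that the data $\{\hat g(\bar\rho):\rho\in\IRR(G_k)\}$, restricted to the $a,b$ entry, is exactly the collection of group Fourier transforms at $\rho$ of the function $h_{a,b}:G_k\rightarrow\C$ given by $h_{a,b}(x)=g(p_a x p_b^{-1})$, as worked out in Section \ref{SubSecEvalFT}. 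So once we fix the pair of idempotents $(a,b)=(\ran(s),\dom(s))$, the element $s\in D_k$ corresponds under $\Phi$ to $y\in G_k$ with $h_{a,b}(y)=g(s)$, and the desired identity is nothing but Theorem \ref{FInvGrp} applied to $h_{a,b}$ at the point $y$.

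Concretely, first I would fix $s\in D_k$, set $a=\ran(s)$, $b=\dom(s)$, and $y={p_a}^{-1}sp_b\in G_k$ (noting $y\in G_k$ by the construction in Theorem \ref{ThmSteinbergIsom}). Next I would recall from Section \ref{SubSecEvalFT} that $\hat g(\bar\rho)_{a,b}=\sum_{x\in G_k}h_{a,b}(x)\rho(x)=\widehat{h_{a,b}}(\rho)$, the $\C G_k$-Fourier transform of $h_{a,b}$ at $\rho$; for completeness I would re-derive this from $\bar\rho(\ld t\rd)=E_{\ran(t),\dom(t)}\otimes\rho({p_{\ran(t)}}^{-1}tp_{\dom(t)})$ for $t\in D_k$ (and $0$ off $D_k$), reading off the $(a,b)$-block of $\hat g(\bar\rho)=\sum_{t\in S}g(t)\bar\rho(\ld t\rd)$ as a sum over $t\in D_k$ with $\ran(t)=a$, $\dom(t)=b$, then reindexing $t=p_a x p_b^{-1}$ over $x\in G_k$. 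Then I would apply Theorem \ref{FInvGrp} to $h_{a,b}\in\C G_k$ at the element $y$, giving
\[
h_{a,b}(y)=\frac{1}{|G_k|}\sum_{\rho\in\IRR(G_k)}d_\rho\tr\left(\widehat{h_{a,b}}(\rho)\rho(y^{-1})\right)
=\frac{1}{|G_k|}\sum_{\rho\in\IRR(G_k)}d_\rho\tr\left(\hat g(\bar\rho)_{a,b}\rho(y^{-1})\right).
\]
Finally I would note $h_{a,b}(y)=g(p_a y p_b^{-1})=g(p_a {p_a}^{-1}sp_b p_b^{-1})=g(s)$, using that $p_a{p_a}^{-1}=\ran(p_a)=a=\ran(s)$ and ${p_b}^{-1}p_b$... wait, the relevant identity is $p_b p_b^{-1}=\ran(p_b)=\dom(p_b)$? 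No: $\ran(p_b)=b=\dom(s)$, and one checks $p_a({p_a}^{-1}s p_b)p_b^{-1}=(p_a{p_a}^{-1})s(p_b p_b^{-1})=\ran(p_a)\,s\,\ran(p_b)=a s b=\ran(s)\,s\,\dom(s)=s$, since $\ran(s)s=ss^{-1}s=s$ and $s\dom(s)=ss^{-1}s=s$. This substitution closes the argument.

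The main obstacle I anticipate is purely bookkeeping rather than conceptual: one must be careful that the indexing conventions for the block matrix $\hat g(\bar\rho)$ (rows/columns indexed by idempotents of $D_k$, entries $d_\rho\times d_\rho$ matrices) match exactly the way $\bar\rho$ is defined on $M_{r_k}(\C G_k)$ via $gE_{a,b}\mapsto E_{a,b}\otimes\rho(g)$, so that the $(a,b)$-entry of $\bar\rho(\ld t\rd)$ is genuinely $\rho({p_a}^{-1}tp_b)$ when $\ran(t)=a,\dom(t)=b$ and $0$ otherwise. A secondary point requiring a word of justification is that the reindexing $t\mapsto x={p_a}^{-1}tp_b$ is a bijection from $\{t\in D_k:\ran(t)=a,\dom(t)=b\}$ onto $G_k$, with inverse $x\mapsto p_a x p_b^{-1}$; this follows from the construction of the $p_e$ and the computation of ranges and domains just indicated. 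Once these conventions are pinned down, the theorem is an immediate specialization of the group inversion formula.
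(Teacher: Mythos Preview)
Your proposal is correct and follows essentially the same route as the paper's proof: both identify the $(\ran(s),\dom(s))$-block of $\hat g(\bar\rho)$ as the group Fourier transform $\widehat{h_{a,b}}(\rho)$ of the function $h_{a,b}(x)=g(p_a x p_b^{-1})$ on $G_k$, apply the group inversion formula (Theorem \ref{FInvGrp}) to $h_{a,b}$ at $y$, and then verify $h_{a,b}(y)=g(s)$ via $p_a y p_b^{-1}=\ran(s)\,s\,\dom(s)=s$. The only differences are notational (the paper writes $g_{\ran(s),\dom(s)}$ for your $h_{a,b}$) and cosmetic in the order of the verification that $p_{\ran(s)} y p_{\dom(s)}^{-1}=s$.
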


\begin{proof} 
For $\rho \in \IRR(G_k)$ we have 
\[
\hat g(\bar \rho) = \sum_{x\in S}g(x)\bar\rho(\ld x \rd),
\]
with $\bar \rho(\ld x\rd) = 0$ if $x \notin D_k$. As in Section \ref{SubSecEvalFT}, the $\ran(s),\dom(s)$ entry of $\hat g(\bar \rho)$ is determined by the values $g(x)$ for which $\dom(x)=\dom(s)$ and $\ran(x)=\ran(s)$ (and the values $g(x)$ for such $x$ do not affect any of the other entries of $\hat g(\bar \rho)$), and if we define a function $g_{\ran(s),\dom(s)}: G_k\rightarrow \C$ by
\[
g_{\ran(s),\dom(s)}(x) = g(p_{\ran(s)} x {p_{\dom(s)}}^{-1}),
\]
then 
\begin{align*}
\hat g(\bar\rho)_{\ran(s),\dom(s)} &= \sum_{x\in G_k} g(p_{\ran(s)} x {p_{\dom(s)}}^{-1}) \rho(x) \\
&=\sum_{x\in G_k}g_{\ran(s),\dom(s)}(x)\rho(x)\\
&=\hat g_{\ran(s),\dom(s)} (\rho),
\end{align*}
the Fourier transform of $g_{\ran(s),\dom(s)}$ at $\rho$ in $\C G$.

Note that $s=p_{\ran(s)} y {p_{\dom(s)}}^{-1},$ because
\begin{align*}
s &= ss^{-1}ss^{-1}s\\
&= \ran(s)  s  \dom(s)\\
&= p_{\ran(s)}{p_{\ran(s)}}^{-1} s p_{\dom(s)} {p_{\dom(s)}}^{-1}\\
&= p_{\ran(s)} y {p_{\dom(s)}}^{-1}.
\end{align*}

The Fourier inversion theorem for groups applies to $g_{\ran(s),\dom(s)}$, and yields 
\begin{align*}
g(s) &= 
g(p_{\ran(s)} y {p_{\dom(s)}}^{-1}) \\ &= g_{\ran(s),\dom(s)}(y) \\&= 
\frac{1}{|G_k|} \sum_{\rho \in \IRR(G_k)} d_\rho \tr \left( \hat{g}_{\ran(s),\dom(s)}(\rho) \rho(y^{-1}) \right),
\end{align*}
and since
\[
\hat{g}_{\ran(s),\dom(s)}(\rho) =
\hat g(\bar \rho)_{\ran(s),\dom(s)}
,\]
we are done.
\end{proof}

Now, let ${\X}$ be any set of inequivalent, irreducible matrix representations for $\C S$. Here is our next inversion theorem, which expresses Fourier inversion relative to ${\X}$ in terms of the groupoid basis. 

\begin{thm}
\label{FInvThm2}
Let $g = \sum_{s\in S}g(s)\ld s \rd \in \C S.$
Let $s\in D_k$. For $\rho \in \IRR(G_k)$, let $\bar\rho\in \Y$ denote the corresponding induced representation of $\C S$, which is equivalent to some representation $\widetilde \rho \in \X$. 
Then
\[
g(s) = \frac{1}{|G_k|} \sum_{\rho \in \IRR(G_k)}d_\rho \tr
\left(
\hat g (\widetilde\rho) \widetilde\rho(\ld s^{-1}\rd)
\right).
\]
\end{thm}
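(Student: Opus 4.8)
The plan is to deduce Theorem~\ref{FInvThm2} from Theorem~\ref{FInvThm} by computing $\bar\rho(\ld s^{-1}\rd)$ explicitly and evaluating the trace of $\hat g(\widetilde\rho)\widetilde\rho(\ld s^{-1}\rd)$ directly. First I would dispose of the passage from $\bar\rho\in\Y$ to an equivalent $\widetilde\rho\in\X$. Equivalence of representations of $\C S$ means there is a single invertible matrix $T$ with $\widetilde\rho(a)=T\bar\rho(a)T^{-1}$ for all $a\in\C S$; hence $\hat g(\widetilde\rho)=T\hat g(\bar\rho)T^{-1}$ and $\widetilde\rho(\ld s^{-1}\rd)=T\bar\rho(\ld s^{-1}\rd)T^{-1}$, so $\hat g(\widetilde\rho)\widetilde\rho(\ld s^{-1}\rd)$ is conjugate to $\hat g(\bar\rho)\bar\rho(\ld s^{-1}\rd)$ and the two have the same trace. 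Thus it suffices to show
\[
\tr\!\left(\hat g(\bar\rho)\,\bar\rho(\ld s^{-1}\rd)\right)=\tr\!\left(\hat g(\bar\rho)_{\ran(s),\dom(s)}\,\rho(y^{-1})\right),
\]
since the right-hand side is exactly the summand appearing in Theorem~\ref{FInvThm}.

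For the main computation, note that $s\in D_k$ forces $s^{-1}\in D_k$ with $\ran(s^{-1})=\dom(s)$ and $\dom(s^{-1})=\ran(s)$, so the formula for $\bar\rho$ on the groupoid basis recalled in Section~\ref{SubSecEvalFT} gives $\bar\rho(\ld s^{-1}\rd)=E_{\dom(s),\ran(s)}\otimes\rho\!\left({p_{\dom(s)}}^{-1}s^{-1}p_{\ran(s)}\right)$. Using $(abc)^{-1}=c^{-1}b^{-1}a^{-1}$ in the inverse semigroup, ${p_{\dom(s)}}^{-1}s^{-1}p_{\ran(s)}=\left({p_{\ran(s)}}^{-1}sp_{\dom(s)}\right)^{-1}=y^{-1}$ in $G_k$, so $\bar\rho(\ld s^{-1}\rd)=E_{\dom(s),\ran(s)}\otimes\rho(y^{-1})$. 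Writing $\hat g(\bar\rho)=\sum_{a,b}E_{a,b}\otimes\hat g(\bar\rho)_{a,b}$, with the sum over idempotents $a,b\in D_k$, and applying the block-multiplication rule $(E_{a,b}\otimes M)(E_{c,d}\otimes N)=\delta_{b,c}\,E_{a,d}\otimes MN$ together with $\tr(E_{a,d}\otimes P)=\delta_{a,d}\tr(P)$, the product $\hat g(\bar\rho)\bar\rho(\ld s^{-1}\rd)$ collapses to $\sum_{a}E_{a,\ran(s)}\otimes\hat g(\bar\rho)_{a,\dom(s)}\rho(y^{-1})$, whose trace is $\tr\!\left(\hat g(\bar\rho)_{\ran(s),\dom(s)}\rho(y^{-1})\right)$. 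Summing over $\rho\in\IRR(G_k)$ with the weights $d_\rho/|G_k|$ and invoking Theorem~\ref{FInvThm} then yields the claimed formula.

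The argument is essentially bookkeeping with the block/tensor structure of $\hat g(\bar\rho)$, so I do not anticipate a genuine obstacle. The only points requiring care are keeping the roles of $\ran(s)$ and $\dom(s)$ straight when passing to $s^{-1}$, correctly applying the anti-homomorphism property of inversion to identify ${p_{\dom(s)}}^{-1}s^{-1}p_{\ran(s)}$ with $y^{-1}$, and using the equivalence $\widetilde\rho\sim\bar\rho$ at the level of $\C S$-algebra homomorphisms so that one intertwiner $T$ simultaneously conjugates $\hat g(\bar\rho)$ and $\bar\rho(\ld s^{-1}\rd)$.
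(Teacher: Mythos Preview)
Your proposal is correct and follows essentially the same route as the paper's proof: reduce from $\widetilde\rho$ to $\bar\rho$ by similarity invariance of trace, identify $\bar\rho(\ld s^{-1}\rd)=E_{\dom(s),\ran(s)}\otimes\rho(y^{-1})$, observe that the trace of $\hat g(\bar\rho)\bar\rho(\ld s^{-1}\rd)$ picks out exactly $\tr\bigl(\hat g(\bar\rho)_{\ran(s),\dom(s)}\rho(y^{-1})\bigr)$, and then invoke Theorem~\ref{FInvThm}. The only difference is cosmetic: you spell out the block-multiplication and trace collapse more explicitly than the paper does.
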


\begin{proof}
Since $\widetilde\rho$ is equivalent to $\bar \rho$, we have $\bar \rho = A^{-1} \widetilde\rho A$
for some invertible matrix $A$. It follows that
\[
\hat g(\bar \rho) = A^{-1}\hat g(\widetilde\rho) A.
\]
As in Theorem \ref{FInvThm}, let $y\in G_k$ be the element defined by 
$y={p_{\ran(s)}}^{-1}sp_{\dom(s)}.$ Then
\begin{align*}
\tr\left(
\hat g(\bar \rho)_{\ran(s),\dom(s)} \rho(y^{-1})
\right)
=&
\tr \left(
\hat g(\bar\rho) \left(E_{\dom(s),\ran(s)}\otimes \rho(y^{-1})\right)
\right) \\ 
=&
\tr \left(
\hat g(\bar \rho)\bar\rho(\ld s^{-1}\rd)
\right) \\ 
=&
\tr \left(\left(
A^{-1}\hat g(\widetilde\rho) A\right) \left( A^{-1} \widetilde\rho(\ld s^{-1}\rd)A\right)
\right) \\ 
=&
\tr \left(
A^{-1}\hat g(\widetilde\rho)\widetilde\rho(\ld s^{-1}\rd)A 
\right) \\ 
=&
\tr \left(
\hat g(\widetilde\rho)\widetilde\rho(\ld s^{-1}\rd)
\right),
\end{align*}
where the last equality follows from the similarity-invariance of trace. The theorem now follows from Theorem \ref{FInvThm}.
\end{proof}

Our next inversion theorem also expresses Fourier inversion relative to $\X$ in terms of the groupoid basis, but does so without reference to the $\IRR(G_k)$.

\begin{thm}
\label{FInvThm3}
Let 
$
g = \sum_{s\in S}g(s)\ld s \rd \in \C S.
$
Let $s\in D_k$. Then
\[
g(s) = \frac{1}{r_k|G_k|} \sum_{\rho \in \X} d_{\rho} \tr
\left(
\hat g (\rho) \rho(\ld s^{-1}\rd)
\right).
\]
\end{thm}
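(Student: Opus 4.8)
The plan is to reduce Theorem~\ref{FInvThm3} to Theorem~\ref{FInvThm2} by summing over a choice of $\IRR(G_k)$ and exploiting the block structure of the induced representations together with the similarity-invariance of trace. Fix a complete set of inequivalent irreducible representations $\IRR(G_k)$ of each $\C G_k$, and let $\Y = \{\bar\rho : \rho \in \biguplus_k \IRR(G_k)\}$ be the associated induced set, as in Definition~\ref{DefInduced}. By Wedderburn applied to $\C S$ (and the fact that both $\X$ and $\Y$ are complete sets of inequivalent irreducibles), each $\bar\rho \in \Y$ is equivalent to exactly one $\widetilde\rho \in \X$, and this sets up a bijection $\Y \leftrightarrow \X$ under which $d_{\bar\rho} = d_{\widetilde\rho}$ and $\hat g(\widetilde\rho)\widetilde\rho(\ld s^{-1}\rd)$ is similar to $\hat g(\bar\rho)\bar\rho(\ld s^{-1}\rd)$, so their traces agree. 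Thus
\[
\sum_{\rho \in \X} d_\rho \tr\!\left(\hat g(\rho)\rho(\ld s^{-1}\rd)\right)
= \sum_{k=0}^n \sum_{\rho \in \IRR(G_k)} d_{\bar\rho}\, \tr\!\left(\hat g(\bar\rho)\bar\rho(\ld s^{-1}\rd)\right).
\]

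Next I would observe that, for $s \in D_k$, the representation $\bar\rho$ vanishes on all $\D$-summands other than $\C D_k$, so only the $k$-th inner sum contributes; and for $\rho \in \IRR(G_k)$ we have $d_{\bar\rho} = r_k\, d_\rho$, since $\bar\rho(gE_{a,b}) = E_{a,b}\otimes\rho(g)$ has dimension $r_k d_\rho$. The right-hand side therefore collapses to
\[
\sum_{\rho \in \IRR(G_k)} r_k\, d_\rho\, \tr\!\left(\hat g(\bar\rho)\bar\rho(\ld s^{-1}\rd)\right).
\]
Now I would identify $\tr(\hat g(\bar\rho)\bar\rho(\ld s^{-1}\rd))$ with the quantity appearing in Theorem~\ref{FInvThm2}: in the proof of Theorem~\ref{FInvThm2} it is shown that $\tr(\hat g(\bar\rho)\bar\rho(\ld s^{-1}\rd)) = \tr(\hat g(\bar\rho)_{\ran(s),\dom(s)}\,\rho(y^{-1}))$ where $y = {p_{\ran(s)}}^{-1}s\,p_{\dom(s)} \in G_k$, using that $\bar\rho(\ld s^{-1}\rd) = E_{\dom(s),\ran(s)}\otimes\rho(y^{-1})$. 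Since the chain of trace equalities there passes through $\tr(\hat g(\widetilde\rho)\widetilde\rho(\ld s^{-1}\rd))$, I can equally well phrase the present computation directly with $\rho \in \X$ and $\widetilde\rho$; the point is that $\sum_{\rho\in\IRR(G_k)} d_\rho \tr(\hat g(\bar\rho)_{\ran(s),\dom(s)}\rho(y^{-1})) = |G_k|\, g(s)$ by Theorem~\ref{FInvThm}.

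Putting the pieces together, the full sum over $\X$ equals $r_k \cdot |G_k|\, g(s)$, and dividing by $r_k |G_k|$ gives the claimed formula. The only genuinely substantive point is the dimension bookkeeping $d_{\bar\rho} = r_k d_\rho$ together with the vanishing of $\bar\rho$ off $\C D_k$ — i.e.\ making sure the factor of $r_k$ that distinguishes this statement from Theorem~\ref{FInvThm2} is accounted for correctly — and carefully justifying that passing from $\X$ to the equivalent induced representations does not change any trace. Neither step is hard; the main care is simply in not double-counting across $\D$-classes. An alternative, essentially equivalent, route would be to prove it directly from Theorem~\ref{FInvThm2} by noting $\tr(\hat g(\widetilde\rho)\widetilde\rho(\ld s^{-1}\rd))$ is independent of which $\widetilde\rho \in \X$ is equivalent to $\bar\rho$, then summing Theorem~\ref{FInvThm2} and dividing by $r_k$; I would likely present whichever is shorter.
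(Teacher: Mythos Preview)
Your proposal is correct and is essentially the paper's own argument: set up the bijection $\X\leftrightarrow\Y$ by equivalence, use similarity-invariance of trace, observe that only the terms with $\rho'\in\IRR(G_k)$ survive because $\bar\rho(\ld s^{-1}\rd)=0$ otherwise, replace $d_{\bar\rho}$ by $r_k d_{\rho'}$, and invoke Theorem~\ref{FInvThm2}. The paper does this in three lines by starting from the $\X$ side and immediately pulling out the factor $d_{\rho'}=d_\rho/r_k$, whereas you expand things a bit more and trace the reduction back to Theorem~\ref{FInvThm}; but the content is the same.
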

\begin{proof}
If $\rho\in \X$, then $\rho$ is equivalent to some representation $\bar\rho\in\Y$, which was induced by some representation $\rho'\in \IRR(G_j)$ for some $j\in \{0,1,\ldots,n\}$.

Notice that since $s\in D_k$, we also have $s^{-1}\in D_k$, and thus $\bar\rho(\ld s^{-1} \rd)$ is 0 unless $\rho' \in \IRR(G_k)$. It follows that $\rho(\ld s^{-1} \rd)$ is 0 unless $\rho' \in \IRR(G_k)$. If $\rho' \in \IRR(G_k)$, then we have $d_{\rho} = d_{\bar\rho} = r_k d_{\rho'}$, so $d_{\rho'}=d_{\rho}/r_k$. The theorem now follows from Theorem \ref{FInvThm2}.
\end{proof}

Finally, our last inversion theorem expresses Fourier inversion relative to $\X$ in terms of the semigroup basis.

\begin{thm}
\label{ThmSemigpBasisInvThm}
Let $f=\sum_{s\in S}f(s)s \in \C S$. For $t\in D_j$, let $G(t)$ denote $G_j$ (the maximal subgroup at $e_j$), and let $r(t)$ denote $r_j$ (the number of idempotents in $D_j$). Then for any $s\in S$ we have
\[
f(s)
=\sum_{\rho\in \X} d_\rho  
\sum_{t\in S:t\geq s} \frac{\mu(s,t)}{r(t)|G(t)|}
\sum_{v\in S:v^{-1}\leq t^{-1}} \mu(v^{-1},t^{-1}) \tr\left( \hat f(\rho) \rho(v^{-1})  \right).
\]
\end{thm}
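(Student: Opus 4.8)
The plan is to obtain this semigroup-basis inversion formula from Theorem~\ref{FInvThm3} (inversion relative to $\X$ in the groupoid basis) by post-composing with the M\"obius transform. The key observation is that if $g(s)=\sum_{t\in S:t\geq s}f(t)$, then $f=\sum_{s\in S}f(s)s$ and $g=\sum_{s\in S}g(s)\ld s\rd$ are \emph{the same} element of $\C S$, merely written in two different bases, so $\hat f(\rho)=\rho(f)=\rho(g)=\hat g(\rho)$ for every representation $\rho$; and the M\"obius inversion relation recalled in Section~\ref{SecMatrixAlgsOverGpAlgs} reads $f(s)=\sum_{t\in S:t\geq s}\mu(s,t)\,g(t)$.

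So first I would fix $s\in S$ and use the M\"obius relation to write $f(s)=\sum_{t\geq s}\mu(s,t)g(t)$. For each $t$ with $t\geq s$, say $t\in D_j$ (with $j$ depending on $t$; this is exactly why the statement uses the $\D$-class-dependent quantities $G(t)=G_j$ and $r(t)=r_j$ in place of fixed $G_k,r_k$), Theorem~\ref{FInvThm3} gives
\[
g(t)=\frac{1}{r(t)\,|G(t)|}\sum_{\rho\in\X}d_\rho\,\tr\!\left(\hat g(\rho)\,\rho(\ld t^{-1}\rd)\right).
\]
Next I would expand $\rho(\ld t^{-1}\rd)$ using the definition of the groupoid basis: $\ld t^{-1}\rd=\sum_{w\in S:\,w\leq t^{-1}}\mu(w,t^{-1})\,w$, and since $v\mapsto v^{-1}$ is a bijection of $S$ this reindexes as $\ld t^{-1}\rd=\sum_{v\in S:\,v^{-1}\leq t^{-1}}\mu(v^{-1},t^{-1})\,v^{-1}$. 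Applying $\rho$, using linearity of trace, and replacing $\hat g(\rho)$ by $\hat f(\rho)$ yields
\[
\tr\!\left(\hat g(\rho)\,\rho(\ld t^{-1}\rd)\right)=\sum_{v\in S:\,v^{-1}\leq t^{-1}}\mu(v^{-1},t^{-1})\,\tr\!\left(\hat f(\rho)\,\rho(v^{-1})\right).
\]
Substituting this into $f(s)=\sum_{t\geq s}\mu(s,t)g(t)$, interchanging the finite sums to bring $\sum_{\rho\in\X}$ to the outside, and collecting the scalar factors $d_\rho$, $\mu(s,t)/(r(t)|G(t)|)$, and $\mu(v^{-1},t^{-1})$ produces precisely the claimed identity.

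There is no genuine obstacle here; the only point that requires attention is the $\D$-class bookkeeping. As $t$ ranges over the elements $\geq s$ it will in general visit $\D$-classes other than the one containing $s$, so Theorem~\ref{FInvThm3} must be invoked with the idempotent count and maximal subgroup attached to $t$'s own $\D$-class, which is what produces the factor $r(t)|G(t)|$ rather than a fixed $r_k|G_k|$. One should also note, exactly as in the proof of Theorem~\ref{FInvThm3}, that for any $\rho\in\X$ induced from a maximal subgroup lying in a $\D$-class different from that of $t$ one has $\rho(\ld t^{-1}\rd)=0$ (since $t^{-1}$ is in the same $\D$-class as $t$), so the displayed formula for $g(t)$ is internally consistent and no spurious cross-terms appear when the sums are reordered.
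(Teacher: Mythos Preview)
Your proposal is correct and follows essentially the same route as the paper: apply Theorem~\ref{FInvThm3} to obtain $g(t)$, expand $\rho(\ld t^{-1}\rd)$ via the definition of the groupoid basis, use $\hat g(\rho)=\hat f(\rho)$, and then recover $f(s)$ from the $g(t)$ by M\"obius inversion. The paper merely orders these steps slightly differently (computing a closed form for $g(s)$ first, then applying M\"obius), but the content is identical.
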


\begin{proof}
We have that $f=\sum_{s\in S}g(s)\ld s \rd$, where $g(s)=\sum_{t\in S: t\geq s}f(t)$. For $s\in D_j$, by Theorem \ref{FInvThm3} we have
\begin{align*}
g(s) &= \frac{1}{r_j |G_j|} \sum_{\rho\in\X} d_{\rho} \tr\left( \hat f(\rho) \rho( \ld s^{-1} \rd)\right) \\
&= \frac{1}{r_j|G_j|} \sum_{\rho\in \X} d_{\rho} \tr \left( 
\hat f(\rho)\rho\left(
\sum_{t\in S:t^{-1}\leq s^{-1}} \mu(t^{-1},s^{-1}) t^{-1}
\right)\right) \\
&= \frac{1}{r_j|G_j|} \sum_{\rho\in \X} d_{\rho} \tr \left( 
\sum_{t\in S:t^{-1}\leq s^{-1}}\mu(t^{-1},s^{-1})
\hat f(\rho)\rho(t^{-1})
\right)\\
&= \frac{1}{r_j|G_j|} \sum_{\rho\in \X} d_{\rho} 
\sum_{t\in S:t^{-1}\leq s^{-1}}\mu(t^{-1},s^{-1})\tr \left(\hat f(\rho)\rho(t^{-1})\right).
\end{align*}
Since $g(s)=\sum_{t\in S:t\geq s}f(t)$, we have 
\begin{align*}
f(s)&=\sum_{t\in S:t\geq s}\mu(s,t)g(t)\\
&=\sum_{t\in S:t\geq s}\mu(s,t) \left(
\frac{1}{r(t)|G(t)|} \sum_{\rho\in \X} d_{\rho} 
\sum_{v\in S:v^{-1}\leq t^{-1}}\mu(v^{-1},t^{-1})\tr \left(\hat f(\rho)\rho(v^{-1})\right)
\right) \\
&=\sum_{\rho\in \X} d_\rho  
\sum_{t\in S:t\geq s} \frac{\mu(s,t)}{r(t)|G(t)|}
\sum_{v\in S:v^{-1}\leq t^{-1}} \mu(v^{-1},t^{-1}) \tr\left( \hat f(\rho) \rho(v^{-1})  \right).
\end{align*}
\end{proof}


\begin{rmka} 
If $S$ is a group, then the statements of Theorems \ref{FInvThm}--\ref{ThmSemigpBasisInvThm} all reduce to the statement of the Fourier inversion theorem for groups (Theorem \ref{FInvGrp}).
\end{rmka}


\section{Fast Fourier inversion for finite inverse semigroups---a unified approach}
\label{SecFastInversion}

In this section we describe a general method for the construction of fast inverse Fourier transforms for finite inverse semigroups, which results in general bounds on the complexity of the inverse Fourier transform in Theorem \ref{ThmMainInv} and Corollary \ref{CorMainInv}. In preparation for our results in Section \ref{FastInversionSpecificISGS}, we also explain how the results of Bj\"orklund et al.\ \cite{Bjork} can be used to bound certain terms appearing in Theorem \ref{ThmFFTMainResult}, Theorem \ref{ThmMainInv}, and Corollary \ref{CorMainInv}. Let $S$ be a finite inverse semigroup and let notation be as in Section \ref{SubSecEvalFT}.


\subsection{Designing a fast inverse Fourier transform}
\label{SubSecFastInversion} 

%
%

We begin with the following result on the complexity of the inverse Fourier transform. This result can be seen as the natural complement to Theorem \ref{ThmFFTMainResult}.

\begin{thm}
\label{ThmMainInv}If $\Y$ is an induced set of representations of $\C S$, then the number of operations required to compute the inverse Fourier transform of an arbitrary $\C$-valued function $f$ on $S$ expressed with respect to $\Y$ is no more than 
\[
\Cplx(\mu_S) + \sum_{k=0}^n r_k^2 \Tinv(\IRR(G_k)) ,
\]
where $\Cplx(\mu_S)$ is the maximum number of operations required to compute the M\"obius transform of an arbitrary $\C$-valued function on $S$ and $\Tinv(\IRR(G_k))$ is the maximum number of operations required to compute the inverse Fourier transform of an arbitrary $\C$-valued function on $G_k$ expressed with respect to $\IRR(G_k)$.
\end{thm}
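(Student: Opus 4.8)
The plan is to invert the algorithm underlying Theorem~\ref{ThmFFTMainResult}: there, one computes the Fourier transform relative to $\Y$ by first taking a zeta transform and then performing $\sum_k r_k^2$ group Fourier transforms; here I would first perform $\sum_k r_k^2$ \emph{inverse} group Fourier transforms and then take a M\"obius transform. Concretely, recall from Section~\ref{SubSecEvalFT} that if $f=\sum_{s\in S}f(s)s=\sum_{s\in S}g(s)\ld s\rd\in\C S$, where $g$ is the zeta transform of $f$, then for each $\D$-class $D_k$, each $\rho\in\IRR(G_k)$, and each ordered pair of idempotents $a,b\in D_k$,
\[
\hat f(\bar\rho)_{a,b}=\sum_{x\in G_k}h_{a,b}(x)\rho(x)=\hat h_{a,b}(\rho),
\qquad h_{a,b}(x)=g(p_axp_b^{-1}).
\]
So, for fixed $k$, $a$, $b$, the family $\{\hat f(\bar\rho)_{a,b}:\rho\in\IRR(G_k)\}$ \emph{is} the Fourier transform of the function $h_{a,b}$ on $\C G_k$ relative to $\IRR(G_k)$.

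Given $f$ expressed with respect to $\Y$---that is, given the block matrix $\bigoplus_{\bar\rho\in\Y}\hat f(\bar\rho)$---the first stage is, for each $k$ and each of the $r_k^2$ ordered pairs $(a,b)$ of idempotents in $D_k$, to read off the blocks $\hat f(\bar\rho)_{a,b}$ over all $\rho\in\IRR(G_k)$ and feed them to an inverse Fourier transform on $\C G_k$ relative to $\IRR(G_k)$. By the displayed identity this recovers $h_{a,b}$, hence the values $g(p_axp_b^{-1})$ for all $x\in G_k$, in at most $\Tinv(\IRR(G_k))$ operations. By Steinberg's isomorphism (Theorem~\ref{ThmSteinbergIsom}) the map $t\mapsto(\ran(t),\dom(t),p_{\ran(t)}^{-1}tp_{\dom(t)})$ is a bijection from $D_k$ onto the set of triples $(a,b,x)$ with $a,b$ idempotents of $D_k$ and $x\in G_k$; thus running over all pairs $(a,b)$ in $D_k$ recovers $g(t)$ for every $t\in D_k$, exactly once, and running over all $k$ recovers the entire function $g$. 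This stage costs $\sum_{k=0}^n r_k^2\,\Tinv(\IRR(G_k))$ operations.

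The second stage is to apply the M\"obius transform to $g$, recovering $f$ via $f(s)=\sum_{t\in S:t\geq s}\mu(s,t)g(t)$; this costs at most $\Cplx(\mu_S)$ operations by definition. Adding the two stages gives the bound $\Cplx(\mu_S)+\sum_{k=0}^n r_k^2\,\Tinv(\IRR(G_k))$.

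I expect the only delicate point to be the accounting around the two ``bookkeeping'' moves: extracting the blocks $\hat f(\bar\rho)_{a,b}$ from the block-diagonal matrix and assembling them into, and disassembling them out of, the inputs and outputs of the group transforms. One should check that these are pure reindexing operations---permutations of stored data indexed by pairs of idempotents and by group elements---involving no complex additions or multiplications, and hence contributing nothing to the operation count, exactly as in the forward direction of Theorem~\ref{ThmFFTMainResult}. Once this is observed, the rest is the bijection check above together with the definitions of $\Cplx(\mu_S)$ and $\Tinv(\IRR(G_k))$.
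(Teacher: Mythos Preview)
Your proposal is correct and follows essentially the same approach as the paper: first perform $r_k^2$ inverse group Fourier transforms on each $G_k$ to recover the groupoid-basis coefficients $g$, then apply the M\"obius transform to obtain $f$. The paper's proof is nearly identical, differing only in notation (it writes the bijection $S_{a,b}\to G_k$ explicitly rather than citing Steinberg's isomorphism) and in not explicitly flagging the bookkeeping/reindexing point you raise.
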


\begin{proof}Given a set of induced representations $\Y$ of $\C S$ and an element $f\in \C S$ expressed with respect to $\Y$, we may find the coefficients $f(s)$ such that $f=\sum_{s\in S}f(s)s$ by first computing the coefficients $g(s)$ for which $f=\sum_{s\in S}g(s)\ld s \rd$, and then computing the coefficients $f(s)$ from the $g(s)$ by computing the change of basis from the groupoid basis to the semigroup basis.

To show that this approach results in the above bound, let $f=\sum_{s\in S}f(s)s$ be an arbitrary $\C$-valued function on $S$. We assume we have the coefficients of the matrices $\hat f(\bar\rho) = \bar\rho(f)$ for all $\bar\rho\in \Y$ stored in memory. Let $g:S\rightarrow \C$ be the function given by, for $s\in S$, $g(s)=\sum_{t\in S:t\geq s}f(t)$, so that
\[
f = \sum_{s\in S}f(s)s = \sum_{s\in S}g(s)\ld s \rd.
\]


Let $a,b\in D_k$ be idempotent and let $S_{a,b}=\{s\in D_k : \ran(s)=a, \dom(s)=b\}$.  
By Theorem \ref{ThmSteinbergIsom} we have the bijection
\[
h_{a,b}:S_{a,b}\rightarrow G_k
\]
given by $h_{a,b}(s)={p_a}^{-1}s p_b$, with inverse
\[
{h_{a,b}}^{-1}:G_k\rightarrow S_{a,b}
\]
given by ${h_{a,b}}^{-1}(s)=p_as{p_b}^{-1}$. Let $g_{a,b}:G_k\rightarrow \C$ by $g_{a,b}(s)=g({p_a} s {p_b}^{-1})$, so that for all $\rho\in \IRR(G_k)$, we have $\hat f(\bar \rho)_{a,b}=\hat g_{a,b}(\rho)$, the Fourier transform of $g_{a,b}$ at $\rho$ in $G_k$. If we invert the $\hat g_{a,b}(\rho)$ as $\rho$ varies over $\IRR(G_k)$, then we recover the coefficients $\{g_{a,b}(s):s\in G_k\}=\{g(p_as{p_b}^{-1}):s\in G_k\}=\{g(s):s\in S_{a,b}\}$. By assumption this computation requires no more than $\Tinv(\IRR(G_k))$ operations. Thus, computing the coefficients $g(s)$ for all $s\in S$ requires no more than
\[
\sum_{k=0}^n r_k^2 \Tinv(\IRR(G_k)) 
\]
operations. 

We can then compute the coefficients $f(s)$ from the coefficients $g(s)$ by computing the change of basis from the groupoid basis of $S$ to the semigroup basis (that is, by computing the M\"obius transform of the function $g$), which by assumption requires no more than $\Cplx(\mu_S)$ operations.
\end{proof}

\begin{cor}
\label{CorMainInv}Let $\Cplxinv(G_k)$ denote the quantity
\[
\Cplxinv(G_k) = \min\{\Tinv(\mathcal R_k)\},
\]
where the minimum is taken across all complete sets $\mathcal R_k$ of inequivalent, irreducible representations of $\C G_k$. Then there exists a complete set of inequivalent, irreducible representations ${\mathcal R}$ of $\C S$ such that the number of operations required to compute the inverse Fourier transform of an arbitrary $\C$-valued function on $S$ expressed with respect to ${\mathcal R}$ is no more than
\[
\Cplx(\mu_S) + \sum_{k=0}^n r_k^2 \Cplxinv(G_k).
\]
\end{cor}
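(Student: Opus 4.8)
The plan is to obtain the corollary as a direct consequence of Theorem \ref{ThmMainInv}, the only new ingredient being that we are now free to optimize the choice of irreducible representations on each maximal subgroup separately. First I would note that for each $\D$-class $D_k$ the collection of values $\{\Tinv(\mathcal R_k)\}$ over which $\Cplxinv(G_k)$ is defined is a nonempty set of non-negative integers: it is nonempty because, relative to any fixed complete set of inequivalent, irreducible representations of $\C G_k$, the inverse Fourier transform can be computed naively in at most $|G_k|^2$ operations. Hence, although the minimization ranges over an a priori infinite family of complete sets of representations, by the well-ordering of $\N$ the value $\Cplxinv(G_k)$ is actually attained by some complete set of inequivalent, irreducible representations $\mathcal R_k^*$ of $\C G_k$, so that $\Tinv(\mathcal R_k^*)=\Cplxinv(G_k)$.

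Next, keeping the notation of Section \ref{SubSecEvalFT} ($\D$-classes $D_0,\ldots,D_n$, chosen idempotents $e_k$, connecting elements $p_e$, maximal subgroups $G_k$, and idempotent counts $r_k$), I would feed the family $\mathcal R_0^*,\ldots,\mathcal R_n^*$ into the construction of Theorem \ref{ThmRepInducing} to produce $\mathcal R=\{\bar\rho:\rho\in\biguplus_{k=0}^n \mathcal R_k^*\}$. By Theorem \ref{ThmRepInducing}, $\mathcal R$ is automatically a complete set of inequivalent, irreducible representations of $\C S$, and by Definition \ref{DefInduced} it is an induced set of representations of $\C S$.

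Finally, I would apply Theorem \ref{ThmMainInv} with $\Y=\mathcal R$ and $\IRR(G_k)=\mathcal R_k^*$ for each $k$. That theorem gives that the inverse Fourier transform of an arbitrary $\C$-valued function on $S$ expressed with respect to $\mathcal R$ requires no more than $\Cplx(\mu_S)+\sum_{k=0}^n r_k^2\,\Tinv(\mathcal R_k^*)$ operations, and substituting $\Tinv(\mathcal R_k^*)=\Cplxinv(G_k)$ yields the bound $\Cplx(\mu_S)+\sum_{k=0}^n r_k^2\,\Cplxinv(G_k)$, as claimed. I do not expect a genuine obstacle: the corollary is a repackaging of Theorem \ref{ThmMainInv}, and the only point that warrants a line of care is the observation just made, that the minimum defining $\Cplxinv(G_k)$ is attained rather than merely approached.
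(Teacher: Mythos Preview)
Your proposal is correct and follows essentially the same approach as the paper: choose for each $k$ an optimal $\mathcal R_k$ realizing $\Cplxinv(G_k)$, form the induced set $\mathcal R$ via Theorem \ref{ThmRepInducing}, and apply Theorem \ref{ThmMainInv} with $\IRR(G_k)=\mathcal R_k$. Your additional remark that the minimum is actually attained (by well-ordering of $\N$) is a nice point of care that the paper leaves implicit.
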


\begin{proof} For $k\in \{0,\ldots,n\}$, let $\mathcal R_k$ be a complete set of inequivalent irreducible representations of $\C G_k$ such that the inverse Fourier transform of an arbitrary $\C$-valued function on $G_k$ expressed with respect to $\mathcal R_k$ can be computed in no more than $\Cplxinv(G_k)$ operations. Then let $ \mathcal R $ be the complete set of inequivalent irreducible representations of $\C S$ induced by $\biguplus_{k=0}^n \mathcal R_k$. The result follows from Theorem \ref{ThmMainInv}, with $\IRR(G_k) = \mathcal R_k$.
\end{proof}

\subsection{Fast zeta and M\"obius transforms}

In \cite{InvSemiFFT} we designed fast Fourier transforms for specific inverse semigroups of interest using Theorem \ref{ThmFFTMainResult}, by designing explicit fast zeta transforms for these inverse semigroups and combining them with existing algorithms for Fourier transforms on their maximal subgroups. The recent work of Bj\"orklund et al.\ \cite{Bjork} contains an algorithm that constructs small circuits for computing zeta and M\"obius transforms on finite lattices with few join-irreducibles. Recall that, if $L$ is a finite lattice, then an element $j\in L$ is {\em join-irreducible} if and only if $j$ covers exactly one other element of $L$. If $M$ is a finite meet-semilattice, denote by $L(M)$ the lattice obtained by adjoining a formal maximal element $\MAX$ to $M$ if $M$ is not already a lattice. If $M$ is a lattice, then let $L(M)=M$. The algorithm in \cite{Bjork} is easily modified to work for semilattices, which results in the following theorem.

\begin{thm}
\label{ThmBjork}
Let $(M,\leq)$ be a finite meet-semilattice with M\"obius function $\mu$ and suppose $L(M)$ has $v$ join-irreducible elements. Let $f: M \rightarrow \C$. For $s\in M$, let
\[
f_\zeta(s) = \sum_{t\in M: t\geq s}f(t)
\]
and
\[
f_\mu(s) = \sum_{t\in M: t\geq s}\mu(s,t) f(t).
\]
Then the collections of coefficients $\{ f_\zeta(s):s\in M \}$ and $\{ f_\mu(s):s\in M \}$ can each be computed in $O(|M|v)$ operations. 
\end{thm}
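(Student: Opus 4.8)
The plan is to reduce the meet-semilattice statement to the lattice case treated by Bj\"orklund et al.\ \cite{Bjork}, and to obtain the M\"obius-transform half from the zeta-transform half. First I would dispose of the reduction. If $M$ is already a lattice there is nothing to do. Otherwise $M$ has at least two maximal elements, so $L(M)=M\cup\{\MAX\}$ is a finite lattice with $|L(M)|=|M|+1=O(|M|)$, and I would extend $f$ to $L(M)$ by setting $f(\MAX)=0$. For $s\leq t$ in $M$ the interval $[s,t]$ is the same computed in $M$ or in $L(M)$ (since $\MAX\not\leq t$), so $\mu_{L(M)}$ agrees with the M\"obius function of $M$ on pairs of elements of $M$; and because $f(\MAX)=0$, the sums defining $f_\zeta(s)$ and $f_\mu(s)$ are unchanged whether evaluated in $M$ or in $L(M)$. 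Also $\MAX$ covers at least two elements, hence is not join-irreducible, so the join-irreducibles of $L(M)$ form a well-defined set of size $v$. Thus it suffices to prove the $O(|L|v)$ bound when $M$ is itself a lattice $L$ with $v$ join-irreducibles.

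Next I would appeal to \cite{Bjork} for the lattice case. There one enumerates the join-irreducibles $j_1,\dots,j_v$, initializes a working array indexed by $L$ to the values of $f$, and sweeps through the $j_i$; each sweep is an elementary linear transformation of the array that can be evaluated in $O(|L|)$ operations, and after all $v$ sweeps the array holds the zeta transform $f_\zeta$. Correctness rests on the fact that every element of a finite lattice is the join of the join-irreducibles below it, which is what lets one certify that each $t\geq s$ contributes to $f_\zeta(s)$ exactly once across the sweeps. For the M\"obius transform one uses that it is the inverse of the zeta transform (M\"obius inversion over $\leq$): running the inverse elementary transformations in the reverse order, again $O(|L|)$ per step, computes $f_\mu$; alternatively one quotes the M\"obius half of \cite{Bjork} directly. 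Either way the cost is $O(|L|v)=O(|M|v)$.

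The semilattice reduction above is routine; the substantive step is faithfully importing the algorithm of \cite{Bjork} in the precise form needed here. The point requiring the most care is verifying that each per-join-irreducible sweep, together with its inverse, really is local enough to cost only $O(|L|)$ operations --- the naive product over join-irreducibles $j$ of the operators ``add the value at $x\vee j$ to the value at $x$'' overcounts in a non-distributive lattice (already for $M_3$), so the actual sweep in \cite{Bjork} is more delicate --- and confirming that it is the number of join-irreducibles, rather than of meet-irreducibles, that governs the bound for $f_\zeta$ and $f_\mu$ as stated. This amounts to checking details against \cite{Bjork} rather than proving anything new, which is why the result can be presented as a straightforward modification of that paper.
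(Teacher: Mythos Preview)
Your proposal is correct and follows essentially the same approach as the paper: reduce the semilattice case to the lattice case by adjoining a formal maximum with value $0$, observe that the relevant sums and M\"obius values are unchanged on $M$, and then cite \cite{Bjork} for the lattice result. Your additional remarks about the per-join-irreducible sweeps, the non-distributive subtlety, and the join- versus meet-irreducible distinction are commentary rather than proof content, but they are accurate and do no harm.
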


\begin{proof}The result is immediate from Theorem 1.1 of \cite{Bjork} if $M$ is a lattice, so suppose $M$ is not a lattice. Let $L=L(M)$, and denote by $\leq_L$ and $\mu_L$ the partial order and the M\"obius function of $L$, respectively. The algorithm in \cite{Bjork} finds arithmetic circuits, each of size $O(|L|v)$, for computing the upward M\"obius and zeta transforms of arbitrary $\C$-valued functions on $L$. (In our language, these circuits are algorithms which each require $O(|L|v)$ operations to run on an arbitrary $\C$-valued function on $L$ as input.) Let $f_L:L\rightarrow \C$ by 
\[
f_L(s) = 
\begin{cases}
f(s) & \up{if }s\in M;\\
0 & \up{if }s=\MAX,
\end{cases}
\]
so we can compute the coefficients 
\[
\zeta_L(s) = \sum_{t\in L: t \geq_L s} f_L(t)
\]
for all $s\in L$ and
\[
\mu_L(s) = \sum_{t\in L:t \geq_L s} \mu_L(s,t) f_L(t)
\]
for all $s\in L$, each in $O(|L|v)$ operations. For $s\in M$ we have
\begin{align*}
\zeta_L(s) 
&= f_L(\MAX) + \sum_{t\in M:t\geq s} f_L(t) \\
&= 0 + \sum_{t\in M:t\geq s} f(t) \\
&= f_\zeta(s),
\end{align*}
and, since the M\"obius function of a partial order at an ordered pair $(a,b)$ depends only on the interval $[a,b]$ in the partial order, we have
\begin{align*}
\mu_L(s) 
&= \mu_L(s,\MAX)f_L(\MAX) + \sum_{t\in M: t\geq s} \mu_L(s,t)f_L(t)\\
&= \mu_L(s,\MAX)\cdot 0 + \sum_{t\in M:t\geq s} \mu(s,t)f(t)\\
&= f_\mu(s).
\end{align*}
Thus we can compute the collections of coefficients $\{f_\zeta(s):s\in M\}$ and $\{ f_\mu(s):s\in M \}$ each in $O(|L|v) = O((|M|+1)v) = O(|M|v + |M|) = O(|M|v)$ (since $v\leq |M|$) operations, as claimed.
\end{proof}

\begin{rmk}Let $E(S)$ denote the set of idempotents of $S$. 
It is easy to see that $E(S)$ is a sub-inverse semigroup of $S$ and $(E(S),\leq)$ is a meet-semilattice (where the meet $e \meet f$ of $e,f\in E(S)$ is simply given by $e\meet f = ef=fe$). 
However, in general $(S,\leq)$ itself is not a meet-semilattice. For example, if $S$ is a group with $|S|>1$, then the partial order on $S$ reduces to equality, so $(S,\leq)$ is not a meet-semilattice. More generally, if $e\in E(S)$ is the minimal element and $|G_e|>1$, then $(S,\leq)$ is not a meet-semilattice. However, in many cases of interest $(S,\leq)$ {\em is} a meet-semilattice,
and when this is the case Theorem \ref{ThmBjork} can be used to help bound the terms $\Cplx(\zeta_S)$ and $\Cplx(\mu_S)$ in Theorem \ref{ThmFFTMainResult}, Theorem \ref{ThmMainInv}, and Corollary \ref{CorMainInv}. 
\end{rmk}



\section{FFTs and FIFTs for specific classes of inverse semigroups}

\label{FastInversionSpecificISGS}

In \cite{InvSemiFFT} we developed fast Fourier transforms for the rook monoid and its wreath product by arbitrary finite groups. In this section we begin by giving fast Fourier inversion algorithms for these semigroups relative to the same sets of representations used in \cite{InvSemiFFT}, so our results in this section also give rise to fast convolution algorithms for these semigroups. We then proceed to give fast forward and inverse Fourier transform algorithms for other inverse semigroups of interest not previously considered---namely, for the planar rook monoid, the partial cyclic shift monoid, and the partial rotation monoid.

\subsection{The rook monoid} 
\label{SecRookFIFT}
In \cite{InvSemiFFT} we used the approach in Section \ref{SubSecEvalFT} to show that, for any
$f=\sum_{s\in R_n} f(s)s \in \C R_n$, the change of basis from the semigroup basis to the groupoid basis can be computed in no more than 
$
\frac{2}{3}n^3 |R_n|
$
operations, and the change of basis from the groupoid basis to the Fourier basis of $\C R_n$ relative to $\Y$ can be computed in no more than 
$
\frac{3}{4} n (n-1) |R_n| 
$
operations, where $\Y$ is the set of representations of $\C R_n$ induced by Young's seminormal (or orthogonal) representations of the symmetric group.
Since $n=O(\log |R_n|)$, it followed that the Fourier transform of an arbitrary $\C$-valued function on $R_n$ can be computed in
$
O(|R_n| \log ^3 |R_n|)
$
operations. 

Bj\"orklund et al.\ \cite{Bjork} then showed that the computation of the change of basis from the semigroup basis to the groupoid basis of $\C R_n$ requires no more than $O(|R_n| \log^2 |R_n|)$ operations---this proves the following theorem.

\begin{thm}
Let $f=\sum_{s\in R_n}f(s)s \in \C R_n$. Let $\Y$ be the complete set of inequivalent irreducible representations of $\C R_n$ induced by Young's seminormal or orthogonal representations of the symmetric group. Then the Fourier transform of $f$ relative to $\Y$ can be computed in no more than $O(|R_n| \log^2 |R_n|)$ operations.
\end{thm}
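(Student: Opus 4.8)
The plan is to assemble the claimed bound out of three pieces already available: Theorem~\ref{ThmFFTMainResult}, the prior work of \cite{InvSemiFFT} on the symmetric-group FFT, and the improved bound on the zeta transform of $\C R_n$ from \cite{Bjork}. Concretely, I would apply Theorem~\ref{ThmFFTMainResult} to $S=R_n$: the number of operations needed to compute the Fourier transform of an arbitrary $f\in\C R_n$ relative to an induced set $\Y$ of representations is at most $\Cplx(\zeta_{R_n}) + \sum_{k=0}^n r_k^2\,\T(\IRR(G_k))$. The $\D$-classes of $R_n$ are indexed by rank $k\in\{0,1,\ldots,n\}$, the maximal subgroup $G_k$ at any rank-$k$ idempotent is $S_k$, and the number of idempotents in $D_k$ is $r_k=\binom{n}{k}$ (the rank-$k$ partial identities correspond to $k$-subsets of $\{1,\ldots,n\}$).

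The first main step is to bound the group-FFT term. Take $\IRR(S_k)$ to be Young's seminormal (or orthogonal) representations; Maslen's algorithm \cite{Maslen} computes the Fourier transform of an arbitrary function on $S_k$ relative to this set in $O(k^2 k!)=O(k!\log^2 k!)$ operations, so $\T(\IRR(S_k))\le c\,k^2 k!$ for an absolute constant $c$. Then
\[
\sum_{k=0}^n r_k^2\,\T(\IRR(S_k)) \le c\sum_{k=0}^n \binom{n}{k}^2 k^2 k! \le c\,n^2\sum_{k=0}^n\binom{n}{k}^2 k! = c\,n^2\,|R_n|,
\]
using the identity $|R_n|=\sum_{k=0}^n\binom{n}{k}^2 k!$ from the excerpt. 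Since $n=O(\log|R_n|)$ (indeed $|R_n|\ge n!$, so $\log|R_n|\ge \log n!\ge \tfrac12 n\log n$ for large $n$), this contributes $O(|R_n|\log^2|R_n|)$ operations — matching the bound already stated in \cite{InvSemiFFT} for the "group FFT" part.

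The second main step — and the one that actually upgrades the old $\log^3$ bound to $\log^2$ — is the zeta-transform term $\Cplx(\zeta_{R_n})$. Here I would simply invoke the result of Bj\"orklund et al.\ \cite{Bjork}: the change of basis from the semigroup basis to the groupoid basis of $\C R_n$ (equivalently, the zeta transform on the poset $(R_n,\le)$) can be carried out in $O(|R_n|\log^2|R_n|)$ operations. (This is where the extra factor of $n\approx\log|R_n|$ over the naive per-$\D$-class argument is saved: $(R_n,\le)$, or its associated lattice, has relatively few join-irreducibles, so Theorem~\ref{ThmBjork}-style circuit constructions apply; the detailed count is carried out in \cite{Bjork} and I would just cite it.) Adding the two pieces gives a total of $O(|R_n|\log^2|R_n|) + O(|R_n|\log^2|R_n|) = O(|R_n|\log^2|R_n|)$, which is the assertion.

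\textbf{Main obstacle.} Every ingredient is essentially a citation, so there is no deep new argument here; the only real point of care is bookkeeping the asymptotics — verifying that $n=O(\log|R_n|)$ (so that $k^2\le n^2 = O(\log^2|R_n|)$ uniformly over $k$) and that the crude bound $\sum_k\binom{n}{k}^2 k^2 k!\le n^2|R_n|$ is legitimate. One should also make sure the induced set $\Y$ used for the zeta-transform bound in \cite{Bjork} is the same as, or at least compatible with, the one obtained by inducing Young's seminormal/orthogonal representations via Theorem~\ref{ThmRepInducing}; since the zeta transform depends only on the poset structure of $R_n$ and not on the choice of $\IRR(S_k)$, this compatibility is automatic. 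Thus the "hard part" is really just confirming that the two cited complexity bounds can be added under a single $O(\cdot)$ relative to the \emph{same} $\Y$, which they can.
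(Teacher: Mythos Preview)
Your proposal is correct and follows essentially the same route as the paper: split the cost via Theorem~\ref{ThmFFTMainResult} into the zeta transform plus $\sum_k \binom{n}{k}^2 \T(\IRR(S_k))$, bound the latter by $O(n^2|R_n|)=O(|R_n|\log^2|R_n|)$ using Maslen's $O(k^2 k!)$ FFT for $S_k$, and cite \cite{Bjork} for $\Cplx(\zeta_{R_n})=O(|R_n|\log^2|R_n|)$. The paper's only cosmetic difference is that it quotes the explicit constant $\tfrac{3}{4}k(k-1)k!$ for the $S_k$ FFT (from \cite{InvSemiFFT,Maslen}) rather than a generic $ck^2k!$, and it presents the argument as a one-line remark preceding the theorem statement rather than as a formal proof.
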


We now show that a similar result holds for Fourier inversion in $R_n$. In particular, we have:

\begin{thm}
\label{RnInversionResult}
Let $f=\sum_{s\in R_n}f(s)s \in \C R_n$. Let $\Y$ be the complete set of inequivalent, irreducible representations of $\C R_n$ induced by Young's seminormal or orthogonal representations of the symmetric group. 
If $f\in \C R_n$ is expressed with respect to $\Y$, then we can compute the coefficients $\{f(s):s\in R_n\}$ such that $f=\sum_{s\in R_n}f(s)s$ in no more than $O(|R_n| \log^2 |R_n|)$ operations.
\end{thm}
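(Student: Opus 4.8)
The plan is to apply Theorem~\ref{ThmMainInv} with $S = R_n$ and with $\Y$ the set of representations induced by Young's seminormal or orthogonal representations of the symmetric groups $S_0, S_1, \ldots, S_n$. By that theorem, the number of operations required to invert an arbitrary $f \in \C R_n$ expressed with respect to $\Y$ is at most $\Cplx(\mu_{R_n}) + \sum_{k=0}^n r_k^2\, \Tinv(\IRR(S_k))$, so it suffices to bound each of the two summands by $O(|R_n|\log^2|R_n|)$.

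First I would handle the M\"obius transform term $\Cplx(\mu_{R_n})$. The set of idempotents $E(R_n)$ is the Boolean lattice of subsets of $\{1,\ldots,n\}$, but $(R_n,\leq)$ itself is a meet-semilattice (two partial injections have a greatest common lower bound, obtained by restricting to the largest subdomain on which they agree), so Theorem~\ref{ThmBjork} applies. I would count the join-irreducibles of $L(R_n)$: these are the elements covering exactly one other element, namely the rank-$1$ partial injections together with (if needed) the rank-$0$ element, giving $v = O(n^2)$ join-irreducibles. Hence $\Cplx(\mu_{R_n}) = O(|R_n|\cdot n^2) = O(|R_n|\log^2|R_n|)$, using $n = O(\log|R_n|)$. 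Alternatively, one may simply cite the bound of Bj\"orklund et al.\ already invoked for the zeta/M\"obius transform just above the statement; the M\"obius transform on $\C R_n$ costs $O(|R_n|\log^2|R_n|)$ operations by the same argument that gives the zeta transform bound in the preceding theorem.

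Next I would handle $\sum_{k=0}^n r_k^2\, \Tinv(\IRR(S_k))$. Here $G_k \cong S_k$, the maximal subgroup at a rank-$k$ idempotent, and $r_k = \binom{n}{k}$ is the number of rank-$k$ idempotents (subsets of size $k$). Taking $\IRR(S_k)$ to be Young's seminormal or orthogonal representations, Maslen's algorithm \cite{Maslen} gives $\Tinv(\IRR(S_k)) = O(k!\,k^2)$ operations for the inverse Fourier transform on $\C S_k$. Therefore $\sum_{k=0}^n r_k^2\,\Tinv(\IRR(S_k)) = O\!\left(\sum_{k=0}^n \binom{n}{k}^2 k!\, k^2\right) = O\!\left(n^2 \sum_{k=0}^n \binom{n}{k}^2 k!\right) = O(n^2 |R_n|) = O(|R_n|\log^2|R_n|)$, where the middle equality pulls the factor $k^2 \leq n^2$ out of the sum and the next uses $|R_n| = \sum_{k=0}^n \binom{n}{k}^2 k!$. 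This is exactly the mirror of the forward-transform count in \cite{InvSemiFFT}, with Maslen's FFT replaced by its inverse.

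Combining the two bounds via Theorem~\ref{ThmMainInv} gives the claimed $O(|R_n|\log^2|R_n|)$ total. I expect the only real subtlety to be the verification that $(R_n,\leq)$ is a meet-semilattice and the correct count of its join-irreducibles so that Theorem~\ref{ThmBjork} yields the $O(|R_n|\log^2|R_n|)$ bound on $\Cplx(\mu_{R_n})$; the representation-theoretic term is routine given Maslen's algorithm and the identity for $|R_n|$. Since this bound matches that for the forward transform relative to the same $\Y$, the two together also yield a fast convolution algorithm on $\C R_n$ of the same complexity.
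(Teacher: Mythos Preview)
Your proposal is correct and follows essentially the same route as the paper: apply Theorem~\ref{ThmMainInv}, bound the group-inversion term via Maslen's $O(k! k^2)$ inverse FFT on $S_k$ together with $|R_n|=\sum_k \binom{n}{k}^2 k!$, and bound $\Cplx(\mu_{R_n})$ by showing $(R_n,\leq)$ is a meet-semilattice with $n^2$ rank-one join-irreducibles so that Theorem~\ref{ThmBjork} gives $O(n^2|R_n|)$. One small correction: the rank-$0$ element is the minimum of $L(R_n)$ and covers nothing, so it is not join-irreducible; the join-irreducibles of $L(R_n)$ are exactly the $n^2$ rank-one maps.
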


\begin{proof}Let $D_0 ,\ldots, D_n$ denote the $\D$-classes of $R_n$, where $D_k$ is the set of elements of $R_n$ of rank $k$. Since the idempotents of $R_n$ are the restrictions of the identity map, $D_k$ contains $\binom{n}{k}$ idempotents and if $e\in D_k$ is idempotent, then $G_e\cong S_k$. Let $\Y$ be the induced set of representations of $\C R_n$ given by taking $e_k$ to be the partial identity on $\{1,2,\ldots,k\}$, taking $p_a$ (for any idempotent $a\in D_k$) to be the unique order-preserving bijection from $\ran(e_k)=\dom(e_k)$ to $\ran(a)=\dom(a)$, and taking $\IRR(G_k)$ to be Young's seminormal or orthogonal representations of $\C S_k$.
By Theorem \ref{ThmMainInv}, then, the number of operations needed to compute the coefficients $\{f(s):s\in R_n\}$ is no more than
\[
\Cplx(\mu_{R_n}) + \sum_{k=0}^n \binom{n}{k}^2 \Tinv(\IRR(S_k)).
\]
Maslen's algorithm \cite{Maslen} for Fourier inversion on $\C S_k$ implies that
\[
\Tinv(\IRR(S_k)) \leq \frac{3}{4} k(k-1) k!,
\]
so we need no more than
\begin{align*}
\Cplx(\mu_{R_n}) + \sum_{k=0}^n \binom{n}{k}^2 \frac{3}{4} k (k-1) k! & \leq \Cplx(\mu_{R_n}) + \frac{3}{4}n (n-1) \sum_{k=0}^n \binom{n}{k}^2  k!\\
& = \Cplx(\mu_{R_n}) + \frac{3}{4}n (n-1) |R_n|\\
& = \Cplx(\mu_{R_n}) + O(|R_n| \log^2 |R_n|)
\end{align*}
operations to compute the coefficients $\{f(s):s\in R_n\}$. 

To handle the $\Cplx(\mu_{R_n})$ term, we note that $(R_n,\leq)$ is a meet-semilattice, where the meet $\sigma\meet\tau\in R_n$ of two elements $\sigma,\tau\in R_n$ is the maximal common restriction between $\sigma$ and $\tau$---namely, $\sigma\meet\tau$ is the element such that 
$$
\dom(\sigma\meet\tau) = \{x\in \{1,2,\ldots,n\} : x\in\dom(\sigma) \cap \dom(\tau) \textup{ and }\sigma(x)=\tau(x)\},
$$
and for $x\in \dom(\sigma\meet\tau)$, we have $\sigma\meet\tau(x)=\sigma(x)=\tau(x)$. Let $L$ denote $(R_n,\leq)$ with a formal maximal element adjoined and let $n\geq 1$. The join-irreducibles of $L$ are the elements of $R_n$ of rank 1, of which there are $n^2$. Theorem \ref{ThmBjork} then applies, and yields $\Cplx(\mu_{R_n}) = O(n^2|R_n|) = O(|R_n|\log^2 |R_n|)$.
\end{proof}

\subsection{Rook wreath products} 
\label{SecWreathFIFT}
We now consider wreath products of $R_n$ with arbitrary finite groups.

\begin{defn}
\label{DefRookWreath}
If $G$ is a finite group, then the {\em rook wreath product} $\Gwr R_n$ is the semigroup of all $n\times n$ matrices with entries in $G \uplus \{0\}$ having at most one entry from $G$ in each row and column under the operation of matrix multiplication (extended from the multiplication of $G$), where $0 g=g 0=0$ for all $g\in G$.
\end{defn}

Write $1$ for the identity of $G$. Clearly we recover $R_n$ as $\Z_1\wr R_n$. In \cite{InvSemiFFT} we showed that, if $G$ is an arbitrary finite group, then the Fourier transform of an arbitrary $\C$-valued function on $\Gwr R_n$ can be computed in $O(|\Gwr R_n| \log^4 |\Gwr R_n|)$ operations. We now show that a similar result holds for Fourier inversion for $\Gwr R_n$.

\begin{thm}
\label{ThmRnWreathInversion}
There exists a complete set $\Y$ of inequivalent irreducible representations of $\C \Gwr R_n$ such that the Fourier transform and the inverse Fourier transform relative to $\Y$ of an arbitrary element $f\in \C \Gwr R_n$ can be computed in $O(|\Gwr R_n| \log^4 |\Gwr R_n|)$ operations.
\end{thm}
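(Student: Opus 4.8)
The plan is to follow the template of Theorem \ref{ThmMainInv} (and its forward counterpart, Theorem \ref{ThmFFTMainResult}) applied to $S = \Gwr R_n$, combining a fast Möbius transform on the underlying semilattice with fast inverse Fourier transforms on the maximal subgroups, and to note that by the same argument the forward transform attains the same bound (which we already know from \cite{InvSemiFFT}). First I would describe the $\D$-class structure of $\Gwr R_n$: the rank of a matrix (number of nonzero entries) is constant on $\D$-classes, so there are $n+1$ $\D$-classes $D_0,\ldots,D_n$, with $D_k$ the elements of rank $k$. The idempotents of $\Gwr R_n$ are exactly the partial identity matrices (entries all equal to $1\in G$ on their support), so $D_k$ contains $\binom{n}{k}$ idempotents, i.e.\ $r_k = \binom{n}{k}$. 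The maximal subgroup at a rank-$k$ idempotent is isomorphic to $G \wr S_k$, the wreath product of $G$ by the symmetric group $S_k$. For $\IRR(G_k)$ I would take the representations of $\C(G\wr S_k)$ built from those of $G$ and the symmetric groups via the standard Clifford-theory/little-groups construction, which is exactly the set for which Rockmore's algorithm \cite{DanWreath} applies.

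Next, by Theorem \ref{ThmMainInv} the number of operations to compute the inverse Fourier transform of $f\in \C\Gwr R_n$ expressed with respect to the induced set $\Y$ is at most
\[
\Cplx(\mu_{\Gwr R_n}) + \sum_{k=0}^n \binom{n}{k}^2 \Tinv(\IRR(G\wr S_k)).
\]
Rockmore's fast inverse Fourier transform for symmetric group wreath products \cite{DanWreath} gives a bound of the form $\Tinv(\IRR(G\wr S_k)) = O\!\left(|G\wr S_k|\,\log^4|G\wr S_k|\right) = O\!\left(|G|^k k!\,(k\log|G| + k\log k)^4\right)$, which for fixed $G$ is $O(|G|^k k!\, k^4 \log^4 k) = O(|G|^k k!\, n^4 \log^4 n)$ when $k\le n$ (absorbing $\log|G|$ into the constant, or tracking it explicitly if one wants the bound uniform in $G$). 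Summing, the group-Fourier part contributes at most
\[
n^4 \log^4 n \sum_{k=0}^n \binom{n}{k}^2 |G|^k k!
\]
times a constant, and the sum is exactly $|\Gwr R_n|$ by the analogue of the count $|R_n| = \sum_k \binom{n}{k}^2 k!$ (each rank-$k$ matrix is a choice of $k$ rows, $k$ columns, a bijection between them, and a $G$-label on each of the $k$ nonzero entries). Since $n = O(\log|\Gwr R_n|)$ and $\log\log$ factors are absorbed, this term is $O(|\Gwr R_n|\log^4|\Gwr R_n|)$.

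For the $\Cplx(\mu_{\Gwr R_n})$ term I would check that $(\Gwr R_n,\leq)$ is a meet-semilattice: for two matrices $\sigma,\tau$, their meet is the matrix agreeing with both on exactly those positions where $\sigma$ and $\tau$ have the same nonzero $G$-entry, and $0$ elsewhere (one verifies this is the largest common lower bound, just as for $R_n$). Adjoining a formal maximal element $\MAX$ to form $L = L(\Gwr R_n)$, the join-irreducibles of $L$ are the rank-$1$ matrices — there are $n^2|G|$ of them — so Theorem \ref{ThmBjork} yields $\Cplx(\mu_{\Gwr R_n}) = O(n^2|G|\,|\Gwr R_n|) = O(|\Gwr R_n|\log^2|\Gwr R_n|)$ for fixed $G$, which is dominated by the $\log^4$ term. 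The forward direction is identical with $\zeta$ in place of $\mu$ and Theorem \ref{ThmFFTMainResult} in place of Theorem \ref{ThmMainInv}, and was already established in \cite{InvSemiFFT}; taking $\Y$ to be the single induced set above makes both bounds hold simultaneously with respect to the same $\Y$, which is what the statement asserts. The main obstacle is bookkeeping rather than conceptual: one must correctly cite and apply Rockmore's inversion complexity for $G\wr S_k$ in the form needed, confirm the idempotent/maximal-subgroup description of $\Gwr R_n$ (most cleanly by embedding $\Gwr R_n$ in $R_{n|G|}$ or by a direct matrix computation), and verify the semilattice/join-irreducible claims — after which the summation identity $\sum_k \binom{n}{k}^2 |G|^k k! = |\Gwr R_n|$ closes the estimate.
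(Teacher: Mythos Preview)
Your approach is essentially identical to the paper's: same $\D$-class and maximal-subgroup description of $\Gwr R_n$, same application of Theorem \ref{ThmMainInv}, same meet-semilattice/join-irreducible argument via Theorem \ref{ThmBjork} with $n^2|G|$ join-irreducibles, and the same observation that the forward bound relative to the same induced $\Y$ is already in \cite{InvSemiFFT}.

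The one place you are imprecise is in invoking Rockmore's bound only in the asymptotic form $O(|G\wr S_k|\log^4|G\wr S_k|)$. Since $\log|G\wr S_k|=\Theta(k\log k)$ for fixed $G$, this yields $O(|G|^k k!\cdot k^4\log^4 k)$, and after summing and replacing $k$ by $n$ you get $n^4\log^4 n\cdot|\Gwr R_n|$; the extra $\log^4 n$ does \emph{not} ``absorb'' --- it leaves a $(\log\log|\Gwr R_n|)^4$ factor, so the stated $O(|\Gwr R_n|\log^4|\Gwr R_n|)$ bound does not follow from that form alone. The paper sidesteps this by quoting Rockmore's explicit estimate
\[
\Tinv(\IRR(\Gwr S_k)) \leq |\Gwr S_k|\Bigl(|G|\tfrac{k(k+1)}{2}+2^h\tfrac{k^2(k+1)^2}{4}+1\Bigr),
\]
which is $|G\wr S_k|\cdot O(k^4)$ for fixed $G$ (with $h$ the number of conjugacy classes of $G$) and sums cleanly to $O(|\Gwr R_n|\log^4|\Gwr R_n|)$. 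Cite that precise bound and your argument goes through verbatim.
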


\begin{proof}Recall that the {\em symmetric group wreath product} $\Gwr S_k$ is group of $k\times k$ matrices with entries in $G\uplus\{0\}$ with exactly one entry from $G$ in each row and column. For $x\in \Gwr R_n$, let $\rk(x)$ denote the number of rows (or columns) of $x$ with an entry in $G$. The idempotents of $\Gwr R_n$ are the restrictions of the identity matrix, and if $e\in \Gwr R_n$ is idempotent with $\rk(e)=k$, then the maximal subgroup $G_e$ at $e$ is isomorphic to $\Gwr S_k$ \cite{InvSemiFFT}.

The natural partial order $\leq$ on $\Gwr R_n$ can be described in the following manner. For $s,t\in \Gwr R_n$, we have $s\leq t$ if and only if $s$ may be obtained from $t$ by replacing entries of $t$ with $0$. The $\D$-classes of $\Gwr R_n$ are $D_0 ,\ldots, D_n$, where $D_k=\{x\in \Gwr R_n:\rk(x)=k\}$, so $D_k$ contains $\binom{n}{k}$ idempotents \cite{InvSemiFFT}. It is easy to see that $|\Gwr S_k|=k!|G|^k$, 
\[
|\Gwr R_n| = \sum_{k=0}^n \binom{n}{k}^2 k! |G|^k,
\]
and $n=O(\log |\Gwr R_n|)$.

Let $\IRR(\Gwr S_k)$ denote the complete set of inequivalent, irreducible representations of the symmetric group wreath product algebra $\C \Gwr S_k$ constructed in \cite{DanWreath}. Let $\Y$ be the set of representations of $\C \Gwr R_n$ induced by the $\IRR(\Gwr S_k)$, by taking $e_k$ to be the partial identity on $\{1,2,\ldots,k\}$ (thought of as a rook matrix), and taking $p_a$ (for any idempotent $a\in D_k$) to be the unique order-preserving bijection from $\ran(e_k)=\dom(e_k)$ to $\ran(a)=\dom(a)$ (thought of as a rook matrix). $\Y$ is the set of representations we used in \cite{InvSemiFFT} for constructing our $O(|\Gwr R_n| \log^4 |\Gwr R_n|)$-complexity Fourier transform, and we now show that the inverse Fourier transform of an arbitrary element $f\in \C \Gwr R_n$ expressed relative to $\Y$ can also be computed in the stated number of operations.

In \cite{DanWreath} it is shown that, if $G$ has $h$ conjugacy classes, then
$$\Tinv(\IRR(\Gwr S_k)) \leq |\Gwr S_k| \left( |G|\frac{k(k+1)}{2} + 2^h\frac{k^2(k+1)^2}{4}+1  \right).$$
Note that $|G|$ and $h$ are constants with respect to $n$. By Theorem \ref{ThmMainInv}, the number of operations required to compute the inverse Fourier transform of $f$ is no more than  
\begin{align*}
& \Cplx(\mu_{\Gwr R_n}) + \sum_{k=0}^n \binom{n}{k}^2 |\Gwr S_k| \left( |G|\frac{k(k+1)}{2} + 2^h\frac{k^2(k+1)^2}{4}+1  \right) \\
&\leq \Cplx(\mu_{\Gwr R_n}) + \left( |G|\frac{n(n+1)}{2} + 2^h\frac{n^2(n+1)^2}{4}+1  \right) \sum_{k=0}^n \binom{n}{k}^2 k!|G|^k \\
&\leq \Cplx(\mu_{\Gwr R_n}) + O(|\Gwr R_n| \log^4 |\Gwr R_n|).
\end{align*}

To handle the $\Cplx(\mu_{\Gwr R_n})$ term, we note that  $\Gwr R_n$ is a meet-semilattice, where the meet $x\meet y$ of two elements $x,y\in \Gwr R_n$ is given by the maximal common restriction of $x$ and $y$. Specifically, the rows and columns of the elements of $\Gwr R_n$ are indexed by $\{1,2,\ldots,n\}$, and for $i,j\in \{1,2,\ldots,n\}$,
\[
(x\meet y)_{i,j}=
\begin{cases}
x_{i,j} &\textup{if } x_{i,j}=y_{i,j}; \\
0 &\textup{otherwise.}
\end{cases}
\]
Let $n\geq 1$ and let $L$ denote $(\Gwr R_n,\leq)$ with a formal maximal element adjoined. The join-irreducibles of $L$ are the elements of $\Gwr R_n$ with exactly one entry in $G$, of which there are $|G|n^2$. Theorem \ref{ThmBjork} applies, and yields $\Cplx(\mu_{\Gwr R_n}) = O(|\Gwr R_n||G|n^2) = O(|\Gwr R_n| \log^2 |\Gwr R_n|)$.
\end{proof}

\subsection{The planar rook monoid} 
\label{SecPlanarRookFFT}
Any element $\sigma\in R_n$ can be represented by a graph consisting of two rows of $n$ vertices, where vertex $i$ in the first row is connected by a line segment to vertex $j$ in the second row if $\sigma(i)=j$. Call $\sigma\in R_n$ {\em planar} if this representation of $\sigma$ has no crossing edges. The {\em planar rook monoid} $P_n$ is the submonoid of $R_n$ consisting of its planar elements. Equivalently, $P_n$ is the collection of order-preserving injective partial functions from $\{1,2,\ldots,n\}$ to $\{1,2,\ldots,n\}$. Since the composition of two planar elements is planar and the inverse of a planar element is planar, $P_n$ is an inverse semigroup. The representation theory of $P_n$ was worked out in \cite{PlanarRook}. Here we approach the representation theory of $P_n$ through Theorem \ref{ThmRepInducing} to prove the following theorem.

\begin{thm}
\label{ThmPlanarRookFFT}
There exists a complete set $\Y$ of inequivalent irreducible representations of $\C P_n$ such that the Fourier transform and the inverse Fourier transform relative to $\Y$ of an arbitrary element $f\in \C P_n$ can be computed in $O(|P_n|\log^2 |P_n|)$ operations.
\end{thm}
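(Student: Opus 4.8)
The plan is to work out the $\D$-class and maximal subgroup structure of $P_n$, observe that every maximal subgroup is trivial, and then read off both complexity bounds from Theorem \ref{ThmFFTMainResult}, Theorem \ref{ThmMainInv}, and Theorem \ref{ThmBjork}. First I would record the structure of $P_n$. As in $R_n$, the idempotents of $P_n$ are exactly the restrictions of the identity map, so an idempotent $e\in P_n$ may be identified with a subset of $\{1,2,\ldots,n\}$. Two elements of $P_n$ are $\D$-related if and only if they have the same rank: given $\sigma,\tau\in P_n$ of rank $k$, the unique order-preserving bijection from $\ran(\sigma)$ to $\ran(\tau)$ lies in $P_n$ and witnesses $\sigma\D\tau$, while $\rk(\sigma)=\rk(\tau)$ is forced whenever $\sigma\D\tau$. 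Hence the $\D$-classes are $D_0,\ldots,D_n$ with $D_k$ the set of rank-$k$ elements, and $D_k$ contains $\binom{n}{k}$ idempotents. The crucial point is that if $e\in D_k$ is idempotent then the maximal subgroup $G_e=\{s\in P_n:\dom(s)=\ran(s)=e\}$ consists of the order-preserving bijections of the $k$-element set $\ran(e)$ onto itself, and the only such map is the identity; thus $G_e$ is trivial. So every $G_k$ is trivial, $\C G_k=\C$, and by Theorem \ref{ThmRepInducing} the induced set $\Y$ has exactly one representation $\bar\rho_k$ per $\D$-class, of dimension $r_k=\binom{n}{k}$. Counting rank-$k$ elements (a domain, a range, and then a forced order-preserving bijection) gives $|P_n|=\sum_{k=0}^n\binom{n}{k}^2=\binom{2n}{n}\geq 2^n$, so $n=O(\log|P_n|)$.

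Next I would invoke Theorem \ref{ThmFFTMainResult} and Theorem \ref{ThmMainInv}. Since the Fourier transform and the inverse Fourier transform on the trivial group are the identity operation, $\T(\IRR(G_k))=\Tinv(\IRR(G_k))=0$ for every $k$, so these theorems give that the Fourier transform of an $f\in\C P_n$ relative to $\Y$ costs at most $\Cplx(\zeta_{P_n})$ operations and the inverse Fourier transform relative to $\Y$ costs at most $\Cplx(\mu_{P_n})$ operations. Concretely, relative to $\Y$ the forward transform is just the zeta transform of $f$ followed by reading the values $g(s)$, $s\in D_k$, into the entries of $\hat f(\bar\rho_k)$, and the inverse transform is this read-off reversed followed by the M\"obius transform. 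So it remains only to bound $\Cplx(\zeta_{P_n})$ and $\Cplx(\mu_{P_n})$.

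For this I would apply Theorem \ref{ThmBjork}. One checks that $(P_n,\leq)$ is a meet-semilattice: for $\sigma,\tau\in P_n$ the maximal common restriction of $\sigma$ and $\tau$ is again an order-preserving injective partial function, hence lies in $P_n$, and it is readily seen to be the meet $\sigma\meet\tau$. Let $L$ be $(P_n,\leq)$ with a formal maximal element adjoined (needed only for $n\geq 2$; when $n\leq 1$, $P_n$ is already a lattice). A rank-$k$ element of $P_n$ covers exactly its $k$ rank-$(k-1)$ restrictions, so the only join-irreducibles of $L$ lying in $P_n$ are the rank-$1$ elements, of which there are $n^2$; allowing also for the adjoined maximum, $L$ has $O(n^2)$ join-irreducibles. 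Theorem \ref{ThmBjork} then yields $\Cplx(\zeta_{P_n})=O(|P_n|n^2)$ and $\Cplx(\mu_{P_n})=O(|P_n|n^2)$, which is $O(|P_n|\log^2|P_n|)$ since $n=O(\log|P_n|)$. Combining with the previous paragraph proves the theorem. The only conceptual input is the observation that $P_n$ has trivial maximal subgroups, which wipes out the group-Fourier-transform contribution and makes the zeta/M\"obius transform the entire cost; the remaining work --- verifying the semilattice structure and enumerating the join-irreducibles of $L$ --- is routine, and is the main thing that has to be carried out carefully.
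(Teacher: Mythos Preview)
Your proposal is correct and follows essentially the same approach as the paper: identify the $\D$-classes by rank, note that the maximal subgroups are trivial so the group-transform contributions in Theorems \ref{ThmFFTMainResult} and \ref{ThmMainInv} vanish, and then bound $\Cplx(\zeta_{P_n})$ and $\Cplx(\mu_{P_n})$ via Theorem \ref{ThmBjork} by checking that $(P_n,\leq)$ is a meet-semilattice whose associated lattice has $n^2$ join-irreducibles. Your write-up is slightly more explicit in places (e.g., the bound $\binom{2n}{n}\geq 2^n$ and the covering argument for join-irreducibles), but the structure and ingredients match the paper's proof exactly.
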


\begin{proof}For $s,t\in P_n$, if $\rk(s)\neq \rk(t)$ then $s$ and $t$ are certainly not $\D$-related. If $\rk(s)=\rk(t)$, then taking $x\in P_n$ to be the unique order-preserving bijection from $\ran(s)$ to $\ran(t)$ shows that $s\D t$. The $\D$-classes of $P_n$ are therefore $D_0 ,\ldots, D_n$, where $D_k$ is the set of elements of $P_n$ of rank $k$. 

The idempotents of $P_n$ are precisely those of $R_n$ (so $D_k$ has $\binom{n}{k}$ idempotents), and for all $k$, if $e\in D_k$ is idempotent, then $G_e\cong \Z_1$. It follows that
\[
|P_n| = \sum_{k=0}^n \binom{n}{k}^2,
\]
so $n=O(\log |P_n|)$. Let $e_k$ denote the partial identity on $\{1,2,\ldots,k\}$, so $G_k\cong \Z_1$ (whose Fourier transform is trivial), and for any idempotent $a\in D_k$, let $p_a$ be the unique order-preserving bijection from $\ran(e_k)$ to $\ran(a)$. Let $\Y$ denote the associated set of induced representations of $\C P_n$. We note that $\Y$ coincides with the representations written down in \cite{PlanarRook}. Theorems \ref{ThmFFTMainResult} and \ref{ThmMainInv} then imply that the Fourier transform and inverse Fourier transform relative to $\Y$ of an arbitrary element $f\in \C P_n$ can be computed in $\Cplx(\zeta_{P_n})$ and $\Cplx(\mu_{P_n})$ operations, respectively.

$(P_n,\leq)$ is a meet-semilattice, where the meet $x\meet y$ of $x,y\in P_n$ is the meet of $x,y$ in $R_n$. (See the proof of Theorem \ref{RnInversionResult}. The main observation here is simply that the restriction of a planar map is planar.) Let $L$ denote the lattice obtained by adjoining a formal maximal element to $P_n$. The join-irreducibles of $L$ are the $n^2$ elements of $P_n$ of rank one, so by Theorem \ref{ThmBjork} we have $\Cplx(\zeta_{P_n}), \Cplx(\mu_{P_n}) = O(|P_n|n^2) = O(|P_n|\log^2 |P_n|)$.
\end{proof}

\subsection{Inverse semigroup generalizations of the cyclic group}
\label{SecCnAndRotn}

We now define and study the Fourier transform on two natural inverse semigroup analogues of the cyclic group $\Z_n$, whose Fourier transform has been enormously useful in applications. We call these analogues the {\em partial cyclic shift monoid} and the {\em partial rotation monoid}. 
$\Z_n$ can be viewed as the group of cyclic shifts of an $n$-set or, equivalently, the group of rotations of $n$ equally spaced points on a circle. In this section we take our set of equivalence class representatives for the integers mod $n$ to be $\{1,2,\ldots,n\}$. Following Lawson \cite[pp.\ 17]{Lawson}, we regard inverse semigroups as collections of partial symmetries, where a partial symmetry of a structure is a structure-preserving bijection between two of its subsets. This motivates the following definitions.

\begin{defn}Let $S,T\subseteq\{1,2,\ldots,n\}$ with $|S|=|T|$. Say $S=\{s_1<s_2<\cdots < s_k\}$ and $T=\{t_1 < t_2 < \cdots < t_k\}$. We say $\sigma$ is a {\em cyclic shift from $S$ to $T$} if $\sigma:S\rightarrow T$ a bijection, where $\sigma(s_1)=t_j$ implies $\sigma(s_{r}) = t_{j+r-1 \pmod{k}}$ for all $r\in \{1,2,\ldots,k\}$.
\end{defn}
We note that the empty bijection is a cyclic shift, and that $\Z_n$ is the group of cyclic shifts from $\{1,2,\ldots,n\}$ to $\{1,2,\ldots,n\}$.

\begin{defn}The {\em partial cyclic shift monoid} $C_n$ is the subset of $R_n$ consisting of all cyclic shifts from $S$ to $T$, as $S$ and $T$ range across the subsets of $\{1,2,\ldots,n\}$.
\end{defn}

Note that we only have a cyclic shift from $S$ to $T$ if $|S|=|T|$. Of course, the identity of $C_n$ is the identity of $R_n$.

\begin{prop}$C_n$ is an inverse semigroup.
\end{prop}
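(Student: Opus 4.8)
The plan is to show directly that $C_n$ is closed under composition and that each element has an inverse (of the required form) within $C_n$; since $C_n \subseteq R_n$ and $R_n$ is an inverse semigroup, this suffices. The key structural observation is that a cyclic shift is completely determined by its domain $S$, its range $T$ (with $|S|=|T|$), and the single value $\sigma(s_1)$, where $s_1 = \min S$. Indeed, writing $S = \{s_1 < \cdots < s_k\}$ and $T = \{t_1 < \cdots < t_k\}$, once we know $\sigma(s_1) = t_j$, the rest of $\sigma$ is forced by the wrap-around rule $\sigma(s_r) = t_{j+r-1 \pmod k}$. So $C_n$ is a set of ``rotations'' parametrized by a choice of two equal-size subsets and a cyclic offset.

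First I would dispense with the degenerate cases: the empty map lies in $C_n$ (noted in the text), and it is the zero of $R_n$, so any product involving it is the empty map and hence in $C_n$; similarly the identity of $R_n$ is a cyclic shift from $\{1,\dots,n\}$ to itself, so $C_n$ is a submonoid-candidate. Next I would verify closure: given $\sigma$ a cyclic shift from $S$ to $T$ and $\gamma$ a cyclic shift from $U$ to $V$, the composite $\sigma\gamma$ has domain $\gamma^{-1}(T\cap V)$ and range $\sigma(T\cap V)$. The heart of the argument is to check that $\sigma\gamma$ restricted to this domain is again a cyclic shift between its (equal-size) domain and range. For this I would argue that both $\gamma^{-1}$ (restricted to $T \cap V$) and $\sigma$ (restricted to $T\cap V$) are ``locally cyclic'' in the sense that they send the sorted list of their domain to the sorted list of their range with a fixed cyclic offset — this is immediate for $\gamma^{-1}$ from the definition applied to $\gamma$, and for $\sigma$ one observes that the restriction of a cyclic shift to any subset of its domain need not itself be a cyclic shift in general, so some care is needed here. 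The cleanest route is: a partial injection $\tau: A \to B$ (with $A$ sorted as $a_1 < \cdots < a_m$, $B$ sorted as $b_1 < \cdots < b_m$) is a cyclic shift iff there is an integer $c$ with $\tau(a_r) = b_{r+c \pmod m}$ for all $r$. One then shows the composition of two such maps is of the same type by tracking the offsets, after reindexing to the common set $T \cap V$; the offsets add (modulo the relevant sizes), though one must be attentive that the three sets $U \supseteq \gamma^{-1}(T\cap V)$, $T \cap V$, and $T \supseteq \sigma^{-1}(\dots)$ have different sizes unless $T \cap V$ equals the full sets, so the "offsets add" slogan has to be made precise by restricting first.

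For inverses: if $\sigma \in C_n$ is a cyclic shift from $S$ to $T$, I would check that the set-theoretic inverse $\sigma^{-1}: T \to S$ is a cyclic shift from $T$ to $S$ — this follows directly from the defining condition, which is symmetric in $S$ and $T$ once one solves $\sigma(s_r) = t_{j+r-1}$ for $s_r$ in terms of $t_\ell$. Since $\sigma^{-1}$ (as an element of $R_n$) is the unique element $y$ of $R_n$ with $\sigma y \sigma = \sigma$ and $y\sigma y = y$, and we have just shown $\sigma^{-1} \in C_n$, it follows that $C_n$ is closed under taking inverses in $R_n$. Combining closure under composition and under inverses with the fact that $R_n$ is an inverse semigroup (Theorem \ref{ThmCayleyInvSemi} and the surrounding discussion), we conclude that $C_n$ is an inverse sub-semigroup of $R_n$, hence an inverse semigroup.

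I expect the main obstacle to be the closure-under-composition step, specifically the bookkeeping showing that the restriction of a cyclic shift to an arbitrary subset of its domain composes correctly. The subtlety is that restricting a cyclic shift does not generally yield a cyclic shift on the restricted set, so one cannot naively say "cyclic shift composed with cyclic shift is a cyclic shift"; instead the argument must pass through the common intersection $T \cap V$, restrict $\sigma$ and $\gamma^{-1}$ to it (where they \emph{are} cyclic shifts, since restricting to the range side is harmless), and then observe that the composition of two honest cyclic shifts between three sets of the \emph{same} size adds offsets mod $k$. Making this precise — and in particular justifying why restricting the range of $\sigma$ down to $T\cap V$ keeps it a cyclic shift — is the one place where a genuine (if short) verification is needed rather than a formal manipulation.
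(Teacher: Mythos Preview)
Your overall strategy matches the paper's: since $C_n\subseteq R_n$, it suffices to show closure under composition and inverses, and the inverse argument is fine. The problem is in the closure step, where you assert twice that ``the restriction of a cyclic shift to any subset of its domain need not itself be a cyclic shift in general.'' This is false, and it is precisely the fact the paper's proof rests on. If $\sigma$ is a cyclic shift from $S=\{s_1<\cdots<s_k\}$ to $T=\{t_1<\cdots<t_k\}$ with $\sigma(s_r)=t_{j+r-1\pmod k}$ and you restrict to $S'=\{s_{i_1}<\cdots<s_{i_m}\}$, then the images $t_{j+i_a-1\pmod k}$ wrap around at most once as $a$ runs from $1$ to $m$; sorting $T'=\sigma(S')$ amounts to a single cyclic rotation of the index list, and one checks directly that $s'_a\mapsto t'_{a+c\pmod m}$ for a fixed $c$. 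So $\sigma e\in C_n$ for every idempotent $e$, and the paper then finishes by writing $\sigma\tau=\sigma'\tau'$ with $\tau'=\tau|_{\dom(\sigma\tau)}$, $\sigma'=\sigma|_{\ran(\tau')}$, and $\dom(\sigma')=\ran(\tau')$, reducing to the easy case of composable cyclic shifts.

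Your attempted workaround is also internally inconsistent: you claim domain restriction can destroy the cyclic-shift property but range restriction is ``harmless,'' yet range restriction of $\sigma$ is domain restriction of $\sigma^{-1}$, and you have already argued that inverses of cyclic shifts are cyclic shifts. (Separately, the relevant intersection for $\sigma\gamma$ with $\gamma:U\to V$ and $\sigma:S\to T$ is $S\cap V$, not $T\cap V$.) Once you correct the restriction claim, your argument collapses to exactly the paper's.
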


\begin{proof}Since $C_n\subseteq R_n$, it suffices to show that $C_n$ is closed under composition and inverses. It is clear that the inverse of a cyclic shift is a cyclic shift. To show $C_n$ is closed under composition, we begin by noting that the restriction of cyclic shift is a cyclic shift---that is, if $e\in R_n$ is idempotent and $\sigma\in C_n$, then $\sigma e \in C_n$. We also note that if $\tau$ is a cyclic shift from $A$ to $B$ and $\sigma$ is a cyclic shift from $B$ to $C$ (where $A,B,C \subseteq \{1,2,\ldots,n\}$ with $|A|=|B|=|C|$), then $\sigma \tau$ is a cyclic shift from $A$ to $C$. 

If $\sigma\in R_n$ and $S\subseteq \{1,2,\ldots,n\}$, let $\sigma|_{S}$ denote $\sigma e$, where $e$ is the partial identity on $S$. That is, $\sigma|_{S}$ is the map given by restricting the domain of $\sigma$ to $\dom(\sigma)\cap S$.
Suppose then that $\sigma$ and $\tau$ are cyclic shifts. Let $\tau'=\tau|_{\dom(\sigma \tau)}$ and $\sigma ' = \sigma |_{\ran(\tau')}$. Then $\sigma'$ and $\tau'$ are cyclic shifts, $\sigma \tau = \sigma' \tau '$, and $\dom(\sigma') = \ran(\tau')$. It follows that $\sigma \tau$ is a cyclic shift (from $\dom(\sigma\tau)=\dom(\tau')$ to $\ran(\sigma\tau)=\ran(\sigma')$).
\end{proof}

Thus $C_n$ is an inverse semigroup analogue of $\Z_n$ in the setting of partial symmetries. We analyze the Fourier transform on $C_n$ in Section \ref{SubsecPartialCyclicShift}.

Next we consider partial rotations, which lead to a different inverse semigroup analogue of $\Z_n$. Place equally-spaced points $\{1,2,\ldots,n\}$ along the perimeter of a circle, and view $\Z_n$ as the group of bijections from $\{1,2,\ldots,n\}$ to $\{1,2,\ldots,n\}$ by rotation of the circle. A {\em partial rotation} is a bijection between subsets $S,T\subseteq \{1,2,\ldots,n\}$ obtained by the restriction of a rotation. Specifically:

\begin{defn}Let $r\in S_n\subseteq R_n$ be the $n$-cycle given by $r(i)=i+1 \pmod n$. Let $\sigma\in R_n$. We say $\sigma$ is a {\em partial rotation} if $\sigma=r^k f$ for some $k\in \Z$ and some idempotent $f\in R_n$. The {\em partial rotation monoid} $\Rot_n$ is the set of partial rotations in $R_n$. 
\end{defn}

Let $e$ denote the identity of $R_n$. Clearly $e$ is the identity of $\Rot_n$, and we find $\Z_n$ in $\Rot_n$ as the set of elements of the form $r^ke$.

\begin{lem}
\label{LemRotn_rk_on_either_side}
Let $\sigma\in R_n$. Then $\sigma\in \Rot_n$ if and only if $\sigma=gr^k$ for some $k\in \Z$ and some idempotent $g\in R_n$. Furthermore, if $\sigma \in \Rot_n$, then $\sigma=r^kf=er^k$ for some $k\in \Z$ and idempotents $e,f\in R_n$. 
\end{lem}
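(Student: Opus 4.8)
The plan is to prove the "either side" characterization by a direct computation with $n$-cycles and idempotents, exploiting the fact that $r$ is a genuine bijection of $\{1,2,\ldots,n\}$ (an element of $S_n$, hence a unit of $R_n$), so conjugation by powers of $r$ sends idempotents to idempotents. The key observation is that if $f\in R_n$ is idempotent, say $f$ is the partial identity on a set $A\subseteq\{1,2,\ldots,n\}$, then $r^k f r^{-k}$ is the partial identity on $r^k(A)$, which is again an idempotent of $R_n$; write this idempotent as $g$. Then from $\sigma = r^k f$ we get $\sigma = r^k f r^{-k} r^k = g r^k$, which proves one inclusion. The reverse inclusion $\sigma = g r^k \implies \sigma = r^k f$ for some idempotent $f$ is symmetric: take $f = r^{-k} g r^k$.

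For the second sentence, I would start from an arbitrary $\sigma\in\Rot_n$, so $\sigma = r^k f$ for some $k\in\Z$ and some idempotent $f$. I already have $\sigma = g r^k$ with $g = r^k f r^{-k}$ idempotent from the first part, which handles the "$\sigma = er^k$" form (relabeling $g$ as $e$). So the content is just assembling the two equalities $\sigma = r^k f$ and $\sigma = g r^k$ into the single displayed chain $\sigma = r^k f = er^k$ with $e = g$. One should be slightly careful that $r^{-k}$ makes sense: since $r$ has order $n$ in $S_n$, $r^{-k} = r^{n-k \bmod n}$ is again a power of $r$, so all manipulations stay inside the monoid generated by $r$ together with the idempotents, and no literal inverses outside $R_n$ are needed.

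The only mild subtlety — and the step I would flag as the place to be careful rather than a genuine obstacle — is verifying that $r^k f r^{-k}$ is idempotent and computing it explicitly: one checks $(r^k f r^{-k})(r^k f r^{-k}) = r^k f f r^{-k} = r^k f r^{-k}$ using $f^2 = f$ and $r^{-k}r^k = e$, and one identifies its domain/range as $r^k(\dom f)$. Everything else is bookkeeping with associativity and the relation $r^n = e$. No deep structural input is required; the characterization is essentially the statement that the units $r^k$ normalize the semilattice of idempotents of $R_n$, which is immediate since conjugation by a permutation just relabels the underlying set.
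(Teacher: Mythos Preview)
Your argument is correct and is essentially identical to the paper's proof: the paper also proves both directions by setting $g = r^k f r^{-k}$ (respectively $f = r^{-k} g r^k$) and noting this is idempotent. You have simply supplied more detail (the explicit check that conjugation by $r^k$ preserves idempotents, and the remark that $r^{-k}$ stays in $R_n$) than the paper's one-line proof.
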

\begin{proof}
If $\sigma=r^k f$ for some idempotent $f\in R_n$, then $\sigma=g r^k$ for the idempotent $g=r^{k} f r^{-k}$, and if $\sigma=gr^k$ then $\sigma=r^k f$ for the idempotent $f=r^{-k} g r^k$. 
\end{proof}

\begin{prop}$\Rot_n$ is an inverse semigroup.
\end{prop}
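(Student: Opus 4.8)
The plan is to exploit the fact that $\Rot_n$ is a subset of $R_n$, which is already known to be an inverse semigroup. A subset $T$ of an inverse semigroup that is closed under the semigroup multiplication and under the (unique) inversion map $x\mapsto x^{-1}$ is itself an inverse semigroup, with the same inverses. So it suffices to check two things: that $\Rot_n$ is closed under composition, and that $\sigma\in\Rot_n$ implies $\sigma^{-1}\in\Rot_n$. Lemma \ref{LemRotn_rk_on_either_side} will do most of the bookkeeping, since it lets us recognize an element as lying in $\Rot_n$ from either of the canonical forms $r^k f$ or $g r^k$ (with $f,g$ idempotent).

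For inverses, I would simply compute: if $\sigma = r^k f$ with $f\in R_n$ idempotent, then $\sigma^{-1} = f^{-1}(r^k)^{-1} = f\, r^{-k}$, using that $f^{-1}=f$ (idempotents are self-inverse) and $(r^k)^{-1} = r^{-k}$ (as $r\in S_n$ is invertible). This is of the form $g\,r^\ell$ with $g=f$ idempotent and $\ell=-k\in\Z$, so Lemma \ref{LemRotn_rk_on_either_side} gives $\sigma^{-1}\in\Rot_n$.

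For composition, take $\sigma = r^k f$ and $\tau = r^\ell g$ with $f,g$ idempotent, and "straighten" the product by moving all powers of $r$ to the left. The key observation is that conjugating an idempotent of $R_n$ by the permutation $r^\ell$ produces another idempotent: idempotents of $R_n$ are partial identities on subsets, and $r^{-\ell}(\text{partial identity on }A)r^\ell$ is the partial identity on $r^{-\ell}(A)$. Hence $f r^\ell = r^\ell f'$ with $f' = r^{-\ell} f r^\ell$ idempotent, so $\sigma\tau = r^{k+\ell} f' g$. Finally $f'g$ is a product of idempotents of $R_n$, and since idempotents of an inverse semigroup commute and are closed under multiplication, $f'g$ is idempotent; thus $\sigma\tau = r^{k+\ell}(f'g)\in\Rot_n$. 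The only mildly delicate point — really the sole obstacle — is this straightening step, i.e. verifying that conjugation by $r^\ell$ preserves idempotency and that the product of two idempotents is again idempotent; both are immediate from the basic inverse-semigroup facts recalled in Section \ref{SubSecISGS} together with the explicit description of the idempotents of $R_n$ as restrictions of the identity map.
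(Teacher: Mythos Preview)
Your proof is correct and follows essentially the same approach as the paper: reduce to closure under composition and inverses in $R_n$, use Lemma~\ref{LemRotn_rk_on_either_side} to recognize $\sigma^{-1}=f r^{-k}$ as a partial rotation, and handle products by conjugating an idempotent past a power of $r$ and then using that the product of two idempotents is idempotent. The only cosmetic difference is that the paper writes $\sigma=gr^k$ and $\tau=r^jf$ (so the two rotation powers are already adjacent), whereas you write both in the form $r^k f$ and must first commute $f$ past $r^\ell$; the underlying computation is the same.
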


\begin{proof}Since $\Rot_n\subseteq R_n$, it suffices to show that $\Rot_n$ is closed under inverses and composition. From Lemma \ref{LemRotn_rk_on_either_side} it follows that the inverse $\sigma^{-1}=f r^{-k}$ of a partial rotation $\sigma=r^k f$ is a partial rotation. To show $\Rot_n$ is closed under composition, if $\sigma=gr^k$ and $\tau = r^j f$ for idempotents $f,g\in R_n$, then $\sigma \tau = gr^kr^j f = (gr^{k+j})f = (r^{k+j}h) f = r^{k+j}(hf)$ for the idempotent $h = r^{-k-j}gr^{k+j}$. Since $h$ and $f$ are idempotent, so is $hf$, so $\sigma \tau$ is a partial rotation.
\end{proof}

Therefore, $\Rot_n$ is also an inverse semigroup analogue of $\Z_n$ in the setting of partial symmetries. We analyze the Fourier transform on $\Rot_n$ in Section \ref{SubsecPartialRotation}.

\begin{rmk}We note that $\Rot_n\subseteq C_n$. We also note that, even though $C_n$ and $\Rot_n$ are inverse semigroup generalizations of $\Z_n$ and the maximal subgroups of $C_n$ and $\Rot_n$ are all abelian, $C_n$ and $\Rot_n$ are themselves non-abelian for $n\geq 2$.
\end{rmk}

\subsubsection{The Fourier transform on the partial cyclic shift monoid}

\label{SubsecPartialCyclicShift} 

We now analyze the complexity of the Fourier transform on $C_n$.

\begin{thm}
\label{ThmFFTCn}
There exists a complete set of inequivalent, irreducible representations $\Y$ of $\C C_n$ such that the Fourier transform and the inverse Fourier transform relative to $\Y$ of an arbitrary element $f\in \C C_n$ can be computed in $O(|C_n| \log^2 |C_n|)$ operations. 
\end{thm}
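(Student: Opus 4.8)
The plan is to apply the general machinery of Theorem \ref{ThmFFTMainResult} and Theorem \ref{ThmMainInv} to $S=C_n$, reducing the problem to two tasks: (i) bounding $\Cplx(\zeta_{C_n})$ and $\Cplx(\mu_{C_n})$ via the Bj\"orklund et al.\ circuit of Theorem \ref{ThmBjork}, and (ii) bounding the Fourier and inverse Fourier transform costs on the maximal subgroups of $C_n$. First I would work out the structure of $C_n$ under Green's $\D$-relation. As with $R_n$ and $P_n$, two elements of $C_n$ are $\D$-related if and only if they have the same rank: if $\rk(\sigma)=\rk(\tau)=k$, the unique order-preserving bijection from $\ran(\sigma)$ to $\ran(\tau)$ is itself a cyclic shift (it is the ``identity'' cyclic shift on a $k$-set), so it lies in $C_n$ and witnesses $\sigma\D\tau$. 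Hence the $\D$-classes are $D_0,\dots,D_n$ with $D_k$ the rank-$k$ elements, each containing $\binom{n}{k}$ idempotents (the restrictions of the identity, exactly as in $R_n$). The maximal subgroup at a rank-$k$ idempotent $e$ is $\{\sigma\in C_n:\dom(\sigma)=\ran(\sigma)=e\}$, which is precisely the group of cyclic shifts from the underlying $k$-set to itself — that is, a cyclic group $\Z_k$. So $G_k\cong \Z_k$, and counting gives $|C_n|=\sum_{k=0}^n\binom{n}{k}^2 k$ (with the $k=0$ term interpreted as $1$), whence $n=O(\log|C_n|)$.

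Next I would fix the induced set of representations $\Y$: take $e_k$ to be the partial identity on $\{1,\dots,k\}$, take $p_a$ (for an idempotent $a\in D_k$) to be the unique order-preserving bijection from $\{1,\dots,k\}$ to the support of $a$ (which is a cyclic shift, hence in $C_n$), and take $\IRR(G_k)=\IRR(\Z_k)$ to be the usual characters of $\Z_k$, whose (inverse) Fourier transform is the classical DFT and can be computed in $O(k\log k)$ operations (Bluestein, \cite{Bluestein}), so $\T(\IRR(\Z_k)),\Tinv(\IRR(\Z_k))=O(k\log k)$. Then Theorem \ref{ThmFFTMainResult} and Theorem \ref{ThmMainInv} bound the forward and inverse transform costs relative to $\Y$ by
\[
\Cplx(\zeta_{C_n}) + \sum_{k=0}^n \binom{n}{k}^2 O(k\log k)
\quad\text{and}\quad
\Cplx(\mu_{C_n}) + \sum_{k=0}^n \binom{n}{k}^2 O(k\log k),
\]
respectively. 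Since $k\log k = O(n\log n)$ uniformly in $k\le n$, the sum is $O(n\log n)\sum_k\binom{n}{k}^2 = O(n\log n)\binom{2n}{n} = O(|C_n|\log^2|C_n|)$ — here I need the elementary fact that $\binom{2n}{n}=O(\sum_k\binom{n}{k}^2 k)\cdot\text{(something)}$; more carefully, $\sum_k\binom{n}{k}^2 k \ge \tfrac{n}{2}\binom{2n}{n}$ for $n\ge 1$ (each term with $k\ge n/2$ contributes), so $\binom{2n}{n} = O(|C_n|/n)$ and the group-side cost is $O(n\log n)\cdot O(|C_n|) = O(|C_n|\log^2|C_n|)$, absorbing a spurious factor of $n$. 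That bookkeeping is routine.

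The remaining — and I expect slightly delicate — point is the M\"obius/zeta transform term, which requires showing $(C_n,\leq)$ is a meet-semilattice and identifying its join-irreducibles. For $\sigma,\tau\in C_n$, the maximal common restriction $\sigma\meet\tau$ taken in $R_n$ (defined as in the proof of Theorem \ref{RnInversionResult}: $\dom(\sigma\meet\tau)=\{x:x\in\dom\sigma\cap\dom\tau,\ \sigma(x)=\tau(x)\}$) lands back in $C_n$, because any restriction of a cyclic shift is a cyclic shift (used already in the proof that $C_n$ is an inverse semigroup), and this common restriction is the meet in $R_n$, hence a fortiori the meet in the sub-poset $C_n$. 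So $(C_n,\leq)$ is a meet-semilattice. The subtle part is the count of join-irreducibles of $L=L(C_n)$, the lattice obtained by adjoining a formal maximal element. As with $R_n$ and $P_n$, the join-irreducibles of $C_n$ itself are the rank-$1$ elements — a rank-$1$ element $x\mapsto y$ covers only the empty map, and conversely any element of rank $\ge 2$ is the join of its rank-$1$ restrictions — and there are exactly $n^2$ of them (every rank-$1$ partial map is automatically a cyclic shift). Adjoining $\MAX$ adds one more join-irreducible only if $C_n$ was not already a lattice, i.e.\ only if some pair of rank-$1$ elements has no upper bound in $C_n$; either way the number $v$ of join-irreducibles of $L$ satisfies $v\le n^2+1 = O(n^2)$. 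Theorem \ref{ThmBjork} then gives $\Cplx(\zeta_{C_n}),\Cplx(\mu_{C_n}) = O(|C_n|v) = O(|C_n|n^2) = O(|C_n|\log^2|C_n|)$. Combining the two pieces yields the claimed $O(|C_n|\log^2|C_n|)$ bound for both the forward and inverse Fourier transform relative to $\Y$. The only genuine obstacle I anticipate is making the join-irreducible count airtight — in particular confirming that the rank-$1$ elements are exactly the join-irreducibles of $C_n$ and that adjoining $\MAX$ contributes at most one more — but this is a direct order-theoretic check entirely parallel to the $R_n$ case.
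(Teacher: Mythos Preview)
Your proposal is correct and follows essentially the same approach as the paper: identify the $\D$-classes by rank, recognize the maximal subgroups as $\Z_k$, induce $\Y$ from the characters of the $\Z_k$, apply Theorems \ref{ThmFFTMainResult} and \ref{ThmMainInv}, and handle the zeta/M\"obius term via Theorem \ref{ThmBjork} using the $n^2$ rank-one join-irreducibles. The only difference is bookkeeping on the group-side sum: rather than routing through $\binom{2n}{n}$, the paper simply pulls out $\log k\le\log n$ to get $\sum_{k}\binom{n}{k}^2 ck\log k \le c\log n\sum_k\binom{n}{k}^2 k = c\log n\,(|C_n|-1) = O(|C_n|\log\log|C_n|)$, which is both cleaner and sharper than your $O(|C_n|\log^2|C_n|)$ estimate for that term (though the final bound is unchanged since the M\"obius/zeta cost dominates).
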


\begin{proof}It is clear that the idempotents of $C_n$ are precisely those of $R_n$. For $s,t\in C_n$, if $\rk(s)\neq \rk(t)$ then $s$ and $t$ are not $\D$-related, and if $\rk(s)=\rk(t)$ then taking $x\in C_n$ to be the unique order-preserving bijection from $\ran(s)$ to $\ran(t)$ shows that $s\D t$. The $\D$-classes of $C_n$ are therefore $D_0 ,\ldots, D_n$, where $D_k$ consists of the rank-$k$ elements of $C_n$. It follows that $D_k$ contains $\binom{n}{k}$ idempotents. If $e\in D_k$ is idempotent, then $\rk(e)=k$ and $G_e$ is the set of all cyclic shifts from $\dom(e)$ to $\dom(e)$, which is isomorphic to $\Z_k$. 

Let $\IRR(\Z_k)$ denote a complete set of inequivalent, irreducible representations of $\C \Z_k$. (Equivalently, $\IRR(\Z_k)$ is the set of characters of $\Z_k$.) Let $\Y$ be the set of representations of $\C C_n$ induced by the $\IRR(\Z_k)$, by taking $e_k$ to be the partial identity on $\{1,2,\ldots,k\}$ and taking $p_a$ (for any idempotent $a\in D_k$) to be the unique order-preserving bijection from $\ran(e_k)$ to $\ran(a)$. We have
\[
|C_n| = 1+ \sum_{k=1}^n \binom{n}{k}^2 k,
\]
so $n=O(\log |C_n|)$. Theorems \ref{ThmFFTMainResult} and \ref{ThmMainInv} imply that the Fourier transform and inverse Fourier transform relative to $\Y$ of an arbitrary element $f\in \C C_n$ can be computed in
\[
\Cplx(\zeta_{C_n}) + \sum_{k=0}^n \binom{n}{k}^2 \T(\IRR(\Z_k))
\]
and
\begin{equation}
\label{eqCnPf2}
\Cplx(\mu_{C_n}) + \sum_{k=0}^n \binom{n}{k}^2 \Tinv(\IRR(\Z_k))
\end{equation}
operations, respectively. It is well known that there is a constant $c$ for which $\T(\IRR(\Z_k))\leq c k\log k$ and $\Tinv(\IRR(\Z_k)) \leq c k\log k$ for all $k$ \cite{Bluestein}. Thus
\begin{align*}
\Cplx(\zeta_{C_n}) + \sum_{k=0}^n \binom{n}{k}^2 \T(\IRR(\Z_k)) & \leq
\Cplx(\zeta_{C_n}) + \sum_{k=1}^n \binom{n}{k}^2 ck \log k \\ &\leq
\Cplx(\zeta_{C_n}) + c\log n \sum_{k=1}^n \binom{n}{k}^2 k \\ &=
\Cplx(\zeta_{C_n}) + O(|C_n| \log n)\\ &=
\Cplx(\zeta_{C_n}) + O(|C_n| \log \log |C_n|).
\end{align*}
Similarly, \eqref{eqCnPf2} is $\Cplx(\mu_{C_n}) + O(|C_n| \log\log |C_n|)$. 

To handle $\Cplx(\zeta_{C_n})$ and $\Cplx(\mu_{C_n})$, $(C_n,\leq)$ is a meet-semilattice, where the meet $s\meet t\in C_n$ of two elements $s,t\in C_n$ is simply the meet of $s,t$ in $R_n$. (See the proof of Theorem \ref{RnInversionResult}. The main observation here is that the restriction of a cyclic shift is a cyclic shift.) Let $L$ denote the lattice obtained by adjoining a formal maximal element to $C_n$. The join-irreducibles of $L$ are the $n^2$ elements of $C_n$ of rank one, so by Theorem \ref{ThmBjork} we have $\Cplx(\zeta_{C_n}), \Cplx(\mu_{C_n}) = O(|C_n|n^2) = O(|C_n|\log^2 |C_n|)$.
\end{proof}

\subsubsection{The Fourier transform on the partial rotation monoid} 

\label{SubsecPartialRotation}

In this section we analyze the complexity of the Fourier transform on $\Rot_n$. Before proceeding, we note that our analysis of $R_n$, $\Gwr R_n$, $P_n$ and $C_n$ thus far have been quite similar. The main reason for this is that all of the semigroups analyzed so far contain the unique order-preserving bijection from $A$ to $B$, for all $A,B \subseteq \{1,2,\ldots,n\}$ with $|A|=|B|$. This causes each of these semigroups to have $\D$-classes $D_0 ,\ldots, D_n$, where $D_k$ is the set of elements of the semigroup of rank $k$. If $S\subseteq R_n$ is an inverse semigroup, then it is clear that $x,y\in S$ can only be $\D$-related if $\rk(x)=\rk(y)$, so in this sense $R_n$, $\Gwr R_n$, $P_n$, and $C_n$ have the fewest $\D$-classes possible. Our analysis of $\Rot_n$ is different, because the unique order-preserving bijection from $A$ to $B$ is not necessarily a partial rotation. This causes $\Rot_n$ to have more than $n+1$ $\D$-classes in general.

Let $r\in \Rot_n$ be the $n$-cycle given by $r(i)=i+1 \pmod n$, and let us identify $\Z_n$ with the subgroup of $\Rot_n$ generated by $r$. (That is, we identify $\Z_n$ with the set of elements of $\Rot_n$ of full rank.) Let $\Z_n\subseteq \Rot_n$ act on the subsets of $\{1,2,\ldots,n\}$ by rotation. Denote this action by $\cdot$, so for $r^k\in \Z_n$ we have $r^k \cdot \{t_1 , t_2, \ldots, t_i\} = \{t_1+k, t_2+k, ,\ldots, t_i + k\}$, where all sums are taken mod $n$. As in Section \ref{SecMatrixAlgsOverGpAlgs}, we identify the subsets of $\{1,2,\ldots,n\}$ with the partial identities on these subsets in $\Rot_n$. It is important to note that $\cdot$ does {\em not} coincide with the multiplication in $\Rot_n$. That is, if $e\in \Rot_n$ is idempotent, then $r^k \cdot e \neq r^k e$ in general. Rather, it is straightforward to check that $r^k\cdot e=r^k e r^{-k}$. We will write the operation on $\Rot_n$ as concatenation, while reserving $\cdot$ to refer only to the action of $\Z_n$. Here is a characterization of $\D$ on $\Rot_n$.

\begin{lem}
\label{LemRot}
Let $a,b\in \Rot_n$. Then $a\D b$ if and only if there exists $k\in \Z$ such that $r^k \cdot \ran(a) = \ran(b)$. 
\end{lem}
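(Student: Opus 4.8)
\textbf{Proof plan for Lemma \ref{LemRot}.}

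The plan is to translate the abstract definition of Green's $\D$-relation on $\Rot_n$ into a concrete statement about ranges, using two facts: first, that $\dom$ and $\ran$ of a partial rotation are related by rotation, and second, that inside a single rook monoid $\D$-equivalence is witnessed by any element of the right rank with the prescribed range. First I would unwind the definition: $a\D b$ means there is $x\in\Rot_n$ with $\dom(x)=\ran(a)$ and $\ran(x)=\ran(b)$. For the forward direction, suppose such an $x$ exists. Since $x\in\Rot_n$, by Lemma \ref{LemRotn_rk_on_either_side} we may write $x=r^k g$ for some $k\in\Z$ and some idempotent $g\in R_n$; then $\dom(x)=\ran(g)=g$ (identifying idempotents with their supports) and $\ran(x)=r^k g r^{-k}=r^k\cdot\dom(x)$. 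Hence $\ran(b)=\ran(x)=r^k\cdot\dom(x)=r^k\cdot\ran(a)$, which is exactly the claimed condition.

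For the converse, suppose $r^k\cdot\ran(a)=\ran(b)$ for some $k\in\Z$. I would exhibit an explicit witness: let $e$ be the partial identity on $\ran(a)$ and set $x=r^k e\in\Rot_n$. Then $\dom(x)=\ran(e)=\ran(a)$ and $\ran(x)=r^k e r^{-k}=r^k\cdot\ran(a)=\ran(b)$, so $a\D b$ by definition. One should double-check that $\dom(r^k e)$ really equals $\ran(a)$ and not something smaller — this holds because $r^k$ is a (total) bijection of $\{1,\ldots,n\}$, so composing it after $e$ does not shrink the domain; this is the one small place where the computation needs care, but it is routine.

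I expect the main (minor) obstacle to be bookkeeping around the two conventions in play: the semigroup multiplication in $\Rot_n$ versus the rotation action $\cdot$, together with the identity $r^k\cdot e=r^ker^{-k}$ emphasized just before the lemma. Keeping these straight, and remembering the identifications $\dom(s)=s^{-1}s$, $\ran(s)=ss^{-1}$ from \eqref{EqDomRanS} and that $\ran(e)=e$ for idempotents $e$, is the only thing that requires attention; the structural content is entirely supplied by Lemma \ref{LemRotn_rk_on_either_side} and the definition of $\D$.
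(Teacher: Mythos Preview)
Your proposal is correct and is essentially the paper's own argument: both directions are handled by writing the witness $x\in\Rot_n$ in the form $r^k e$ and using $\dom(r^k e)=e$, $\ran(r^k e)=r^k e r^{-k}=r^k\cdot e$. The only cosmetic differences are that the paper also records the form $x=fr^k$ to read off $\ran(x)=f$ directly, and that the form $x=r^k g$ you invoke is the \emph{definition} of $\Rot_n$ rather than Lemma~\ref{LemRotn_rk_on_either_side}.
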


\begin{proof}
Let $a\D b$. Let $x=r^ke=fr^k \in \Rot_n$ (for some $k\in \Z$ and idempotents $e,f \in \Rot_n$) such that $\dom(x)=\ran(a)$ and $\ran(x)=\ran(b)$. Then $\dom(x)=e$ and $\ran(x)=f$, and $r^ker^{-k}=f=r^k \cdot e$, so $r^k \cdot \ran(a) = \ran(b)$.

On the other hand, let $r^k \cdot \ran(a) = \ran(b)$. Then, for $x=r^k \ran(a)$, we have $\dom(x)=\ran(a)$ and $\ran(x) = r^k \ran(a) \ran(a) r^{-k} = r^k \ran(a) r^{-k} = r^k \cdot \ran(a) = \ran(b),$ so $a\D b$.
\end{proof}

For any idempotent $e\in \Rot_n$, let $j(e)$ be the smallest positive integer $j$ such that $r^j \cdot e = e$. Equivalently, $j(e)$ is the size of the orbit of $e$ under $\cdot$. By the division algorithm (or the orbit-stabilizer theorem), $j(e)$ divides $n$. 

\begin{lem}
\label{LemRotnIdemsPerDClass}
If $e\in \Rot_n$ is idempotent, then the $\D$-class of $e$ contains $j(e)$ idempotents.
\end{lem}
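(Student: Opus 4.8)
The plan is to use Lemma \ref{LemRot} to identify the idempotents in the $\D$-class of $e$ as precisely the idempotents in the orbit of $e$ under the rotation action $\cdot$ of $\Z_n$, and then invoke the definition of $j(e)$ as the orbit size. First I would observe that the idempotents of $\Rot_n$ are a subset of the idempotents of $R_n$, so each idempotent is itself a partial identity $f$, which by \eqref{EqDomRanS} satisfies $\ran(f)=\dom(f)=f$. Thus for idempotents $e,f\in\Rot_n$, Lemma \ref{LemRot} specializes to: $e\D f$ if and only if $r^k\cdot\ran(e)=\ran(f)$ for some $k\in\Z$, i.e., if and only if $r^k\cdot e=f$ for some $k\in\Z$. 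So the idempotents $\D$-related to $e$ are exactly the elements of the orbit $\{r^k\cdot e:k\in\Z\}$ that happen to be idempotents of $\Rot_n$.

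Next I would check that every element of this orbit is in fact an idempotent of $\Rot_n$. Since $r^k\cdot e=r^k e r^{-k}$ (as noted in the discussion preceding Lemma \ref{LemRot}), and $e$ is idempotent, $r^k e r^{-k}$ is a conjugate of an idempotent and hence idempotent; moreover it is visibly the partial identity on the set $r^k\cdot\dom(e)$, which is a subset of $\{1,2,\ldots,n\}$, so it lies in $R_n$, and it equals $r^k e r^{-k}=(r^k e r^{-k})\cdot e_0$-type expression showing it is of the form $r^k f$ for an idempotent, hence lies in $\Rot_n$. (Concretely, $r^k e r^{-k}=r^k(er^{-k})$ and $er^{-k}=r^{-k}(r^k e r^{-k})$ exhibits it as $r^{-k}\cdot(\text{idempotent})$, so it is in $\Rot_n$.) Therefore the set of idempotents in the $\D$-class of $e$ is exactly the orbit $\{r^k\cdot e:k\in\Z\}$, with no elements discarded.

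Finally, since $j(e)$ is defined to be the size of this orbit under $\cdot$, the $\D$-class of $e$ contains exactly $j(e)$ idempotents. (The earlier remark that $j(e)\mid n$, via the orbit-stabilizer theorem, is not needed for the count itself.) The main point requiring care is the second step — verifying that the full rotation-orbit of $e$ consists of genuine idempotents of $\Rot_n$, so that the orbit is not merely an upper bound for the idempotent count but equals it; this hinges on the identity $r^k\cdot e=r^k e r^{-k}$ together with closure of $\Rot_n$ under the maps $\sigma\mapsto r^k\sigma r^{-k}$, which follows immediately from $\Rot_n$ being closed under composition and containing all $r^k$. I expect no real obstacle here; the argument is short once Lemma \ref{LemRot} is specialized to idempotents.
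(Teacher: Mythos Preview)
Your proposal is correct and follows essentially the same route as the paper: specialize Lemma \ref{LemRot} to idempotents (using $\ran(f)=f$), identify the idempotents $\D$-related to $e$ with the $\Z_n$-orbit of $e$ under $\cdot$, and then invoke the definition of $j(e)$ as the orbit size. Your second paragraph, verifying that every $r^k\cdot e$ is an idempotent of $\Rot_n$, is more work than needed: since $\cdot$ acts on subsets of $\{1,\ldots,n\}$ (identified with partial identities), $r^k\cdot e$ is automatically a partial identity of $R_n$, and every idempotent $f\in R_n$ lies in $\Rot_n$ via $f=r^0 f$; the paper simply takes this for granted.
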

\begin{proof}By Lemma \ref{LemRot}, if $f\in \Rot_n$ is idempotent, then $e\D f$ if and only if there exists $k\in \Z$ such that $r^k\cdot e=f$, so the idempotents to which $e$ is $\D$-related are the distinct idempotents $r\cdot e , r^2\cdot e, \ldots, r^{j(e)}\cdot e=e$. 
\end{proof}

\begin{lem}
\label{LemRotnMaxSubgps}
Let $e\in \Rot_n$ be idempotent with $\rk(e)\geq 1$. Then the maximal subgroup of $\Rot_n$ at $e$ is isomorphic to $\Z_{n/j(e)}$. 
\end{lem}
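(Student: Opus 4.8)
The plan is to identify the maximal subgroup $G_e = \{\sigma \in \Rot_n : \dom(\sigma) = \ran(\sigma) = e\}$ explicitly and exhibit an isomorphism to $\Z_{n/j(e)}$. First I would note that any $\sigma \in G_e$ is a partial rotation, so by Lemma \ref{LemRotn_rk_on_either_side} we can write $\sigma = r^k f$ for some $k \in \Z$ and some idempotent $f \in \Rot_n$; the condition $\dom(\sigma) = e$ forces $f = e$ (since $\dom(r^k e) = e$), so every element of $G_e$ has the form $r^k e$ for some $k$. The condition $\ran(\sigma) = e$ then says $r^k e r^{-k} = r^k \cdot e = e$, which by definition of $j(e)$ holds precisely when $j(e) \mid k$. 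Thus $G_e = \{ r^k e : k \in \Z,\ j(e) \mid k\}$, a set of size $n/j(e)$ (the distinct such elements correspond to $k \equiv 0, j(e), 2j(e), \ldots \pmod n$).

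Next I would set up the map $\phi: \Z_{n/j(e)} \to G_e$ by $\phi(m) = r^{m\, j(e)} e$ and check it is a well-defined bijective homomorphism. Well-definedness and bijectivity follow from the counting above together with the observation that $r^{mj(e)}e = r^{m'j(e)}e$ iff $r^{(m-m')j(e)}\cdot e = e$ iff $n \mid (m-m')j(e)$ iff $(n/j(e)) \mid (m - m')$. For the homomorphism property, I would compute $(r^{aj(e)}e)(r^{bj(e)}e)$: using $e r^{bj(e)} = r^{bj(e)}(r^{-bj(e)} e r^{bj(e)}) = r^{bj(e)}(r^{-bj(e)}\cdot e)$, and $r^{-bj(e)}\cdot e = e$ since $j(e) \mid bj(e)$, this collapses to $r^{aj(e)} r^{bj(e)} e e = r^{(a+b)j(e)} e$, which is exactly $\phi(a+b)$. (One should be slightly careful here that the rank condition $\rk(e) \geq 1$ is used only to guarantee $G_e$ is a genuine nontrivial statement is not needed — the argument goes through for the rank-zero idempotent too, but the hypothesis is harmless.)

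I expect the main obstacle to be bookkeeping with the distinction, emphasized in the text preceding the lemma, between the $\Z_n$-action $\cdot$ (conjugation-type: $r^k \cdot e = r^k e r^{-k}$) and plain multiplication in $\Rot_n$ — in particular correctly commuting an idempotent past a power of $r$ inside the product $(r^{aj(e)}e)(r^{bj(e)}e)$, since $e r^{bj(e)} \neq r^{bj(e)} e$ in general. The key point that makes everything work is that when $j(e) \mid k$ we have $r^k \cdot e = e$, so $e$ \emph{does} commute with $r^k$ for those specific exponents, and this is precisely why the subgroup is cyclic of order $n/j(e)$ rather than something larger. Everything else is a routine verification that $\phi$ respects the group operations and that $G_e$ has the claimed size, drawing on Lemmas \ref{LemRotn_rk_on_either_side} and \ref{LemRotnIdemsPerDClass} and the orbit-stabilizer observation that $j(e) \mid n$.
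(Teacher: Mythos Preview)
Your approach is essentially the paper's: identify $G_e$ as $\{r^{kj(e)}e:k\in\Z\}$ by writing $\sigma=r^kf$, forcing $f=e$ from the domain condition, and forcing $j(e)\mid k$ from the range condition; then observe this is cyclic of order $n/j(e)$.

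There is, however, a genuine slip in your distinctness argument, and it is tied to your parenthetical claim that the hypothesis $\rk(e)\geq 1$ is unnecessary. You write
\[
r^{mj(e)}e=r^{m'j(e)}e \iff r^{(m-m')j(e)}\cdot e=e \iff n\mid (m-m')j(e).
\]
The first biconditional is false as written: cancelling the invertible $r^{m'j(e)}$ gives $r^{(m-m')j(e)}e=e$ (the \emph{product} in $\Rot_n$), not $r^{(m-m')j(e)}\cdot e=e$ (the action). These are different: for $n=4$ and $e=\{1,3\}$ one has $j(e)=2$ and $r^2\cdot e=e$, yet $r^2e\neq e$. Worse, if you really meant the action, then $r^k\cdot e=e\iff j(e)\mid k$, which is automatic here and would collapse $\phi$ to a constant. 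With the product in place, the step $r^ke=e\iff n\mid k$ requires picking some $x\in\dom(e)$ and observing $r^k(x)=x$ forces $n\mid k$; this is exactly where $\rk(e)\geq 1$ enters, and it is not dispensable. Indeed, for the rank-zero idempotent one has $j(e)=1$ and $G_e$ trivial, so the statement $G_e\cong\Z_{n/j(e)}=\Z_n$ is false for $n>1$.

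The paper makes this explicit: after listing $G_e=\{r^{j}e,r^{2j}e,\ldots,r^{(n/j)j}e\}$, it proves distinctness by choosing $x\in\dom(e)$ and deriving a contradiction from $r^{ji}(x)=r^{jk}(x)$, which is the point at which $\rk(e)\geq 1$ is invoked.
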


\begin{proof}Let $e\in \Rot_n$ be idempotent and let $j=j(e)$. First we show that the maximal subgroup $G_e$ at $e$ is given by 
\[
G_e = \{r^{jk}e : k\in \Z \}.
\]

From Section \ref{SecMatrixAlgsOverGpAlgs} we have $G_e = \{\sigma\in \Rot_n:\dom(\sigma)=\ran(\sigma) = e\}$. 
Let $k\in \Z$ and let $\sigma=r^{jk}e$. Then $\dom(\sigma)=e$, and $\ran(\sigma)=r^{jk}eer^{-jk} = r^{jk}er^{-jk} = r^{jk}\cdot e = (r^j)^k\cdot e = e$. Therefore $r^{jk} e \in G_e$ for all $k\in \Z$. On the other hand, let $\sigma\in G_e$, so $\sigma=r^qf$ for some $q\in \Z$ and some idempotent $f\in \Rot_n$, with $\dom(\sigma)=e$ and $\ran(\sigma)=e$. Since 
$\dom(\sigma)=f$, we have $f=e$, so $\sigma=r^q e$. Then $e=\ran(\sigma)=r^qe(r^q e)^{-1} = r^q e e r^{-q} = r^q e r^{-q} = r^q \cdot e$. That is, $r^q \cdot e = e$. It is straightforward to show that the minimality of $j$ implies that $j$ divides $q$, so we have $\sigma = r^{jk}e$ for some $k\in \Z$. Thus $G_e = \{r^{jk}e : k\in \Z \}$, as claimed.

Now let $\rk(e)\geq 1$. It is clear that $G_e = \{r^je, r^{2j}e, \ldots, r^{\frac{n}{j}j}e = e\}$, and we claim that the elements in this list are distinct. To see why, suppose not. Then $r^{ji}e=r^{jk}e$ for some $1\leq i<k\leq n/j$. Let $x\in \dom(e)$, so applying $r^{ji}e$ and $r^{jk}e$ to $x$ we have $(r^{ji}e)(x)=(r^{jk}e)(x)$, so $r^{ji}(x)=r^{jk}(x)$, so $x=r^{jk-ji}(x)$, but that is absurd because $r^{jk-ji}$ is a nontrivial rotation. It is now clear that $G_e$ is isomorphic to $\Z_{n/j}$.
\end{proof}

The final ingredient we need is a description of the poset structure of $\Rot_n$. For $k\in \N$ let $B_k$ denote the boolean lattice of subsets of $\{1,2,\ldots,k\}.$ First, it is clear that the order ideal $\{\tau\in \Rot_n:\tau\leq \sigma\}$ is isomorphic to the boolean lattice $B_{\rk(\sigma)}$ for any $\sigma\in \Rot_n$. What is nice is that if $\sigma\in \Rot_n$ and there exists $i\in \dom(\sigma)$, then $\sigma(k)$ is determined for all $k\in \dom(\sigma)$. This means that the order filter $\{\tau\in \Rot_n:\tau\geq \sigma\}$ is isomorphic to the boolean lattice $B_{n-\rk(\sigma)}$ for all $\sigma \in \Rot_n$ with $\rk(\sigma)\geq 1$. It follows that $(\Rot_n,\leq)$ is isomorphic to $n$ disjoint copies of $B_n$---one for each element of $\Rot_n$ of rank $n$---identified at their minimal elements.

We now analyze the complexity of the Fourier transform on $\Rot_n$.

\begin{thm}
\label{ThmRotnCplx}
There exists a complete set of inequivalent, irreducible representations of $\C \Rot_n$ such that the Fourier transform and the inverse Fourier transform relative to $\Y$ of an arbitrary element $f\in \C \Rot_n$ can be computed in $O(|\Rot_n|\log |\Rot_n|)$ operations.
\end{thm}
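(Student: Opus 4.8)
The plan is to instantiate Theorems \ref{ThmFFTMainResult} and \ref{ThmMainInv} for $\Rot_n$ using Lemmas \ref{LemRot}--\ref{LemRotnMaxSubgps}, and then to estimate the two resulting terms. By Lemma \ref{LemRot} the $\D$-classes of $\Rot_n$ correspond to the orbits of the subsets of $\{1,\dots,n\}$ under the rotation action $\cdot$: the singleton orbit $\{\emptyset\}$ gives the rank-$0$ class $\{\emptyset\}$, and a class $D$ of positive rank whose idempotents form an orbit of size $j$ has $r_D=j$ idempotents (Lemma \ref{LemRotnIdemsPerDClass}) and maximal subgroup $\Z_{n/j}$ (Lemma \ref{LemRotnMaxSubgps}), hence $|D|=r_D^2|G_D|=j^2(n/j)=jn$. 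I would fix for each $\D$-class a representative idempotent $e$, take $\IRR(G_e)$ to be the characters of $\Z_{n/j(e)}$, and take $p_a=r^m e$ where $r^m\cdot e=a$ (so $\dom(p_a)=e$ and $\ran(p_a)=a$); let $\Y$ be the resulting induced set of representations. Theorems \ref{ThmFFTMainResult} and \ref{ThmMainInv} then bound the forward and inverse Fourier transforms relative to $\Y$ by $\Cplx(\zeta_{\Rot_n})+\sum_D r_D^2\,\T(\IRR(G_D))$ and $\Cplx(\mu_{\Rot_n})+\sum_D r_D^2\,\Tinv(\IRR(G_D))$ respectively, the sums running over all $\D$-classes $D$ of $\Rot_n$.

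For the group-transform sums I would use the Bluestein bound exactly as in the proof of Theorem \ref{ThmFFTCn}: for a class $D$ of positive rank, $G_D\cong\Z_{n/j}$ with $j=r_D$, so $\T(\IRR(G_D))\le c(n/j)\log(n/j)\le c(n/j)\log n$, and likewise for $\Tinv$, whence $r_D^2\,\T(\IRR(G_D))\le j^2\cdot c(n/j)\log n=c\,|D|\log n$ (the rank-$0$ class contributes nothing). Summing over all $\D$-classes gives $O(|\Rot_n|\log n)$. Since the preceding discussion shows $(\Rot_n,\le)$ is $n$ copies of $B_n$ identified at $\emptyset$, we have $|\Rot_n|=n(2^n-1)+1$, so $n=O(\log|\Rot_n|)$ and this sum is $O(|\Rot_n|\log\log|\Rot_n|)=O(|\Rot_n|\log|\Rot_n|)$.

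The only delicate term is $\Cplx(\zeta_{\Rot_n})$ (and similarly $\Cplx(\mu_{\Rot_n})$): applying Theorem \ref{ThmBjork} directly to $\Rot_n$ is too weak, since $L(\Rot_n)$ has $n^2$ join-irreducibles (the rank-$1$ elements), giving only $O(|\Rot_n|n^2)=O(|\Rot_n|\log^2|\Rot_n|)$. Instead I would split the computation along the gluing: if $\rk(s)\ge1$ then $s$, and every $t\ge s$, is a restriction of one common rotation (a rank-$\ge1$ partial rotation determines the full rotation), so all such $t$ lie in a single copy of $B_n$ and $f_\zeta(s)$, $f_\mu(s)$ coincide with the zeta and Möbius transforms of $f$ computed within that copy. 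Each copy is a boolean lattice with $n$ join-irreducibles, so by Theorem \ref{ThmBjork} its zeta and Möbius transforms each cost $O(2^n n)$ operations, and running over the $n$ copies costs $O(n^2 2^n)=O(|\Rot_n|\log|\Rot_n|)$. The remaining values at the shared minimum are computed by hand in $O(|\Rot_n|)$ operations, using $f_\zeta(\emptyset)=\sum_t f(t)$ and, since $[\emptyset,t]\cong B_{\rk(t)}$ so that $\mu(\emptyset,t)=(-1)^{\rk(t)}$, $f_\mu(\emptyset)=\sum_t(-1)^{\rk(t)}f(t)$. Hence $\Cplx(\zeta_{\Rot_n}),\Cplx(\mu_{\Rot_n})=O(|\Rot_n|\log|\Rot_n|)$, and combining with the previous paragraph yields the theorem. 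The main obstacle is precisely this last point: the generic Björklund bound loses a logarithmic factor on $\Rot_n$, and one must exploit the decomposition of $(\Rot_n,\le)$ into boolean-lattice pieces (treating their common bottom separately) to recover the single $\log$.
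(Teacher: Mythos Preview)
Your proposal is correct and follows essentially the same route as the paper: instantiate Theorems \ref{ThmFFTMainResult} and \ref{ThmMainInv} via Lemmas \ref{LemRot}--\ref{LemRotnMaxSubgps}, bound the cyclic-group transforms by Bluestein to get $O(|\Rot_n|\log\log|\Rot_n|)$, observe that Theorem \ref{ThmBjork} applied to all of $\Rot_n$ loses a logarithm, and recover the single $\log$ by decomposing $(\Rot_n,\le)$ into $n$ copies of $B_n$ glued at the bottom and treating the rank-$0$ element separately. The only cosmetic differences are that the paper cites the explicit $n2^n$ bound for the boolean zeta and M\"obius transforms rather than invoking Theorem \ref{ThmBjork} on each $B_n$, and it computes $f_\zeta(\emptyset)$ and $f_\mu(\emptyset)$ by combining the $n$ per-copy values (via $(1-n)f(\emptyset)+\sum_i f_{\zeta,i}(\emptyset)$) instead of summing over all of $\Rot_n$; both choices give the same bound.
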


\begin{proof}
First, we note that the poset description of $\Rot_n$ above implies that $|\Rot_n| = 2^n n - n + 1$, so $n = O(\log |\Rot_n|)$.

Next, since the elements of any $\D$-class of $\Rot_n$ are all of the same rank, for $k=0 ,\ldots, n$, let $d(k)$ denote the number of $\D$-classes of $\Rot_n$ consisting of rank-$k$ elements, and label the $\D$-classes consisting of rank-$k$ elements $D_{k,1}, D_{k,2}, \ldots, D_{k,d(k)}$. Choose an idempotent $e_{k,l}$ for each $\D$-class $D_{k,l}$, and let $j(k,l)=j(e_{k,l})$. 
Then, by Lemma \ref{LemRotnIdemsPerDClass}, $D_{(k,l)}$ has $j(k,l)$ idempotents and, by Lemma \ref{LemRotnMaxSubgps}, for $k>0$ the maximal subgroup at $e_{k,l}$ is isomorphic to $\Z_{n/j(k,l)}$. 
Let $\IRR(\Z_{n/j(k,l)})$ be a complete set of inequivalent, irreducible representations of $\C Z_{n/j(k,l)}$. 

For any idempotent $a\in D_{(k,l)}$, let $p_a = r^m e_{k,l} $, where $m$ is the smallest nonnegative integer such that $r^m \cdot e_{k,l} = a$. 
Let $\Y$ be the set of representations of $\C \Rot_n$ induced by the $\IRR(\Z_{n/j(k,l)})$.

Theorems \ref{ThmFFTMainResult} and \ref{ThmMainInv} imply that the Fourier transform and inverse Fourier transform relative to $\Y$ of an arbitrary element $f\in \C \Rot_n$ can be computed in
\begin{equation}
\label{RotnEqnFTCplx}
\Cplx(\zeta_{\Rot_n}) + \sum_{k=1}^n\sum_{l=1}^k {j(k,l)}^2 \T(\IRR(\Z_{n/j(k,l)}))
\end{equation}
and
\begin{equation}
\label{RotnEqnIFTCplx}
\Cplx(\mu_{\Rot_n}) + \sum_{k=1}^n\sum_{l=1}^k {j(k,l)}^2 \Tinv(\IRR(\Z_{n/j(k,l)}))
\end{equation}
operations, respectively. Let $c$ be a constant such that $\T(\IRR(\Z_k))\leq c k \log k$ and $\Tinv(\IRR(\Z_k))\leq c k \log k$ for all $k$. Then we can bound the sum in \eqref{RotnEqnFTCplx} by
\begin{align*}
\sum_{k=1}^n\sum_{l=1}^k {j(k,l)}^2 \T(\IRR(\Z_{n/j(k,l)}))  
&\leq \sum_{k=1}^n\sum_{l=1}^k {j(k,l)}^2 c \frac{n}{j(k,l)} \log\left( \frac{n}{j(k,l)} \right) \\
& = cn \sum_{k=1}^n\sum_{l=1}^k {j(k,l)} \log\left( \frac{n}{j(k,l)} \right) \\
& \leq  cn\log(n) \sum_{k=1}^n\sum_{l=1}^k {j(k,l)} \\
& = cn\log(n) \sum_{k=1}^n \binom{n}{k} \\
& = cn\log(n) (2^n - 1) \\
& \leq c\log(n) (2^n n - n +1) \\
& = O(|\Rot_n|\log \log |\Rot_n|).
\end{align*}
Similarly, in \eqref{RotnEqnIFTCplx} we have
\[
\sum_{k=1}^n\sum_{l=1}^k {j(k,l)}^2 \Tinv(\IRR(\Z_{n/j(k,l)})) = O(|\Rot_n|\log \log |\Rot_n|).
\]

Although it is possible to use Theorem \ref{ThmBjork} to show $\Cplx(\zeta_{\Rot_n}) = O(n^2 |\Rot_n|) = O(|\Rot_n| \log^2 |\Rot_n|)$ (and similarly for $\Cplx(\mu_{\Rot_n})$), 
the following more direct approach yields a better result: $(\Rot_n,\leq)$ is isomorphic to $n$ disjoint copies of the boolean lattice $B_n$ identified at their minimal elements. Fast zeta and M\"obius transforms on $B_n$ are simple to describe and implement---see, e.g., Section 2.2 of \cite{FourierMeetsMobius}. In particular, the zeta or M\"obius transform of an arbitrary $\C$-valued function on $B_n$ can be computed in no more than $n 2^n$ operations. 

Suppose $f:\Rot_n\rightarrow \C$, and for $i\in\{0 ,\ldots, n-1\}$, let $\iota_i(\Rot_n)$ denote $\{\sigma\in \Rot_n: \sigma\leq r^i\}$. We may compute the zeta transform $f_\zeta$ of $f$ in the following manner: First compute the zeta transform of $f$ restricted to each of the $\iota_i(\Rot_n)$, and call the results $f_{\zeta,i}$. Then, for $\sigma\in \Rot_n$, if $\rk(\sigma)\geq 1$, we have $f_\zeta(\sigma) = f_{\zeta,i}(\sigma)$, where $i\in \{0 ,\ldots, n-1\}$ is the unique value $i$ for which $\sigma\leq r^i$. For the element $\sigma\in \Rot_n$ of rank $0$, we have $f_\zeta(\sigma) = (1-n)f(\sigma) + \sum_{i=0}^{n-1} f_{\zeta,i}(\sigma)$. Using fast zeta transforms for the $\iota_i(\Rot_n)$, we have
\begin{align*}
\Cplx(\zeta_{\Rot_n}) &\leq n (n2^n) +  n+1\\
&= n(n2^n - n +1) +n^2+1\\
&=O(n|\Rot_n|)\\
&=O(|\Rot_n|\log|\Rot_n|).
\end{align*}

In a similar fashion, we may compute the M\"obius transform $f_u$ of $f$ in the following manner: First compute the M\"obius transform of $f$ restricted to each of the $\iota_i(\Rot_n)$ and call the results $f_{\mu,i}$. For $\sigma \in \Rot_n$, if $\rk(\sigma)\geq 1$, we have $f_\mu(\sigma)=f_{\mu,i}(\sigma)$, where $i\in \{0 ,\ldots, n-1\}$ is the unique value for which $\sigma \leq r^i$. For the element $\sigma\in \Rot_n$ of rank $0$, we have $f_\mu(\sigma)=(1-n)f(\sigma) + \sum_{i=0}^{n-1}f_{\mu,i}(\sigma)$. Using fast M\"obius transforms for the $\iota_i(\Rot_n)$, we have
\[
\Cplx(\mu_{\Rot_n}) \leq n (n2^n) +  n+1
=O(|\Rot_n|\log|\Rot_n|).
\]
Therefore \eqref{RotnEqnFTCplx} and \eqref{RotnEqnIFTCplx} are both $O(|\Rot_n|\log|\Rot_n|)$.
\end{proof}

\begin{rmk}The changes from the groupoid basis to the Fourier basis for $S = C_n$ and $S = \Rot_n$ in the proofs of Theorems \ref{ThmFFTCn} and \ref{ThmRotnCplx} are accomplished in $O(|S|\log \log |S|)$ operations. If $S$ is a group and all multiplications by constants involved in the computation of the Fourier transform are restricted to multiplications by constants of size no larger than $2$, then it is known that the Fourier transform on $\C S$ requires {\em at least} $\frac{1}{4}|S|\log |S|$ operations \cite{ClausenInversion}. Although our Fourier transforms for $S=C_n$ and $S=\Rot_n$ use $O(|S|\log |S|)$ operations (due to the complexities of the changes of basis from the natural basis to the groupoid basis), $C_n$ and $\Rot_n$ are the first interesting examples of families of inverse semigroups with nontrivial maximal subgroups whose changes of basis from the groupoid basis to the Fourier basis can be achieved in sub-$O(|S|\log|S|)$ complexity.
\end{rmk}

\begin{rmk}
\label{LinearCplx}
Simple examples exist which show that the general $\frac{1}{4}|S|\log |S|$ lower bound on the complexity of the Fourier transform for groups does not extend to inverse semigroups. For example, if $S$ is the chain on $n$ elements under the meet operation, then $S$ is an idempotent inverse semigroup of order $n$, so each $\D$-class of $S$ has size one and the maximal subgroup at each element of $S$ is trivial. Therefore, the Fourier transform of an element $f\in \C S$ is just the zeta transform of $f$, and it is easy to see that the zeta transform of $f\in \C S$ can be computed in linear time. Indeed, let $S=\{s_1 < s_2 < \cdots < s_n\}$ and $f:S\rightarrow \C$. Then set $f_\zeta(s_n) = s_n$ and, for $i=n-1 ,\ldots, 1$, compute $f_\zeta(s_i) = f(s_i) + f_\zeta(s_{i+1})$. Thus we can compute the Fourier transform of $f$ in $n$ operations. The M\"obius transform $f_\mu$ of $f:S\rightarrow \C$ is even simpler. We have $f_\mu(s_n)=f(s_n)$ and, for $i=1,2,\ldots,n-1$, $f_\mu(s_i) = f(s_i)-f(s_{i+1})$, so the inverse Fourier transform of $f$ can also be computed in $n$ operations.
\end{rmk}


\bibliographystyle{plain}
\bibliography{InversionInvSemibib}

\begin{thebibliography}{10}

\bibitem{Baum}
U.~Baum.
\newblock Existence and efficient construction of fast {F}ourier transforms for
  supersolvable groups.
\newblock {\em Comput. Complex.}, 1(3):235--256, 1991.

\bibitem{ClausenInversion}
U.~Baum and M.~Clausen.
\newblock Some lower and upper complexity bounds for generalized {F}ourier
  transforms and their inverses.
\newblock {\em SIAM J. Comput.}, 20:451--459, 1991.

\bibitem{FourierMeetsMobius}
A.~Bj\"{o}rklund, T.~Husfeldt, P.~Kaski, and M.~Koivisto.
\newblock Fourier meets {M}\"{o}bius: fast subset convolution.
\newblock In {\em Proc. 39th ACM Symposium on Theory of Computing}, STOC '07,
  pages 67--74, 2007.

\bibitem{Bjork}
A.~Bj\"{o}rklund, T.~Husfeldt, P.~Kaski, M.~Koivisto, J.~Nederlof, and
  P.~Parviainen.
\newblock Fast zeta transforms for lattices with few irreducibles.
\newblock In {\em Proc. 23rd ACM-SIAM SODA}, pages 1436--1444, 2012.

\bibitem{Bluestein}
L.~I. Bluestein.
\newblock A linear filtering approach to the computation of the discrete
  {F}ourier transform.
\newblock {\em {I}{E}{E}{E} Trans. Electroacoustics}, 18(4):451--455, 1970.

\bibitem{Brigham}
E.~O. Brigham.
\newblock {\em The fast {F}ourier transform and its applications}.
\newblock Prentice Hall, Englewood Cliffs, NJ, 1988.

\bibitem{Clausen}
M.~Clausen and U.~Baum.
\newblock Fast {F}ourier transforms for symmetric groups: Theory and
  implementation.
\newblock {\em Math. Comput.}, 61(204):833--847, 1993.

\bibitem{CliffPres}
A.~H. Clifford and G.~B. Preston.
\newblock {\em The {A}lgebraic {T}heory of {S}emigroups. {V}ol. 1}.
\newblock Mathematical {S}urveys {N}o. 7, {AMS}, {P}rovidence, {RI}, 1961.

\bibitem{CooleyTukey}
J.~W. Cooley and J.~W. Tukey.
\newblock An algorithm for machine calculation of complex {F}ourier series.
\newblock {\em Math. Comput.}, 19:297--301, 1965.

\bibitem{PersiBook}
P.~Diaconis.
\newblock {\em Group Representations in Probability and Statistics}, volume~11
  of {\em Lecture Notes---Monograph Series}.
\newblock Institute of Mathematical Statistics, 1988.

\bibitem{Persi}
P.~Diaconis.
\newblock A generalization of spectral analysis with application to ranked
  data.
\newblock {\em Ann. Statist.}, 17(3):949--979, 1989.

\bibitem{PlanarRook}
D.~Flath, T.~Halverson, and K.~Herbig.
\newblock The planar rook algebra and {P}ascal's triangle.
\newblock {\em L'Enseignement Mathematique}, 55(2):77--92, 2009.

\bibitem{Green}
J.~A. Green.
\newblock On the structure of semigroups.
\newblock {\em Ann. of Math.}, 54:163--172, 1951.

\bibitem{Lawson}
M.~V. Lawson.
\newblock {\em Inverse {S}emigroups: {T}he {T}heory of {P}artial {S}ymmetries}.
\newblock World Scientific, Singapore, 1998.

\bibitem{RookFFT}
M.~Malandro and D.~Rockmore.
\newblock Fast {F}ourier transforms for the rook monoid.
\newblock {\em Trans. Amer. Math. Soc.}, 362(2):1009--1045, 2010.

\bibitem{InvSemiFFT}
M.~E. Malandro.
\newblock Fast {F}ourier transforms for finite inverse semigroups.
\newblock {\em J. Algebra}, 324(2):282--312, 2010.

\bibitem{RookPRD}
M.~E. Malandro.
\newblock Inverse semigroup spectral analysis for partially ranked data.
\newblock {\em Appl. Comput. Harmon. Anal.}, 35(1):16--38, 2013.

\bibitem{Maslen}
D.~K. Maslen.
\newblock The efficient computation of {F}ourier transforms on the symmetric
  group.
\newblock {\em Math. Comput.}, 67(223):1121--1147, 1998.

\bibitem{DanDiameters}
D.~K. Maslen and D.~N. Rockmore.
\newblock Adapted diameters and {FFT}s on groups.
\newblock In {\em Proc. 6th ACM-SIAM SODA}, pages 253--262, 1995.

\bibitem{Munn1}
W.~D. Munn.
\newblock On semigroup algebras.
\newblock {\em Proc. Cambridge Philos. Soc.}, 51:1--15, 1955.

\bibitem{Munn3}
W.~D. Munn.
\newblock The characters of the symmetric inverse semigroup.
\newblock {\em Proc. Cambridge Philos. Soc.}, 53:13--18, 1957.

\bibitem{Munn}
W.~D. Munn.
\newblock Matrix representations of semigroups.
\newblock {\em Proc. Cambridge Philos. Soc.}, 53:5--12, 1957.

\bibitem{DanWreath}
D.~N. Rockmore.
\newblock Fast {F}ourier transforms for wreath products.
\newblock {\em Appl. Comput. Harmon. Anal.}, 2:279--292, 1995.

\bibitem{DanApps}
D.~N. Rockmore.
\newblock Some applications of generalized {FFT}s.
\newblock In {\em Proceedings of DIMACS Workshop in Groups and Computation},
  pages 329--369, 1997.

\bibitem{Serre}
J.~P. Serre.
\newblock {\em Linear {R}epresentations of {F}inite {G}roups}, volume~42 of
  {\em Graduate Texts in Mathematics}.
\newblock Springer-Verlag, New York-Heidelberg, 1977.

\bibitem{Solomon}
L.~Solomon.
\newblock Representations of the rook monoid.
\newblock {\em J. Algebra}, 256:309--342, 2002.

\bibitem{Stanley}
R.~Stanley.
\newblock {\em Enumerative {C}ombinatorics. {V}ol. 1}, volume~49 of {\em
  Cambridge Studies in Advanced Mathematics}.
\newblock Cambridge University Press, 1997.

\bibitem{Steinberg2}
B.~Steinberg.
\newblock M\"{o}bius functions and semigroup representation theory {II}:
  Character formulas and multiplicities.
\newblock {\em Adv. Math.}, 217:1521--1557, 2008.

\bibitem{Yates}
F.~Yates.
\newblock The design and analysis of factorial experiments.
\newblock {\em Imp. Bur. Soil Sci. Tech. Comm.}, 35, 1937.

\end{thebibliography}

\end{document}